\newtheorem*{Fait}{Fait}
\def\N{{\mathbb N}}
\def\R{{\mathbb R}}
\def\C{{\mathbb C}}
\def\Q{{\mathbb Q}}
\def\A{{\mathbb A}}
\def\tr{{\text{tr}}}
\def\W{{\mathbb W}}
\def\i{\iota}
\def\vhi{{\varphi}}
\def\bk{{\backslash}}
\def\om{{\omega}}
\author{Mathieu Cossutta}
\address{D\'epartement de math\'ematiques et applications\\45, rue d'Ulm \\ F 75230 Paris cedex 05}
\email{Mathieu.cossutta@ens.fr}
\title{Asymptotique des nombres de Betti des vari{\'e}t{\'e}s arithm{\'e}tiques}
\begin{document}
\frontmatter
\begin{abstract}
Nous \'etudions la question de la croissance des nombres de Betti de
certaines vari\'et\'es arithm\'etiques dans des rev\^etements de congruence. Plus
pr\'ecisement nos r\'esultats portent sur les vari\'et\'es de Siegel et les
vari\'et\'es associ\'ees \`a des groupes orthogonaux. Noux expliquons comment un
th\'eor\`eme de Waldspurger permet de majorer et de minorer ces nombres. Les
r\'esultats obtenus vont dans le sens de conjectures de Sarnak et Xue \cite{XS}.
\end{abstract}

\begin{altabstract}
We study the question of the growth of Betti numbers of certain arithmetic
varieties in tower of congruence coverings. In fact, our results are about
Siegel varieties and varieties associated to orthogonal groups. We explain
how a theorem of Waldspurger can be used to obtain lower and upper
bound. Our results are in the direction of conjectures made by Sarnak and
Xue \cite{XS}.
\end{altabstract}

\subjclass{11F27,11F37,11F70,11F75}

\maketitle

\section{Introduction}
Soit $G^0$ un groupe alg\'ebrique connexe semi-simple sur $\Q$. Soit
\'egalement $K_\infty\subset G^0(\R)$ un compact maximal,
$X=G^0(\R)/K_\infty$ est un espace sym\'etrique. Soit $\Gamma$ un
sous-groupe arithm\'etique de $G^0(\Q)$. On construit ainsi une vari\'et\'e
arithm\'etique $S(\Gamma)=\Gamma\bk X$. Soit $\Gamma(n)$ la suite des sous-groupes de congruence de $\Gamma$, nous sommes int\'eress\'es par le comportement asymptotique des nombres de Betti $L^2$ des vari\'et\'es $S(\Gamma(n))$. Plus pr\'ecisement nous cherchons des nombres $\alpha_{i}$ et $\beta_{i}$ tels que pour tout $\epsilon>0$:
\begin{eqnarray}\label{FormS}
\text{vol}(S(\Gamma(n)))^{\alpha_{i}-\epsilon}\ll_\epsilon\dim H^i(S(\Gamma(n)),\C)\ll_\epsilon\text{vol}(S(\Gamma(n)))^{\beta_i+\epsilon}
\end{eqnarray}
la notation $A_n\ll_\epsilon B_n$ veut dire qu'il existe une constante
$C>0$ ne d\'ependant que de $\epsilon$ telle que $A_n\leq CB_n$ pour tout $n$. Pr\'ecisons les groupes que nous allons \'etudier. Soit $F$ un corps de nombres totalement r{\'e}el. Soit $(E,\overline{\bullet})$ une
extension de corps de $F$ avec une involution de l'un des types suivants:
\[
E=\left\{
\begin{array}{ll}
F& \text{ cas 1}\\
\text{une extension quadratique }\text{ de }F&\text{ cas 2,}
\\
\end{array}\right.
\]
\[
\overline{\bullet}=\left\{
\begin{array}{ll}
\text{id}& \text{ cas 1}\\
\text{l'involution de Galois de }E&\text{ cas 2.}
\end{array}\right.
\]
Soit $\eta\in\{-1,1\}$. Soit $V$ un $E$-espace vectoriel de dimension finie muni d'une forme
sesquilin{\'e}aire $(,)$ $\eta$-hermitienne. Nous faisons
l'hypoth\`ese suivante, dans le cas
$(1)$ nous supposerons que la dimension de $V$ est paire si
$\eta=1$. Soit $G=U(V)$ (resp. $G^0=SU(V)$) le groupe des
isom{\'e}tries de $V$ (resp. de d\'eterminant $1$), on peut voir ces
groupes comme \'etant d\'efinis sur $\Q$ par r\'estriction des
scalaires. On se donne $L$ un r\'eseau entier maximal de $V$. Alors pour
tout id\'eal $\mathfrak{c}$ de $E$ le sous-groupe de congruence
$\Gamma(\mathfrak{c})$ est bien d\'efini. Fixons un tel id\'eal
$\mathfrak{c}$ et $\mathfrak{P}$ un id\'eal premier inerte et non ramifi\'e. Notre r\'esultat
principal est le th\'eor\`eme \ref{ppp}. Par exemple, la cohomologie
holomorphe fortement primitive des vari\'et\'es de Siegel
de rang $n$ n'appara\^it qu'en les degr\'es $p(2n-p)$ avec $p\leq n$, on
obtient alors:

\begin{theo} Si $p<\frac{n-1}{2}$:
\begin{multline}\label{resultatintro}
\text{vol}(S(\Gamma(\mathfrak{c}\mathfrak{P}^k)))^{\frac{p}{n+\frac{1}{2}}(1-\frac{1}{2n})-\epsilon}\ll_\epsilon\\
\dim H^{p(2n-p),0}_2(S(\Gamma(\mathfrak{c}\mathfrak{P}^k),\C)\ll_\epsilon\\
\text{vol}(S(\Gamma(\mathfrak{c}\mathfrak{P}^k)))^{\frac{p}{n+\frac{1}{2}}(1+\frac{p-\frac{1}{2}}{n})+\epsilon}
\end{multline}
\end{theo}
La formule de Matsushima montre que:
\begin{equation}\label{FormM}
 H^i(\Gamma(\mathfrak{c}\mathfrak{P}^k)\bk X)=\oplus_A m\left(A,\Gamma(\mathfrak{c}\mathfrak{P}^k)\right)H^i(\text{Lie}(G^0),K_\infty,A)
\end{equation}
o\`u $A$ d\'ecrit les repr\'esentations unitaires de $G^0(\R)$ et $m(A,\Gamma(\mathfrak{c}\mathfrak{P}^k))$ est
la multiplicit\'e de $A$ dans $L^2(\Gamma(\mathfrak{c}\mathfrak{P}^k\bk G^0)$. Les repr\'esentations v\'erifiant
$$H^*(\text{Lie}(G^0),K_\infty,A)\neq 0$$
sont appel\'ees les repr\'esentations cohomologiques, elles ont \'et\'e d\'etermin\'ees
dans \cite{VZ}. Ces repr\'esentations sont construites \`a partir d'une
alg\`ebre parabolique $\theta$-stable $\mathfrak{q}$ de
$\text{Lie}(G^0)$. On note $A_\mathfrak{q}$ la repr\'esentation associ\'ee
\`a une alg\`ebre $\mathfrak{q}$. On note $R(\mathfrak{q})$ le plus petit
degr\'e tel que $H^i(\text{Lie}(G^0),K_\infty,A_\mathfrak{q})$ est non nul, on a:
$$\dim H^{R(\mathfrak{q})}(\text{Lie}(G^0),K_\infty,A)=1.$$
 Soit $p(\mathfrak{q})$ la limite
inf\'erieure sur les r\'eels $p\geq 2$ tels que les coefficients $K_\infty$-finis de
$A_\mathfrak{q}$ soient dans $L^p(G^0)$. Posons
$d(\mathfrak{q})=\frac{2}{p(\mathfrak{q})}$. Dans \cite{XS} les auteurs
conjecturent que pour tout $\epsilon>0$:
\begin{equation}
m\left(A_\mathfrak{q},\Gamma(\mathfrak{c}\mathfrak{P}^k)\right)\ll_\epsilon [\Gamma:\Gamma(\mathfrak{c}\mathfrak{P}^k)]^{d(\mathfrak{q})+\epsilon}
\end{equation}
g\'en\'eralisant ainsi sous forme conjecturale le th\'eor\`eme de \cite{DGW}. La formule (\ref{FormM}) combin\'ee \`a cette conjecture donne donc une pr\'ediction sur le nombre $\beta_{i}$ souhait\'e dans la formule (\ref{FormS}). Donnons trois exemples supposons que la partie non compacte de $G^0(\R)$ soit:
\begin{itemize}
\item $SU(p,q)$ avec $p>q$, alors pour la cohomologie holomorphe de degr\'e $q$, on trouve $\beta_{q}=\frac{1}{p+q-1}$.
\item $SO(n,1)$, alors pour le $i^\text{i\`eme}$ nombre de Betti, on trouve $\beta_i=\frac{2i}{n-1}$. 
\item $\text{Sp}_{2n}$ alors pour la cohomologie holomorphe $L^2$ de
degr\'e $p(2n-p)$, on trouve $\beta_{p(2n-p)}=\frac{p}{n}.$
\end{itemize}
Signalons que dans \cite{Li4}, l'auteur donne des formules exactes dans le
cas $p=2$. On remarque que
les termes d'erreur par rapport \`a l'exposant $\frac{p}{2n-1}$ dans la
formule (\ref{resultatintro}) tendent vers zero quand $n\gg p$. En
g\'en\'eral, on donne seulement une majoration de
la partie $\theta$ de la cohomologie. Nous allons maintenant d\'efinir
cette partie et expliquer le sh\'ema de notre preuve. Notre m\'ethode est
ad\'elique, dans une premi\`ere partie nous rappelons quelques faits les vari\'et\'es arithm\'etiques dans
ce cadre. Soit $K_f$ un sous-groupe compact ouvert des ad\`eles finies de $G$. On introduit la vari\'et\'e arithm\'etique:
$$S(K_f)=G(\Q)\bk G(\A)/(K\times K_f).$$
La formule de Matsushima ad\'elique (on suppose que
$K_f=\prod_{v\nmid\infty}K_v$) est la suivante:
\begin{eqnarray*}
\dim H^R_\mathfrak{q}(S(K_f),\C)
=\oplus_{A=\otimes A_v}\prod_{v\nmid\infty}\dim A_v^{K_v}.\\
\end{eqnarray*}
La somme portant sur les repr\'esentations automorphes $A\subset
L^2(G(F)\bk G(\A))$ de composante archim\'edienne
${A}_\mathfrak{q}$. Dans \cite{Li1} l'auteur associe \`a toute alg\`ebre
 $\theta$-stable $\mathfrak{q}$ dont le Levi n'a qu'un seul
facteur non compact, un couple $(W_\mathfrak{q},\pi_\mathfrak{q})$ form\'e
d'un espace $-\eta$-hermitien $W_\mathfrak{q}$ sur $E\otimes_F\R$ et d'une
s\'erie discr\`ete de $U(W_\mathfrak{q})$. Le couple v\'erifiant la propri\'et\'e
suivante:
$$\Theta_\chi(\pi_\mathfrak{q})=A_\mathfrak{q}$$
$\Theta_\chi$ d\'esigne la correspondance theta entre la paire de groupes
$U(W)\times U(V)$.
Soit $W$ un espace hermitien d\'efini sur $E$ de signature $W_\mathfrak{q}$
alors on peut d\'efinir:
\begin{eqnarray}\label{FormP1}
H_{\mathfrak{q},W}^R(S(K_f),\C)&=&\oplus_{\pi,\alpha}\dim(\alpha\otimes\Theta(\pi_f))^{K_f}\notag\\
H_{\theta}^R(S(K_f),\C)&=&\oplus_{W,\alpha}\oplus_{\pi}\dim(\alpha\otimes\Theta(\pi_f))^{K_f}
\end{eqnarray}
La somme \'etant ind\'ex\'ee par les repr\'esentations automorphes cuspidales de
$U(W)$ de type $\mathfrak{\pi}_\mathfrak{q}$ \`a l'infini et les caract\`eres
automorphes $\alpha$ de $U(1)$ de composante archim\'edienne nulle. Nous
expliquons ces formules dans la partie \ref{partie2} et \ref{SWF}. Dans la partie \ref{pararam} on d\'emontre que pour
la suite $K(\mathfrak{c}\mathfrak{P}^k)$, les espaces $W$ apparaissant dans
la somme (\ref{FormP1}) sont en nombre finis. Il suffit donc de savoir
borner pour toute place finie $v$ de $F$:
$$\dim\Theta(\pi_v)^{K_v}.$$
Les parties \ref{partie3} et \ref{min} sont consacr\'ees \`a la d\'emonstration des th\'eor\`emes
\ref{maj} et \ref{thmmaj}. Il s'agit de d\'emontrer que si $\mathfrak{P}|p$
est inerte et impaire:
\begin{equation}\label{bornation}
\dim \pi_p^{K(\mathfrak{P}^k)}N\mathfrak{P}^{\frac{k}{2}(n-2m)}\leq
\dim\Theta(\pi_p)^{K(\mathfrak{P}^k)}\leq \dim \pi_p^{K(\mathfrak{P}^{k+1})}N\mathfrak{P}^{\frac{k+1}{2}nm}
\end{equation}
o\`u $n=\dim V$ et $m=\dim W$. La preuve de la majoration est bas\'ee sur des r\'esultats de
Waldspurger (\cite{Wald0} et \cite{Wald}). La preuve de la minoration sur
une version locale de la formule du produit scalaire de Rallis. La partie
\ref{finitude2} d\'emontre une majoration pour les places divisant $2$. Globalement
on conclut en faisant le produit sur les places finies des in\'egalit\'es
(\ref{bornation}) et en utilisant le th\'eor\`eme de \cite{Clo} sur la
multiplicit\'e des s\'eries discr\`etes dans le spectre cuspidal. Ces
r\'esultats peuvent se g\'en\'eraliser \`a de nombreuses autres repr\'esentations
cohomologiques. Nous esp\'erons pouvoir y revenir par la suite.

Mes plus sinc\`eres remerciements vont \`a mon directeur de th\`ese Nicolas
Bergeron. Ces id\'ees ont beaucoup influenc\'e ce
travail. Je remercie \'egalement Jean-Loup Waldspurger pour m'avoir
expliqu\'e avec beaucoup de gentillesse la preuve de la partie
\ref{finitude2}. Enfin je voudrais remercier Benjamin Schraen pour de nombreuses relectures.

\section{Vari\'et\'e arithm\'etique.}\label{partie1}

\subsection{D\'efinition}

Soit $F$ un corps de nombre totalement r{\'e}el. On note
$F_\infty=F\otimes_\Q\R\simeq\R^{[F:\Q]}$. On note $P$ l'ensemble des places de $F$ et pour $v\in P$, $F_v$ la compl{\'e}tion de
$F$ par rapport {\`a} $v$. On note $F_\infty=\prod_{v|\infty}F_v$. On note
$\A$ les ad{\`e}les de $F$ et $\A^f$ les ad{\`e}les finis de $F$. Plus
g{\'e}n{\'e}ralement si $T$ est un ensemble fini de places, on note
$\A_T=\prod_{v\in T}F_v$ et $\A^T$ le produit restreint des $F_v$ pour
$v\not\in T$. On a $\A=\A_T\times \A^T$, en
particulier $\A=F_\infty\times \A^f$. Rappelons que l'on a fix\'e dans
l'introduction un couple $(E,\overline{\bullet})$ form\'e d'une
extension de corps de $F$ et d'une involution de l'un des types suivants:
\[
E=\left\{
\begin{array}{ll}
F& \text{ cas 1}\\
\text{une extension quadratique }\text{ de }F&\text{ cas 2,}
\\
\end{array}\right.
\]
\[
\overline{\bullet}=\left\{
\begin{array}{ll}
\text{id}& \text{ cas 1}\\
\text{l'involution de Galois de }E&\text{ cas 2.}
\end{array}\right.
\]
Dans le cas $(2)$, on note $\epsilon_{E/F}$ le caract{\`e}re de
$\A^\times_F/F^\times$ associ{\'e} {\`a} l'extension quadratique $E/F$ par la th{\'e}orie du
corps de classe (on peut le d{\'e}crire explicitement au moyen du
discriminant et du symbole de Hilbert de $E$). Soit $\eta\in\{-1,1\}$. Soit $V$ un $E$-espace vectoriel de dimension finie muni d'une forme
sesquilin{\'e}aire $(,)$ $\eta$-hermitienne. Nous faisons
l'hypoth\`ese suivante, dans le cas
$(1)$ nous supposerons que la dimension de $V$ est paire si
$\eta=1$, dans le cas $(2)$ on peut supposer sans
perte de g\'en\'eralit\'e que $\eta=1$, nous le faisons. On note $n=\dim_E V$ et $r$ la dimension d'un sous-espace
isotrope maximal d\'efini sur $E$ de $V$. Soit $G=U(V)$ le groupe des
isom{\'e}tries de $V$. Le groupe $G$ est de
rang d\'eploy\'e $r$. Soit $K_\infty$ un sous-groupe compact maximal de
$G(F_\infty)$. Soit $K_f$ un sous-groupe compact ouvert de
$G(\mathbb{A}^f)$. Nous sommes int{\'e}ress{\'e}s par le comportement asymptotique des nombres
de Betti $L^2$ de la vari{\'e}t{\'e} arithm{\'e}tique suivante:
$$\text{S}_{K_f}(V)=G(F)\bk G(\A)/(K_\infty\times K_f)$$
quand le volume de $K_f$ tend vers $0$. Nous supposerons qu'il existe une
unique place archim\'edienne $v_\infty$ telle que $G(F_{v_\infty})$ soit
non compact, on a alors trois familles possibles pour
$G(F_\infty)^\text{nc}=G(F_{v_\infty})$:
\begin{itemize}
\item Dans le cas 1 si $\eta=1$, $O(p,q)$ avec $pq\neq 0$ et $p+q=n$
\item Dans le cas 1 si $\eta=-1$, $\text{Sp}_{2k}$ avec $2k=n$ (on a alors
n\'ecessairement $F=\Q$)
\item Dans le cas 2, $U(p,q)$ avec $pq\neq 0$ et $p+q=n$.
\end{itemize} 
\subsection{Composantes connexes}\label{composante}
En g\'en\'eral, par manque d'approximation forte les vari\'et\'es pr\'ec\'edentes ne
sont pas connexes. Une des composantes connexes est toujours donn\'ee par:
$$G(F)\cap K_f\bk X_G.$$
L'ensemble des composantes connexes est d\'ecrit par les doubles
classes suivantes:
$$\text{Cl}(G,K_f)=G(F)\bk G(\mathbb{A}^f)/K_f.$$
Plus pr\'ecisement si $(\gamma_i)$ est un syst\`eme de repr\'esentant de $\text{Cl}(G,K_f)$, on a:
$$\text{S}_{K_f}(V)=\cup_i \left(G(F)\cap\gamma_iK_f\gamma_i^{-1}\right)\bk X_G.$$
Les ensembles $\text{Cl}(G,K_f)$ v\'erifient les propri\'et\'es suivantes:
\begin{itemize}
\item Dans le cas 1 si $\eta=-1$ le groupe $G$ v\'erifie l'approximation
forte, on a donc pour tout compact $\text{Cl}(G,K_f)=\{1\}$ et si
$K'_f\subset K_f$:
$$[G(F)\cap K_f:G(F)\cap K_f']=[K_f:K_f'],$$
on posera $G^0=G$.
\item Dans le cas 1 si $\eta=1$ le groupe alg\'ebrique $G$ n'est pas
connexe, notons $G^0$ sa composante connexe, on a une suite exacte:
$$1\rightarrow G^0\rightarrow G\stackrel{\det}{\rightarrow}\{\pm
1\}\rightarrow 1.$$
Soit $K(1)$ un compact maximal de $G(\mathbb{A}^f)$. On v\'erifie ais\'ement
que $\det K(1)=\{\pm 1\}(\mathbb{A}^f)$. On suppose que
$K_f=\prod_v K_v$ est distingu\'e dans $K$. Posons $K_f^0=K_f\cap
G^0(\mathbb{A}^f)$. Notons $S$ l'ensemble des places o\`u $\det K_v=1$. En utilisant le d\'eterminant on v\'erifie que:
$$\text{card}\left(\text{Cl}(G,K_f)\right)=\text{card}\left(\text{Cl}(G^0,K_f^0)\right)2^{\text{card} S-1}.$$
\item Dans le cas 2. Soit $G^0$ le groupe d\'eriv\'e de $G$. On a une suite exacte:
$$1\rightarrow G^0\rightarrow G\stackrel{\det}{\rightarrow} U(1)\rightarrow 1.$$
En utilisant que $G^0$ est simplement connexe on v\'erifie que:
$$\text{Cl}(G,K_f)=\text{Cl}(U(1),\det K_f).$$
\end{itemize}

\subsection{Multiplicit\'e des s\'eries discr\`etes.}\label{paramu}
Soit $K_{f,n}$ une suite de sous-groupe compact ouvert de $G(\mathbb{A}^f)$
et posons $K_{f,n}^0=K_{f,n}\cap G^0(\mathbb{A}^f)$. Soit $S$ un ensemble
fini de places de $F$, on suppose que $K_{f,n}\stackrel{S}{\rightarrow}1$
dans le sens de \cite{Clo}. Soit $\pi_\infty^0$ une s\'erie discr\`ete de
$G^0(F_\infty)$. On a alors \cite{Clo}:
$$\text{vol}(K_{f,n}^0)\ll m\left(\pi_\infty^0, L^2_0(G^0(F)\bk G^0(\mathbb{A})/K_{f,n}^0)\right)\ll \text{vol}(K_{f,n}^0)$$
$L^2_0$ d\'esigne le spectre cuspidal. Soit $\pi_\infty$ une extension de
$\pi_\infty^0$ \`a $G(F_\infty)$. On remarque que:
\begin{itemize}
\item Cas 1 $\eta=1$, $G^0=G$.
\item Cas 1 $\eta=-1$, 
\begin{eqnarray}\label{mult1}
m(\pi_\infty,  L^2_0(G(F)\bk
G(\mathbb{A})/K_{f,n}))&=& 2^\alpha m(\pi_\infty^0, L^2_0(G^0(F)\bk
G^0(\mathbb{A})/K_{f,n}^0))
\end{eqnarray}
pour un certain $\alpha$.
\item Cas 2, 
\begin{multline}\label{mult2}
m(\pi_\infty,L^2_0(G(F)\bk
G(\mathbb{A})/K_{f,n}))=\\\text{card}\left(\text{Cl}(U(1),\det K_{f,n}\right)m(\pi_\infty^0,L^2_c(G^0(F)\cap K_{f,n}\bk X_G).
\end{multline}
\end{itemize}

\subsection{Formes harmoniques}
 Nous allons dans cette section d{\'e}crire du point de vue de la
th{\'e}orie des repr{\'e}sentations les formes diff{\'e}rentielles sur
ces vari{\'e}t{\'e}s. Selon une convention usuelle nous noterons les alg{\`e}bres de Lie r{\'e}elles avec un indice $0$ et leur complexifi{\'e} sans indice. On note $\Omega$ l'{\'e}l{\'e}ment de Casimir du centre de l'alg{\`e}bre de Lie de $G(F_\infty)$. Soit $\mathfrak{k}_0\subset \mathfrak{g}_0$, l'alg{\`e}bre de Lie de $K_\infty$. Notons $\mathfrak{p}_0$ le suppl{\'e}mentaire orthogonal pour la forme de Killing de $\mathfrak{k}_0$ dans $\mathfrak{g}_0$. Alors $\mathfrak{p}$ s'identifie au complexifi{\'e} du plan tangent de $\text{S}_{K_f}(V)$ en l'identit{\'e}. La forme de Killing s'identifiant {\`a} la m{\'e}trique sur le plan tangent. Les formes diff{\'e}rentielles $L^2$ de degr{\'e} $i$ sur $\text{S}_{K_f}(V)$ sont donc:
$$\hom_{K_\infty}(\bigwedge^i\mathfrak{p},L^2(G(\Q)\bk G(\A)/K_f))$$ 
On remarque que le centre de l'alg{\`e}bre de Lie de $G(F_\infty)$ agit
({\`a} droite) sur cet espace, donc en particulier l'\'el\'ement de Casimir
$\Omega$. Le lemme de Kuga affirme que l'ensemble des formes
diff{\'e}rentielles harmoniques $L^2$ de degr\'e $i$ est \'egal \`a:
$$H_2^i(\text{S}_{K_f}(V))=\hom_{K_\infty}(\bigwedge^i\mathfrak{p},L^2(G(\Q)\bk G(\A)/K_f)))^{\Omega=0}.$$
On en d{\'e}duit donc que:
$$H_2^i(\text{S}_{K^f}(V),\C)=\oplus_{\pi=\pi_\infty\otimes \pi_f} \hom_{K_\infty}(\bigwedge^i\mathfrak{p},\pi_\infty)\otimes\pi_f^{K_f},$$
la somme {\'e}tant index{\'e}e par les sous-repr{\'e}sentations irr{\'e}ductibles sous l'action de $G(\A)$ de: 
$$L^2(G(F)\bk G(\A))^{\Omega=0}.$$ 
 Les repr{\'e}sentations $\pi_\infty$ v{\'e}rifiant les deux propri{\'e}t{\'e}s:
\begin{itemize}
\item$\hom_{K_\infty}(\bigwedge^*\mathfrak{p},\pi_{\infty})\neq 0$ 
\item l'action du Casimir (une constante not{\'e}e $\pi_\infty(\Omega)$) est triviale, 
\end{itemize}
sont appel{\'e}es les repr{\'e}sentations cohomologiques. Elles ont {\'e}t{\'e} classifi\'ees par Vogan et Zuckermann dans \cite{VZ}. Nous nous fixons maintenant une sous-alg{\`e}bre de Cartan $\mathfrak{t}_0$ de $\mathfrak{k}_0$. Les racines $\Delta(\mathfrak{t}_0,\mathfrak{g})$ sont des {\'e}l{\'e}ments de $i\mathfrak{t}_0^*$. On consid{\`e}re les sous-alg{\`e}bres paraboliques $\mathfrak{q}=\mathfrak{l}\oplus \mathfrak{u}$, o{\`u} $\mathfrak{l}$ est le centralisateur d'un {\'e}l{\'e}ment $X\in i\mathfrak{t}_0$ et $\mathfrak{u}$ est le sous-espace engendr{\'e} par les racines positives de $X$ dans $\mathfrak{g}$. Alors $\mathfrak{u}$ est stable sous l'involution de Cartan, on a donc une d{\'e}composition $\mathfrak{u}=\mathfrak{u}\cap\mathfrak{k}\oplus \mathfrak{u}\cap\mathfrak{p}$. Soit $R(\mathfrak{q})$ la dimension de $\mathfrak{u}\cap\mathfrak{p}$. Le sous-module engendr{\'e} sous l'action de $K_\infty$ par la droite $\bigwedge^{R(\mathfrak{q})}\mathfrak{u}\cap\mathfrak{p}$ dans $\bigwedge^{R(\mathfrak{q})}\mathfrak{p}$ est un sous-espace irr{\'e}ductible not{\'e} $V(\mathfrak{q})$ qui appara{\^\i}t avec multipli\-cit{\'e} $1$ dans $\bigwedge^{R(\mathfrak{q})}\mathfrak{p}$. Vogan et Zuckermann d{\'e}montrent qu'il existe une unique repr{\'e}sentation unitaire irr{\'e}ductible $\pi_\infty$ v{\'e}rifiant:
\begin{itemize}
\item$\hom(V(\mathfrak{q}),\pi_\infty)\neq 0$ 
\item $\pi_\infty(\Omega)=0$.
\end{itemize}
Nous notons $A_\mathfrak{q}$ cette repr{\'e}sentation. Elle v{\'e}rifie de plus:
\begin{itemize}
\item $\hom_{K_\infty}(\bigwedge^j\mathfrak{p},A_\mathfrak{q})=0$ si $j<R(\mathfrak{q})$ 
\item $\hom_{K_\infty}(\bigwedge^j\mathfrak{p},A_\mathfrak{q})=\C$ si $j=R(\mathfrak{q})$.
\end{itemize}
Le sous-espace:
\begin{eqnarray*}
H^{R(\mathfrak{q})}_\mathfrak{q}(\text{S}_{K^f}(V),\C)&=&\oplus_{\substack{\pi=\pi_\infty\otimes\pi_f\\\pi_\infty=A_\mathfrak{q}}}\hom(\bigwedge^{R(\mathfrak{q})}\mathfrak{p},A_\mathfrak{q})\otimes\pi_f^{K_f}\\
&\simeq&\oplus_{\substack{\pi=\pi_\infty\otimes\pi_f\\\pi_\infty=A_\mathfrak{q}}}\pi_f^{K_f}\\
\end{eqnarray*}
est appel{\'e} la partie fortement primitive de type $A_\mathfrak{q}$ de la
cohomologie. La
correspondance theta nous permet de cr{\'e}er des classes de cohomologie
dans la partie fortement primitive associ{\'e}e {\`a} certaines alg{\`e}bres paraboliques. 

\section{Utilisation de la correspondance theta}\label{partie2}
\subsection{Rappel}
On se donne un caract{\`e}re non trivial $\psi$ de $\A_F/F$.
 On se donne {\'e}galement
un caract{\`e}re $\chi$ de $\A_E^\times/E^\times$ v{\'e}rifiant $\chi_{|\A_F^\times}=\epsilon_{E/F}$.  Soit $W$ un $E$-espace vectoriel de dimension finie muni d'une forme
sesquilin{\'e}aire $\langle,\rangle$ de sorte que $W$ soit
$-\eta$-hermitien. Nous faisons
l'hypoth\`ese suivante, dans le cas
$(1)$ nous supposerons que la dimension de $W$ est paire si
$\eta=-1$. On note $m=\dim_E W$. On supposera toujours que $n>m$ et dans le
cas (2) que $n>m+1$. Le $F$-espace vectoriel $\mathbf{W}=W\otimes_E V $ est muni de la forme sympl{\'e}ctique:
$$B(,)=\tr_{E/F}(\langle,\rangle\otimes(,)^\iota).$$
Soit $\text{Sp}(\mathbf{W})$ (resp. $U(W)$) le groupe des
isom{\'e}tries de $\mathbf{W}$ (resp. $W$). La paire de groupes $(U(W),U(V))$ forme une paire duale dans le sens de Howe \cite{Howe}. Notons aussi
$\text{Mp}(\mathbf{W})$ le rev{\^e}tement m{\'e}taplectique de
$\text{Sp}(\mathbf{W})$ qui est une extension:
$$1\rightarrow \text{S}^1\rightarrow \text{Mp}(\mathbf{W})\rightarrow
\text{Sp}(\mathbf{W})\rightarrow 1.$$
D'apr{\`e}s notamment Rao, Perrin et Kudla
\cite{K}, le choix de $\chi$ d{\'e}termine un scindage:
$$i_\chi: U(W)(\A)\times U(V)(\A)\rightarrow Mp(\mathbf{W})(\A).$$
Nous rappelons dans la partie \ref{scindage} quelques propri\'et\'es de cette application
ainsi qu'un r\'esultat de Pan \cite{Pan1}. Ce choix d{\'e}termine une repr{\'e}sentation\par\noindent $(\om_\chi,\ S=\otimes_v S_v)$ de
$U(W)(\A)\times U(V)(\A)$ par restriction de la repr\'esentation
m\'etapl\'ectique de  $Mp(\mathbf{W})(\A)$. On se donne une place $v$ de $F$ et $\pi_v$
une repr{\'e}sentation irr{\'e}ductible de $U(W)(F_v)$. Posons:
$$\Theta_\chi(\pi_v,V)=(\pi_v\otimes \om_\chi)_{U(W)(F_v)}$$
(La notation $\Pi_G$, si $\Pi$ est une repr\'esentation d'un groupe $G$,
d\'esigne les coinvariants de $\Pi$ sous l'action de
$G$). $\Theta_\chi(\pi,V)$ est une repr\'esentation de $U(V)(F_v)$. Dans
\cite{Howe}, il est conjectur\'e que:
\begin{conj}[Howe]\label{conjH} $\Theta_\chi(\pi,V)$ admet un unique
quotient irr\'eductible.
\end{conj}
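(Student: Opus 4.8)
The plan is to prove the statement one place at a time, distinguishing the archimedean place, the non-archimedean places of odd residual characteristic, and the dyadic places.

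At an archimedean place $v$ this is the theorem of Howe \cite{Howe}. The starting point is the decomposition of the restriction of $\om_\chi$ to a maximal compact subgroup $\widetilde K_v$ of $U(W)(F_v)\times U(V)(F_v)$ into joint harmonics: the space of harmonic vectors sets up a bijection between certain $\widetilde K_v$-types coming from $U(W)(F_v)$ and those coming from $U(V)(F_v)$. Passing to the underlying Harish-Chandra modules, one deduces that $\Theta_\chi(\pi_v,V)$ is admissible of finite length, has a fixed infinitesimal character, and carries a distinguished minimal $(K_v\cap U(V)(F_v))$-type occurring with multiplicity one. Since every irreducible quotient of $\Theta_\chi(\pi_v,V)$ must contain this minimal type, and it occurs only once, the maximal semisimple quotient is irreducible.

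At a non-archimedean place $v$ of odd residual characteristic this is the theorem of Waldspurger \cite{Wald0}, whose strategy I would follow. One first uses Kudla's filtration \cite{K} of the restriction of $\om_\chi$ to a maximal parabolic subgroup $P$ of $U(V)(F_v)$, whose graded pieces are, up to a twist by a character, parabolically induced from Weil representations attached to dual pairs of strictly smaller rank. Together with the exactness of the Jacquet functor and Frobenius reciprocity, this expresses $\hom_{U(W)(F_v)}(\om_\chi,\ \pi_v\otimes\tau)$, for $\tau$ parabolically induced on $U(V)(F_v)$, in terms of theta lifts in smaller rank; combined with the analogous filtration on the $U(W)$-side, an induction on $\dim V+\dim W$ reduces the statement to the case where $\pi_v$ is supercuspidal. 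For $\pi_v$ supercuspidal, $\Theta_\chi(\pi_v,V)$ is of finite length, and a direct analysis --- exploiting the MVW involution and the precise structure of its Jacquet modules, as in \cite{Wald0} --- shows that its maximal semisimple quotient is irreducible.

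The remaining case, $v$ dyadic, is not covered by \cite{Howe} or \cite{Wald0}; it is treated directly in Section \ref{finitude2}. The step I expect to be the main obstacle is precisely the supercuspidal case at a non-archimedean place: bounding $\dim\hom_{U(W)(F_v)}(\om_\chi,\ \pi_v\otimes\sigma_v)$ by $1$ uniformly over the irreducible representations $\sigma_v$ of $U(V)(F_v)$ requires the full fine structure of the theta lift together with the conservation relation for first-occurrence indices, and it is here that the restriction $p\ne 2$ enters in an essential way in Waldspurger's argument.
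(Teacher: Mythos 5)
Ce que vous cherchez \`a d\'emontrer est pr\'ecis\'ement ce que l'article laisse \`a l'\'etat de conjecture : l'\'enonc\'e \ref{conjH} n'est pas prouv\'e dans le texte. L'article se contente de citer le th\'eor\`eme de Howe et Waldspurger (\cite{Howe}, \cite{Wald0}, \cite{Wald}) qui \'etablit la conjecture lorsque la place $v$ ne divise pas $2$, et partout o\`u la conjecture est n\'ecessaire en une place dyadique (par exemple dans le deuxi\`eme point du th\'eor\`eme \ref{conv}), elle est explicitement prise comme \emph{hypoth\`ese}. Vos deux premiers cas (archim\'edien par les harmoniques jointes et le $K$-type minimal ; non archim\'edien impair par la filtration de Kudla, la r\'eduction au cas supercuspidal et l'involution de MVW) sont des r\'esum\'es raisonnables des preuves de Howe et de Waldspurger, mais ce sont des r\'esultats ext\'erieurs que l'article cite, non des arguments qu'il red\'emontre.

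Le point faux de votre proposition est l'affirmation que le cas dyadique \og est trait\'e directement dans la partie \ref{finitude2} \fg. Cette partie d\'emontre la proposition \ref{Pfin}, c'est-\`a-dire une majoration $\dim \Theta(\pi_u,V)^{K_u}\leq C\dim\pi_u^{K'_u}$ : c'est un r\'esultat de finitude sur les dimensions d'espaces d'invariants, obtenu en adaptant la d\'emonstration du th\'eor\`eme 1.4 du chapitre 5 de \cite{MVW}, et il n'implique en rien l'unicit\'e du quotient irr\'eductible de $\Theta_\chi(\pi_v,V)$. Aucun argument de l'article ne comble le cas $v|2$, et c'est exactement pour cette raison que l'\'enonc\'e reste une conjecture dans le texte. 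Votre preuve telle quelle contient donc un trou essentiel : le cas dyadique n'est d\'emontr\'e ni par vous ni par l'article.
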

La proposition a \'et\'e d\'emontr\'ee (\cite{Howe}, \cite{Wald0} et
\cite{Wald}) dans presque tout les cas:
\begin{theo}[Howe et Waldspurger]
La conjecture de Howe est vraie si la place $v$ ne divise pas $2$.
\end{theo}
On notera alors $\theta_\chi(\pi_v,V)$ l'unique quotient irr\'eductible de $\Theta_\chi(\pi_v,V)$.
\subsection{Correspondance theta archim\'edienne}
On choisit un compact maximal $K_\infty^W$ de $U(W)(F_\infty)$ et on notera
$K_\infty$, $K_\infty^V$ pour \'eviter des confusions. Soit $v$ une
place archim\'edienne de $F$. Le th{\'e}or{\`e}me $6.2$ de \cite{Li1} montre que les repr{\'e}sentations $A_\mathfrak{q}$ sont dans l'image de la correspondance theta pour $\mathfrak{q}=\mathfrak{l}\oplus \mathfrak{u}$ de la forme suivante:
\begin{enumerate}
\item $U(W)_v=O(p,q)$ et $U(V)_v=\text{Sp}_{2n}$. On suppose que $p$ et $q$ sont pairs. soit $r=\frac{p}{2}$ et $s=\frac{q}{2}$. On suppose que $r+s\leq n$. Alors $K_\infty^W\simeq U(n)$ et $K_\infty^V\simeq O(p)\times O(q)$. La forme de $\mathfrak{l}_0$ doit {\^e}tre:
 $$\mathfrak{l_0}= \mathfrak{u}(1)^r\times \mathfrak{u}(1)^s\times \mathfrak{sp}_{n-(r+s)}.$$
\item $U(W)_v=U(m,n)$ et $U(V)_v=U(p,q)$ avec $m+n\leq p+q$. On a: $K^W=U(m)\times U(n)$ et $K^V=U(p)\times U(q)$. Soit $r,s,k,l\geq 0$ tels que $r+s=m$, $k+l=n$, $r+l\leq p$ et $s+k\leq q$. La forme de $\mathfrak{l}_0$ doit {\^e}tre:
$$\mathfrak{l_0}= \mathfrak{u}(1)^r\times \mathfrak{u}(1)^l\times\mathfrak{u}(1)^k\times \mathfrak{u}(1)^s\times \mathfrak{u}(p-r-l,q-s-k).$$
\item $U(W)_v=\text{Sp}_{2n}$ et $U(V)_v=O(p,q)$, $p+q>2n$, $K^W=U(n)$ et $K^V=O(p)\times O(q)$, $p+q>2n$. Soit $r\leq\frac{p}{2}$ et $s\leq\frac{q}{2}$ tels que $r+s=n$. La forme de $\mathfrak{l}_0$ doit {\^e}tre:
$$\mathfrak{l_0}= \mathfrak{u}(1)^r\times \mathfrak{u}(1)^s\times
\mathfrak{so}(p-2r,q-2s).$$
\end{enumerate}
Ainsi {\`a} toute repr{\'e}sentation cohomologique $A_\mathfrak{q}$ de $U(V)(F_\infty)$ de ce type, on peut associer un unique couple $(W_\mathfrak{q},\pi_\mathfrak{q})$ tel que $\theta_\chi(V,\pi_\mathfrak{q})=A_\mathfrak{q}$, de plus $\pi_\mathfrak{q}$ {\it est une s{\'e}rie discr{\`e}te}. 
\begin{rema}\label{rmq}
Il existe plusieurs alg{\`e}bres paraboliques ayant le m{\^e}me
$\mathfrak{l}_0$ mais ne donnant pas la m{\^e}me repr{\'e}sentation
$A_\mathfrak{q}$. Cependant dans chacun de ces cas $\mathfrak{l_0}$
d{\'e}termine le degr{\'e} $R(\mathfrak{q})$ introduit pr\'ec\'edemment:
\begin{itemize}
\item Dans le cas (1), $R(\mathfrak{q})=(r+s)(2n-(r+s))$.
\item Dans le cas (2): $R(\mathfrak{q})=pq-(p-(r+l))(q-(r+s))$.
\item Dans le cas (3): $R(\mathfrak{q})=ps+r(q-2s)$.
\end{itemize}
Dans les cas $(1)$ et $(3)$ les vari\'et\'es introduites sont des vari\'et\'es de
Shimura et toute la cohomologie holomorphe primitive est d\'ecrite par la
correspondance theta et provient de groupes compacts.
\end{rema}
\subsubsection{Param\`etre $d(\mathfrak{q})$}
On peut calculer les param\`etres $d(\mathfrak{q})$ \`a l'aide des param\`etres
de Langlands des repr\'esentations $A_\mathfrak{q}$. On peut aussi les obtenir plus
simplement en utilisant que $A_\mathfrak{q}$ appara\^it discr\`etement dans la
repr\'esentation de Weil, il suffit alors d'utiliser le th\'eor\`eme 3.2 de
\cite{Li3}. Posons:
$$d=\left\{\begin{array}{lll}
1\text{ si $V$ symplectique}\\
0\text{ si $V$ orthogonal} \\
\frac{1}{2}\text{ si $V$ unitaire.}  \\
\end{array}\right.$$
On obtient que:
$$d(\mathfrak{q})=\frac{m}{n-2+2d}$$

\subsection{Correspondance globale}

Globalement, on suppose que $W$ a la m{\^e}me signature que $W_\mathfrak{q}$ et on consid{\`e}re $\pi$ une repr{\'e}sentation automorphe cuspidale de
$U(W)(F)\bk U(W)(\A)$ de composante archim{\'e}dienne $\pi_\mathfrak{q}$. Le rel\`evement theta global est l'application:
$$\pi\otimes S_\chi\rightarrow \mathcal{A}(U(V)(F)\bk U(V)(\A))$$
qui {\`a} $f\otimes \vhi$ associe:
\begin{equation}
\theta_{\psi,\chi}(f,\vhi)(h)=\int_{U(W)(F)\bk U(W)(\A)}f(g)\Theta_{\psi,\chi}(g,h,\vhi)dg \label{deftheta}
\end{equation}
o{\`u} $\Theta_{\psi,\chi}(g,h,\vhi)$ est le noyau theta. On note $\Theta(\pi,V)$
l'image de cette application dans les formes automorphes et
$\Theta(\pi,V)_d$ la partie de carr{\'e} int{\'e}grable de $\Theta(\pi,V)$, cette partie est discr\`ete de longueur finie. Soit $\Theta$ la projection:
$$\pi\otimes S_\chi\rightarrow \Theta(\pi,V)_d.$$
Un changement de variable dans la d{\'e}finition (\ref{deftheta}) montre que cette application se factorise en:
$$\Theta:\otimes_v \Theta(\pi_v,V)\rightarrow \Theta(\pi,V)_d.$$
Si cette application est non nulle on obtient des repr\'esentations cohomologiques de
type $A_\mathfrak{q}$. Nous le prouvons en pr{\'e}cisant \cite{KR0}[prop 7.1.2].
\begin{theo}\label{conv}Supposons $\Theta(\pi,V)_d\neq 0$, alors:
\begin{itemize}
\item L'application $\Theta$ se factorise en une application:
$$\otimes_{v|2}\Theta(\pi_v,V)\otimes \otimes_{v\nmid 2}\theta(\pi_v,V)\rightarrow \Theta(\pi,V)_d.$$
\item Si de plus la conjecture de Howe est vraie (conj \ref{conjH}) pour les
repr{\'e}sentations $\pi_v$ pour toute place $v|2$, l'application theta se factorise en un isomorphisme:
$$ \otimes_{v}\theta(\pi_v,V)\rightarrow \Theta(\pi,V)_d.$$
En particulier $\Theta(\pi,V)_d$ est alors irr\'eductible.
\item  Si:
\begin{eqnarray}\label{Weil}
\left\{\begin{array}{cc} 
V\text{ est anisotrope ou}\\
n-r>m+2d-1.
\end{array}\right.
\end{eqnarray}
Les fonctions theta sont de carr\'e int\'egrables.
\end{itemize}
\end{theo}
\begin{proof}
Soit $v_0$ une place telle que $\pi_{v_0}$ v{\'e}rifie la conjecture de Howe. Soit $J_{v_0}$ l'unique sous-espace invariant de 
$\Theta(\pi_{v_0},V)$ tel que le quotient $\Theta(\pi_{v_0},V)/J_{v_0}$ soit irr{\'e}ductible. Soit $x_{v_0}\in J_{v_0}$ et $x^{v_0}\in \otimes_{v\neq v_0}\Theta(\pi_v,V)$. Il s'agit de d{\'e}montrer que $\Theta(x_{v_0}\otimes x^{v_0})=0$
Soit $\Pi\subset \Theta(\pi,V)_d$ une composante irr{\'e}ductible. On note
$p$ la projection orthogonale de $\Theta(\pi,V)_d$ sur $\Pi$. Supposons que $p\circ\Theta(x_{v_0}\otimes x^{v_0})\neq 0$. Il existe alors un tenseur pur $\otimes_v \zeta_v\in\otimes\Pi_v$ tel que le produit scalaire $(p\circ\Theta(x_{v_0}\otimes x^{v_0}),\otimes_v \zeta_v)$ soit non nul. L'application:
$$
\begin{array}{ccc}
\Theta(\pi_v,V)&\rightarrow& \Pi_v\\
 x&\mapsto& (p\circ\Theta(x^{v_0}\otimes x),\zeta^{v_0})
\end{array}
$$
est non nulle (donc surjective) et $U(V)(F_v)$-{\'e}quivariante. Son noyau
est donc $J_v$, on en d{\'e}duit que $p\circ\Theta(x_{v_0}\otimes x^{v_0})$
est nul d'o{\`u} une contradiction. En faisant varier $\Pi$ parmi les
composantes irr{\'e}ductibles de $\Theta(\pi,V)_d$, on en d{\'e}duit les
deux premiers points du th{\'e}or{\`e}me. La preuve du troisi\`eme point est
rappel\'ee dans la partie \ref{SWF}, le crit\`ere (\ref{Weil})
sera appel\'e le crit\`ere de Weil.
\end{proof}
\begin{coro}
$\Theta(\pi,V)_d$ est constitu{\'e}e de repr{\'e}sentations automorphes de type $A_\mathfrak{q}$ {\`a} l'infini.
\end{coro}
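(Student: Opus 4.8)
The plan is to read off the corollary directly from the first assertion of Theorem \ref{conv} together with the computation of the archimedean theta correspondence recalled above. If $\Theta(\pi,V)_d=0$ there is nothing to prove, so I would assume it is non-zero. By the first point of Theorem \ref{conv} the theta projection factors as a surjection
$$\Theta\colon\otimes_{v|2}\Theta(\pi_v,V)\otimes\otimes_{v\nmid 2}\theta(\pi_v,V)\longrightarrow\Theta(\pi,V)_d,$$
so any irreducible constituent $\Pi=\otimes_v\Pi_v$ of $\Theta(\pi,V)_d$ (recall this space is discrete of finite length) is an irreducible subquotient of the restricted tensor product on the left. For every place $v\nmid 2$ the local theta quotient $\theta(\pi_v,V)$ is already irreducible, so necessarily $\Pi_v\simeq\theta(\pi_v,V)$ for all such $v$; in particular this pins down the archimedean components of $\Pi$.

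It then remains only to identify $\theta(\pi_v,V)$ at the archimedean places. At the unique non-compact archimedean place $v_\infty$ one has, by hypothesis, $\pi_{v_\infty}=\pi_\mathfrak{q}$, the discrete series of $U(W)(F_{v_\infty})\simeq U(W_\mathfrak{q})$ attached to $\mathfrak{q}$; Theorem $6.2$ of \cite{Li1}, recalled in the previous subsection, gives precisely $\theta_\chi(\pi_\mathfrak{q},V)=A_\mathfrak{q}$, whence $\Pi_{v_\infty}\simeq A_\mathfrak{q}$. At the remaining archimedean places $U(W)(F_v)$ is compact, both $\pi_v$ and $\theta(\pi_v,V)$ are finite dimensional, and these fixed finite-dimensional components are part of what is meant by ``archimedean type $A_\mathfrak{q}$''. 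Combining, every irreducible constituent of $\Theta(\pi,V)_d$ is of type $A_\mathfrak{q}$ at infinity, which is the assertion.

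I do not expect a genuine obstacle here: the substantive work has been absorbed into Theorem \ref{conv} and into Li's description of the archimedean correspondence, and the corollary only exploits the elementary fact that an irreducible subquotient of a restricted tensor product inherits, at any place where the corresponding local factor is irreducible, that local factor itself. The one point deserving a word of care is the distinction between $\Theta(\pi_{v_\infty},V)$ and its irreducible quotient $\theta(\pi_{v_\infty},V)$; but since archimedean places do not divide $2$, the Howe conjecture holds there by Howe and Waldspurger, so $\theta(\pi_{v_\infty},V)$ is unambiguously defined, and Li's theorem identifies it with $A_\mathfrak{q}$.
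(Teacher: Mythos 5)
Your proposal is correct and is essentially the argument the paper has in mind: the corollary is stated without proof as an immediate consequence of the first point of Theorem \ref{conv} (whose proof already exhibits, for each place $v_0$ where Howe's conjecture holds, that the $v_0$-component of any irreducible constituent of $\Theta(\pi,V)_d$ is forced to be $\theta(\pi_{v_0},V)$) together with Li's identification $\theta_\chi(\pi_\mathfrak{q},V)=A_\mathfrak{q}$ at the non-compact archimedean place. Your explicit remark that archimedean places do not divide $2$, so that $\theta(\pi_{v_\infty},V)$ is well defined, is exactly the point one should make.
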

Nous d{\'e}montrons \'egalement dans la partie \ref{SWF} un r{\'e}sultat d'injectivit{\'e}:
\begin{lemm}\label{injW} Sous les hypoth\`eses de Weil:\par\noindent
Soit $\pi_1\oplus^\perp\dots\oplus^\perp\pi_r\subset
L^2_0(U(W)(F)\bk U(W)(\A))$, une famille finie de
formes automorphes cuspidales irr{\'e}ductibles,
alors les repr\'esentations $\Theta(\pi_i,V)$ sont deux {\`a} deux orthogonales.
\end{lemm}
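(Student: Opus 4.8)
Le plan est de d\'eduire le lemme de la formule du produit scalaire de Rallis, c'est-\`a-dire de la combinaison de la m\'ethode du d\'edoublement et de la formule de Siegel--Weil. Fixons $i\neq j$, $f_1\in\pi_i$, $f_2\in\pi_j$ et $\vhi_1,\vhi_2\in S_\chi$; il suffit de voir que le produit scalaire de Petersson
\[
\langle\theta_{\psi,\chi}(f_1,\vhi_1),\theta_{\psi,\chi}(f_2,\vhi_2)\rangle
\]
sur $U(V)(F)\bk U(V)(\A)$ est nul. Sous les hypoth\`eses de Weil, le troisi\`eme point du th\'eor\`eme \ref{conv} assure que les int\'egrales d\'efinissant ces rel\`evements convergent absolument et fournissent des formes de carr\'e int\'egrable; c'est aussi dans ce r\'egime de convergence que vaut la formule de Siegel--Weil invoqu\'ee plus bas, de sorte que toutes les interversions d'int\'egrales ci-dessous sont licites par Fubini.

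Je commencerais par d\'evelopper. En ins\'erant la d\'efinition (\ref{deftheta}) et en intervertissant les int\'egrales on obtient
\[
\langle\theta_{\psi,\chi}(f_1,\vhi_1),\theta_{\psi,\chi}(f_2,\vhi_2)\rangle=\int\int f_1(g_1)\,\overline{f_2(g_2)}\,I(g_1,g_2)\,dg_1\,dg_2,
\]
les int\'egrales ext\'erieures portant sur $U(W)(F)\bk U(W)(\A)$, o\`u
\[
I(g_1,g_2)=\int_{U(V)(F)\bk U(V)(\A)}\Theta_{\psi,\chi}(g_1,h,\vhi_1)\,\overline{\Theta_{\psi,\chi}(g_2,h,\vhi_2)}\,dh.
\]
L'int\'egrande est le noyau theta associ\'e \`a la paire duale $(U(W\oplus W^-),U(V))$ et \`a la fonction de Schwartz $\vhi_1\otimes\overline{\vhi_2}$, $W^-$ d\'esignant $W$ muni de la forme oppos\'ee (ce qui rend compte de la conjugaison). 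La formule de Siegel--Weil, valable ici puisque les hypoth\`eses de Weil nous placent dans le r\'egime de convergence absolue, identifie alors $I(g_1,g_2)$, \`a une constante $\kappa$ pr\`es, \`a la valeur au point distingu\'e $s_0$ d'une s\'erie d'Eisenstein de Siegel $E(\iota(g_1,g_2),s_0;\Phi)$ sur $U(W\oplus W^-)(\A)$, $\iota$ \'etant le plongement diagonal de $U(W)\times U(W)$ et $\Phi$ la section de la s\'erie principale d\'eg\'en\'er\'ee attach\'ee \`a $\vhi_1\otimes\overline{\vhi_2}$.

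Il resterait alors \`a reconna\^{\i}tre dans
\[
\langle\theta_{\psi,\chi}(f_1,\vhi_1),\theta_{\psi,\chi}(f_2,\vhi_2)\rangle=\kappa\int\int E(\iota(g_1,g_2),s_0;\Phi)\,f_1(g_1)\,\overline{f_2(g_2)}\,dg_1\,dg_2
\]
une int\'egrale z\^eta de d\'edoublement de Piatetski-Shapiro et Rallis, et \`a d\'ecomposer la s\'erie d'Eisenstein selon les orbites de $U(W)(F)\times U(W)(F)$. La cuspidalit\'e de $\pi_i$ et $\pi_j$ annule toutes les contributions sauf celle de l'orbite ouverte, qui vaut
\[
\kappa\int_{U(W)(\A)}\Phi_{s_0}(\iota(g,1)\delta)\Big(\int_{U(W)(F)\bk U(W)(\A)}f_1(g'g)\,\overline{f_2(g')}\,dg'\Big)dg,
\]
$\delta$ \'etant un repr\'esentant de l'orbite ouverte. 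L'int\'egrale int\'erieure est $\langle\rho(g)f_1,f_2\rangle$, le produit scalaire de Petersson du translat\'e \`a droite $\rho(g)f_1$ par $f_2$; la compatibilit\'e du caract\`ere $\chi$ de la repr\'esentation de Weil avec celui de la s\'erie d'Eisenstein est pr\'ecis\'ement ce qui garantit que c'est ce produit scalaire, sans torsion parasite, qui appara\^{\i}t. Comme $\pi_i$ est stable sous $U(W)(\A)$, on a $\rho(g)f_1\in\pi_i$ pour tout $g$, et $\pi_i\perp\pi_j$ dans $L^2_0(U(W)(F)\bk U(W)(\A))$; l'int\'egrale int\'erieure est donc identiquement nulle, et le produit scalaire cherch\'e l'est aussi. (Variante: l'adjonction des rel\`evements theta donne $\langle\theta_{\psi,\chi}(f_1,\vhi_1),\theta_{\psi,\chi}(f_2,\vhi_2)\rangle=\langle f_1,\theta^*(\theta_{\psi,\chi}(f_2,\vhi_2))\rangle$, $\theta^*$ d\'esignant le rel\`evement theta de $U(V)$ vers $U(W)$ avec les donn\'ees conjugu\'ees, et il suffit alors de voir que $\theta^*$ envoie $\Theta(\pi_j,V)_d$ dans $\pi_j$.)

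L'obstacle principal est analytique plut\^ot que formel: il faut garantir que le point $s_0$ o\`u l'on applique Siegel--Weil appartient au domaine de convergence absolue \`a la fois de l'int\'egrale theta sur $U(V)(F)\bk U(V)(\A)$ et de la s\'erie d'Eisenstein de Siegel sur $U(W\oplus W^-)(\A)$, afin qu'aucune r\'egularisation ne soit n\'ecessaire et que le d\'eveloppement orbite par orbite de $E$ soit licite. C'est exactement l\`a qu'interviennent les hypoth\`eses de Weil (\ref{Weil}) --- $V$ anisotrope, ou $n-r>m+2d-1$ ---, qui fournissent aussi la carr\'e-int\'egrabilit\'e des fonctions theta du troisi\`eme point du th\'eor\`eme \ref{conv}; les deux \'enonc\'es seraient trait\'es ensemble dans la partie \ref{SWF}. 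Un point mineur \`a contr\^oler au passage est que la cuspidalit\'e de $\pi_i$ et $\pi_j$ \'elimine bien les orbites non ouvertes, ce qui est standard.
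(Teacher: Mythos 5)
Your proposal follows essentially the same route as the paper's own argument in Section~\ref{SWF}: develop the Petersson inner product of the theta lifts, apply the Siegel--Weil formula in the convergent regime to replace the inner integral over $[U(V)]$ by the value of a Siegel Eisenstein series on $U(2W)$, unfold via the $U(W)(F)\times U(W)(F)$-orbit stratification of $P_\Delta\backslash U(2W)$ where cuspidality kills all but the open orbit, and recognize the surviving term as an integral of $\langle\rho(g)f_1,f_2\rangle$, which vanishes identically when $\pi_i\perp\pi_j$. This is exactly the computation leading to~(\ref{FPR}) and the sentence ``La deuxi\`eme int\'egrale est le produit scalaire de $f$ et $u^{-1}\bullet f'$, elle est donc nulle si $\pi\neq\pi'$,'' so the two proofs coincide.
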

La preuve est une application du produit scalaire de Rallis (\'equation (\ref{FPR})). 
Nous noterons $H^R_{\theta,W}(\text{S}_{K^f}(V),\C)\subset
H^R_\mathfrak{q}(\text{S}_{K^f}(V),\C)$ le sous-espace engendr{\'e} par la
correspondance theta des formes automorphes cuspidales de $U(W)$ et
$H^R_\theta(...)$ la somme de ces espaces pour les diff\'erents $W$ possibles et de ces espaces tordus par un caract\`ere de $U(V)$ de composante archim\'edienne triviale. On a d'apr{\`e}s le lemme pr{\'e}c{\'e}dent:
$$H_\theta^R(\text{S}_{K^f}(V))=\oplus_{W,\alpha}\oplus_{\pi=\pi_\mathfrak{q}\otimes\pi_f} \left(\alpha\circ\det\otimes\Theta(\pi,V)\right)_{d,f}^{K^f}$$
la somme {\'e}tant index{\'e}e par les formes automorphes cuspidales de
$U(W)$ avec une composante archim{\'e}dienne {\`a} l'infini isomorphe {\`a}
$\pi_\mathfrak{q}$ et les caract\`eres automorphes $\alpha$ de $1$ (resp. $\{\pm 1\}$, resp. $U(1)$) de
composante archim\'edienne triviale dans le cas (1) $\eta=-1$ (resp. dans le cas (1) $\eta=-1$, resp. dans le cas (2)). 
\begin{rema}$\ $
\begin{itemize}
\item
Les rel\`evements theta locaux et globaux sont parfois nuls
\cite{Wald0}. Nous allons faire, pour obtenir une minoration,
l'hypoth{\`e}se de rang stable qui garantira que les rel\`evements theta
locaux et globaux sont non nuls. De plus, sous cette hypoth{\`e}se la conjecture de Howe est vraie. 
\item D'apr{\`e}s des travaux de Kudla et Millson, pour certains choix de $\mathfrak{q}$, ces classes de cohomologie correspondent aux composantes primitives de combinaisons de cycles
totalement g{\'e}od{\'e}siques naturels. 
\item La premi\`ere somme est bien directe, on peut le prouver en utilisant
le d\'ebut de la preuve de la formule du produit scalaire de Rallis.
\end{itemize}
\end{rema}
Pour $m$ suffisamment petit par
rapport {\`a} $n$, il devrait y avoir {\'e}galit{\'e} entre les deux
espaces:
\begin{eqnarray}\label{conjsur}
\varinjlim_{K^f}H_\theta^R(\text{S}(V)_{K^f},\C)=\varinjlim_{K^f}H_\mathfrak{q}^R(\text{S}(V)_{K^f},\C)
\end{eqnarray}
Ce qui revient donc {\`a} un r{\'e}sultat de surjectivit{\'e} de la
correspondance theta. On parle de rang stable en une place inerte $w$ de
$E$ si $V\otimes E_w$ contient un sous-espace isotrope d\'efinie sur $E_w$ de
dimension $m$. Dans le rang stable local Li prouve dans \cite{Li3} que la
conjecture de Howe pour une repr\'esentation unitaire est vraie et que de
plus le rel\^evement theta est non nul. L'hypoth\`ese suivante:
\begin{eqnarray}\label{rangf}
n>2m+4d-2
\end{eqnarray}
assure que pour toutes places $w$ de $E$, $V\otimes E_w$ est dans le rang
stable (il y a aussi une notion de rang stable pour les places d\'eploy\'ees
cf. \cite{Li2} page 206). Nous parlerons de rang singulier si $r> m$.
\begin{theo}\label{surj}$\ $
\begin{itemize}
\item Supposons (\ref{rangf}) alors $\Theta(\pi,V)$ est non nulle et irr\'eductible.
\item Supposons de plus le rang singulier et $W_\mathfrak{q}$ d\'efini
positif alors (\ref{conjsur}) est vrai.
\end{itemize}
\end{theo}
Les deux points du th\'eor\`eme sont de Li, ils sont d\'emontr\'es dans \cite{Li2}
et \cite{Li5}. Le deuxi\`eme point permet de traiter de mani\`ere compl\`ete la
cohomologie holomorphe de certaines vari\'et\'es de Shimura unitaire et de
Siegel d'apr\`es la derni\`ere remarque du paragraphe \ref{rmq}. Remarquons
que le th\'eor\`eme sous l'hypoth\`ese (\ref{rangf}) d\'ecoulera certainement de la
classification du spectre discret par Arthur au moins dans les cas anisotropes.
\subsection{Majoration de la partie $\theta$}

Nous supposons que $K_f=\prod_{v\nmid\infty}K_v$. On a:
\begin{eqnarray}\label{in1}
\dim H^R_{\theta,\mathfrak{q}}(\text{S}_{K^f}(V),\C)&\leq&\sum_\alpha\sum_{\substack{W \\W\otimes F_\infty\simeq W_\mathfrak{q}}}\sum_{\substack{\pi=\otimes \pi_v\\ \pi_\infty\simeq \pi_\mathfrak{q}}}\dim\left(\alpha\otimes\theta(\pi_f,V)\right)^{K_f}\\
&\leq&\sum_\alpha\sum_{\substack{W \\W\otimes F_\infty\simeq W_\mathfrak{q}}}\sum_{\substack{\pi=\otimes \pi_v\\ \pi_\infty\simeq \pi_\mathfrak{q}}}\prod_{v\nmid\infty}\dim\left(\alpha\otimes\theta(\pi_v,V)\right)^{K_v}\notag
\end{eqnarray}
(la premi\`ere somme \'etant ind\'ex\'ee sur les caract\`eres automorphes de $U(1)$ de
composante archim\'edienne triviale, le troisi\`eme signe somme {\'e}tant index{\'e} sur les repr{\'e}sentations automorphes cuspidales de $U(W)$). Nous introduisons maintenant les compacts dont nous allons mener l'{\'e}tude. Nous rappelons que la correspondance theta d{\'e}pend du choix d'un caract{\`e}re $\psi$ de $\A_F/F$. Soit $\mathfrak{f}$ le conducteur de $\psi\circ\tr_{E/F}$, c'est un id{\'e}al de $E$. On consid{\`e}re deux id{\'e}aux $\mathfrak{f}_V$ et $\mathfrak{f}_W$ de $E$ v{\'e}rifiant $\mathfrak{f}=\mathfrak{f}_V\mathfrak{f}_W$. Pour un r{\'e}seau $L$ de $W$ on d{\'e}finit:
$$L^*=\{x\in W| (x,y)\in \mathfrak{f}_W\}$$
On a $L\subset L^*$ si et seulement si $(L,L)\subset \mathfrak{f}_W$. On
consid{\`e}re un r{\'e}seau $L_W$ maximal pour cette propri{\'e}t{\'e}, on
appelera un tel r{\'e}seau presque autodual (un r\'eseau sera dit
autodual si $L=L^*$). On se donne {\'e}galement un
r{\'e}seau presque autodual de $L_V$ par rapport {\`a} $\mathfrak{f}_V$, on supposera dans les situations globales que $\mathfrak{f}_V=1$ et $\mathfrak{f}_W=\mathfrak{f}$. On
note si $v$ (resp. $w$) est une place finie de $F$ (resp. $E$) $\mathcal{O}_v$
(resp. $\mathcal{O}_w$) l'anneau des entiers de $F_v$ (resp. $E_w$). On
note enfin: 
$$\widehat{\mathcal{O}_E}=\prod_{w\nmid\infty}\mathcal{O}_w$$
Soit $\mathfrak{c}$ un id{\'e}al de $E$, d\'efinissons le compact ouvert
$K(\mathfrak{c})$ de $U(V)(\A^f)$:
\begin{eqnarray*}
K(\mathfrak{c})&=&
\{g\in U(W)(\A^f)|(g-1)L\otimes
\widehat{\mathcal{O}_E}\subset
\mathfrak{c}L\otimes\widehat{\mathcal{O}_E}\text{ et }gL\otimes
\widehat{\mathcal{O}_E}=L\otimes \widehat{\mathcal{O}_E}\}\\
&=&\prod_v K(\prod_{w|v}\mathfrak{c}_w)
\end{eqnarray*} 
ce sont les sous-groupes de congruence standard. On notera plus simplement
$X(\mathfrak{c})$ la vari{\'e}t{\'e} $\text{S}_{K(\mathfrak{c})}(V)$. Nous
noterons \'egalement $X^0(\mathfrak{c})$ l'union des composantes connexes de
$X(\mathfrak{c})$ dont l'image par le d\'eterminant est trivial (cf. paragraphe
\ref{composante}). Les formules (\ref{mult1}) et (\ref{mult2}) permettent
d'\'etudier la dimension de la cohomologie de $X^0(\mathfrak{c})$. Nous
notons $T_{V,\psi,\chi}$ l'ensemble des places de $F$ telles que l'un des
objets $V,\psi,\chi$ soit ramifi\'e au dessus de $v$ ainsi que les places
divisant deux. Soit $v$ une place de $F$ et $w$ une place de $E$ au dessus
de $v$. Nous notons $\varpi_v$ (resp. $\varpi_w$) une uniformisante de
l'anneau des entiers de $F_v$ (resp. $E_w$). Supposons $v\not\in
T_{V,\psi,\chi}$, alors $K_v(1)$ est un compact maximal de $U(V_v)$.
\begin{lemm}\label{Nram} Pour tout place finie $v\not\in T_{V,\psi,\chi}$:
\begin{itemize}
\item Si $\left(\alpha\otimes\Theta(\pi,V)\right)^{K(\varpi_w^k)}\neq 0$
alors $\alpha_{|\det K(\varpi_w^k)}=1$.
\item
Si $\Theta_\chi(\pi,V)^{K_v(1)}\neq 0$ alors $W$ et $\pi$ sont non ramifi{\'es}.
\end{itemize}
\end{lemm}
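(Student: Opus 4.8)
The plan is to work place by place, exploiting that for $v\notin T_{V,\psi,\chi}$ all the relevant local data — the hermitian space $V_v$, the additive character $\psi_v$, the character $\chi_v$ — are unramified, so the Weil representation $\om_\chi$ restricted to $U(W)(F_v)\times U(V)(F_v)$ has a canonical unramified vector coming from the characteristic function of the self-dual lattice $L_V\otimes\mathcal{O}_v$ (here we use the normalization $\mathfrak{f}_V=1$). First I would recall the standard computation of the local theta lift of the trivial-lattice data: the Schwartz function $\vhi_v^0=\mathbf{1}_{(L_W\otimes L_V)\otimes\mathcal{O}_v}$ is fixed by $K_v^W(1)\times K_v(1)$, and its image under $\Theta_\chi(\cdot,V)$ from an unramified $\pi_v$ is the spherical vector in an unramified representation of $U(V)(F_v)$; this is exactly the unramified theta correspondence of Howe–PS–Rallis–Kudla, valid since $v\nmid 2$ and nothing is ramified. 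This gives the second bullet: if $\Theta_\chi(\pi_v,V)^{K_v(1)}\neq 0$ then by the description of the Weil representation the only way a $K_v^W(1)$-invariant-producing datum can occur is with $L_W$ self-dual and $\pi_v$ unramified, hence $W_v$ is unramified; globalizing over all $v\notin T$ forces $W$ unramified (outside $T$, which is the content of the statement), and $\pi$ spherical at those places.

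Second, for the first bullet I would analyze the action of the determinant. The group $U(V)(F_v)$ sits in the exact sequence with $U(1)(F_v)$, and $\det K_v(\varpi_w^k)$ is an explicit open subgroup of $U(1)(F_v)$; an automorphic character $\alpha$ of $U(1)$ with trivial archimedean component and appearing in $(\alpha\otimes\Theta(\pi,V))^{K(\varpi_w^k)}$ must be trivial on this local subgroup. The key point is that $\Theta(\pi,V)$ — being built from the Weil representation, whose central character on the $U(1)$-part is $\chi_v$, itself unramified at $v$ — has a well-understood restriction to the kernel of $\det$; concretely, the nonvanishing of $(\alpha\otimes\Theta(\pi,V))^{K(\varpi_w^k)}$ factors through the $U(1)$-action, and the group generated by $\det K_v(\varpi_w^k)$ together with $\det(U(V)(F_v))$-translates that fix the relevant vector is already all of $U(1)(\mathcal{O}_v)$, so $\alpha_{|\det K(\varpi_w^k)}=1$. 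I would phrase this via the decomposition of $U(V)(F_v)$-invariants under $K(\varpi_w^k)$ as a sum over the $\det$-orbits and note that $\Theta(\pi,V)$ is already $\det K_v(1)$-isotypic for the character $1$ (again from the unramified Weil-representation model), whence any twist $\alpha$ surviving must satisfy the asserted triviality.

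The main obstacle I expect is the careful bookkeeping of lattices and the metaplectic cocycle at the place $v$: one must check that with $\mathfrak{f}_V=1$ the splitting $i_\chi$ restricted to $K_v^W(1)\times K_v(1)$ is the trivial one and that the self-dual lattice model of $\om_{\chi,v}$ genuinely has its spherical line fixed by this compact, including the subtle dependence on whether $v$ is inert, split, or ramified in $E$ — but since $v\notin T_{V,\psi,\chi}$ excludes ramification (and $v\nmid 2$), these are exactly the cases treated by Kudla \cite{K} and Pan \cite{Pan1}, recalled in the paper's section \ref{scindage}. Once the unramified model is in hand, both bullets are short: the second is the statement that the theta lift of a non-spherical or ramified-$W$ datum has no $K_v(1)$-fixed vector, and the first is the statement that the $U(1)$-central character of that model is unramified so any compatible automorphic twist $\alpha$ is trivial on $\det K(\varpi_w^k)$.
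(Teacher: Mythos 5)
Your general picture — work locally at $v\notin T_{V,\psi,\chi}$, use the self-dual lattice model of the Weil representation and Pan's computation of the metaplectic cocycle — matches the paper's setup, and your treatment of the first bullet (deducing triviality of $\alpha$ on $\det K(\varpi_w^k)$ from the fact that, in the unramified lattice model, Pan's character $\lambda_\chi$ restricted to the relevant compact is controlled by the unramified $\chi$) is in the same cursory spirit as the paper, which simply refers to a ``relecture minutieuse'' of Waldspurger. Likewise, your appeal to the unramified theta correspondence to get $\pi_v$ unramified once $W$ and $\Theta_\chi(\pi_v,V)$ are known to be unramified coincides with the paper's citation of Howe's theorem~7.1.b.

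However, there is a genuine gap in your argument for the hardest part of the second bullet, namely that $\Theta_\chi(\pi_v,V)^{K_v(1)}\neq 0$ forces $W$ itself to be unramified. You write that ``the only way a $K_v^W(1)$-invariant-producing datum can occur is with $L_W$ self-dual,'' and assert this follows ``by the description of the Weil representation''; but the space $S^{1\times K_v(1)}$ is large (it is $\om(\mathcal{H}_W)$-generated by lattice vectors, per Pan, hence infinite-dimensional and nonzero whether or not $W$ is ramified), so its mere non-vanishing cannot constrain $W$. What has to be shown — and what the paper devotes the entire Section~\ref{pararam} to — is that the image of $\pi_v\otimes S^{1\times K_v(1)}$ in the coinvariant quotient $(\pi_v\otimes S)_{U(W)}$ is zero whenever $W$ is ramified. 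The paper does this by reducing, via Pan's structure theorem
$S^{G^+_{L,0}}=\om(\mathcal{H}_W)\sum_{L'\ \text{bon}}S^{A(L,L')}$
and the identification of $S^{A(L,L')}$ with the Weil representation $\mathbb{S}$ of a finite Heisenberg group, to the statement that the finite Weil representation $\mathbb{S}_i$ of $\mathrm{Sp}(\mathbf{b}_i)$ has no $U(\mathbf{l})$-fixed vectors as soon as $\mathbf{a}^*\neq 0$; that vanishing is itself proved by an induction through mixed models and a support/Fourier-transform argument. None of this is a formal consequence of the unramified theta correspondence (which gives the implication (unramified $W,\pi$) $\Rightarrow$ (unramified $\theta(\pi)$), not its ``converse'' as needed here), so the step you summarize in a clause is in fact the core of the lemma and needs a full proof.
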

Le premier s'obtient par une relecture minitieuse des travaux de
Waldspurger. Le fait que $W$ et $\Theta_\chi(\pi,V)$ non ramifi\'e implique $\pi$ non
ramifi\'e est du \`a Howe (\cite{Howe}, th\'eor\`eme 7.1.b). Le fait que
$W$ soit non ramifi\'e est prouv\'e dans la partie \ref{pararam}. Ce lemme
montre que:
\begin{itemize}
\item le nombre de caract\`ere $\alpha$ apparaissant dans la somme (\ref{in1})
est de l'ordre de $\text{Cl}(U(1),\det K_f)$,
\item le nombre de $W$ apparaissant dans
$H^R_{\theta,\mathfrak{q}}(X(\mathfrak{c}))$ est fini born{\'e} par le
nombre d'espace $\epsilon$-h{\'e}rmitien v{\'e}rifiant: $W$ est non
ramifi{\'e} si $\mathfrak{c}_v=1$ et $v\not\in T_{V,\psi,\chi}$ et la signature de $W$ {\`a} l'infini est det\'ermin{\'e}e par $\mathfrak{q}$.
\end{itemize}
On fixe une place inerte non archim\'edienne $v_0$ (on note $w_0$ la place de $E$ au dessus de $v_0$) et $\mathfrak{c}$ un id\'eal de $E$ premier \`a $v_0$. On a:
\begin{multline}
\sum_W\sum_{\pi=\pi_\infty\otimes\pi_f}\dim \theta(\pi_f,V)^{K(\mathfrak{c}\varpi_{w_0}^k)}\ll\\
\dim H_{\theta,\mathfrak{q}}^R(X^0(\mathfrak{c}\varpi_{w_0}^k))\ll\\
\sum_W\sum_{\pi=\pi_\infty\otimes\pi_f}\dim \theta(\pi_f,V)^{K(\mathfrak{c}\varpi_{w_0}^k)}
\end{multline}
(les constantes d\'ependent du choix de $\mathfrak{c}$ et de $v$).
\begin{theo}\label{maj} Soit $\pi$ une repr{\'e}sentation unitaire de
  $U(W)_{v_0}$ alors pour $k$ suffisamment grand: 
$$ \dim \Theta(\pi,V)^{K(\varpi_{w_0}^k)}\leq (\mathbb{N}w_0)^{\frac{k+1}{2}mn} \dim \pi^{K(\varpi_{w_0}^{k+1})}$$
\end{theo}
($\mathbb{N}w_0$ est le cardinal du corps r\'esiduel de $E_{w_0}$).
Pour les places divisant $2$ nous prouvons \'egalement un r{\'e}sultat de ce type mais non explicite:
\begin{prop}\label{Pfin}
Pour toute place finie $u$ de $F$ et tout sous-groupe compact ouvert $K_u$
de $U(V_u)$. Il existe un sous-groupe compact ouvert $K'_u$ de $U(W_u)$ et une constante $C>0$ tels que pour toutes repr{\'e}sentations irr{\'e}ductibles $\pi_u$ de $U(W_u)$: $\dim \Theta(\pi_u,V)^{K_u}\leq C\dim\pi_u^{K'_u}$.
\end{prop}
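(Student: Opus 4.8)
Le plan est de ramener l'\'enonc\'e \`a une propri\'et\'e structurelle de la repr\'esentation de Weil, puis \`a un argument formel d'alg\`ebre de Hecke. Notons $S_u$ l'espace de $\om_\chi$, vue comme repr\'esentation lisse de $U(W_u)\times U(V_u)$ (mod\`ele de Schr\"odinger sur un lagrangien $\mathbf Y_u$ de $\mathbf W\otimes_F F_u$). Comme $K_u\subset U(V_u)$ commute \`a $U(W_u)$ et que le foncteur des invariants sous un groupe compact est exact, on tire de $\Theta(\pi_u,V)=(\pi_u\otimes\om_\chi)_{U(W_u)}$ un isomorphisme canonique $\Theta(\pi_u,V)^{K_u}\simeq(\pi_u\otimes S_u^{K_u})_{U(W_u)}$, o\`u $S_u^{K_u}$ est la repr\'esentation de $U(W_u)$ form\'ee des vecteurs de $S_u$ fixes par $K_u$. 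Puisque $\Theta(\pi_u,V)$ est de longueur finie, cet espace est de dimension finie.

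L'\'etape clef sera alors d'\'etablir: $(\star)$ il existe un sous-groupe compact ouvert $K'_u$ de $U(W_u)$ tel que tout sous-quotient irr\'eductible du $U(W_u)$-module $S_u^{K_u}$ admette un vecteur non nul fixe par $K'_u$ (autrement dit $S_u^{K_u}$ est admissible et ses sous-quotients sont de niveau born\'e en fonction de $K_u$). Admettant $(\star)$, on conclut formellement. Posons $C=\dim S_u^{K_u\times K'_u}$, fini. Si $\pi_u^{K'_u}=0$, alors $\pi_u^\vee$, de m\^eme niveau que $\pi_u$, n'est pas sous-quotient de $S_u^{K_u}$ d'apr\`es $(\star)$, donc $(\pi_u\otimes S_u^{K_u})_{U(W_u)}=0$ par d\'evissage le long d'une suite de composition (on utilise que $S_u^{K_u}$, engendr\'e par ses vecteurs fixes par $K'_u$, est de longueur finie). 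Si $\pi_u^{K'_u}\neq 0$, l'exactitude du foncteur des invariants sous $K'_u$ jointe \`a $(\star)$ majore la multiplicit\'e de $\pi_u^\vee$ dans $S_u^{K_u}$ par $\dim S_u^{K_u\times K'_u}/\dim\pi_u^{K'_u}$, d'o\`u $\dim\Theta(\pi_u,V)^{K_u}\leq C\,\dim\pi_u^{K'_u}$, qui est la majoration souhait\'ee.

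Pour $(\star)$, qui est le point d\'elicat, je r\'ealiserais $\om_\chi$ dans un mod\`ele mixte adapt\'e \`a une d\'ecomposition de Witt de $V_u$ et \`a un r\'eseau presque autodual de $V_u$ stabilis\'e, \`a un sous-groupe de congruence pr\`es, par $K_u$: l'action de $U(V_u)$ y est explicite (g\'eom\'etrique le long d'un parabolique maximal, transform\'ees de Fourier partielles pour les \'el\'ements de Weyl), et la d\'ecomposition d'Iwasawa ram\`ene la $K_u$-invariance \`a des conditions de support et de r\'egularit\'e sur les fonctions de $\mathcal S(\mathbf Y_u)$. L'analyse de la repr\'esentation de Weil due \`a Waldspurger (\cite{Wald0}, \cite{Wald}) -- calcul des modules de Jacquet et estimation asymptotique des coefficients -- devrait alors fournir que les sous-quotients de $S_u^{K_u}$ sont de niveau born\'e: une irr\'eductible $\sigma$ de $U(W_u)$ n'y appara\^it que si son rel\`evement $\Theta(\sigma,V)$ poss\`ede un sous-quotient fixe par $K_u$, et la correspondance theta vers le groupe plus grand $U(V_u)$ ne faisant (qualitativement) pas baisser le niveau, ceci borne le niveau de $\sigma$; la finitude du nombre de r\'eseaux pertinents (partie \ref{pararam}) intervient \'egalement. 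L'obstacle principal est pr\'ecis\'ement l\`a: on ne dispose pas d'une relation num\'erique pr\'ecise entre le niveau de $\sigma$ et celui de $\Theta(\sigma,V)$, seulement du fait qualitatif que le second cro\^it avec le premier -- c'est ce fait que Waldspurger a expliqu\'e \`a l'auteur -- de sorte que l'on obtient l'existence de $K'_u$ et de $C$ sans en contr\^oler la taille, d'o\`u la forme non explicite de l'\'enonc\'e.
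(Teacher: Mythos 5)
Votre r\'eduction initiale --- l'isomorphisme $\Theta(\pi_u,V)^{K_u}\simeq(\pi_u\otimes S_u^{K_u})_{U(W_u)}$, obtenu en commutant invariants sous $K_u$ et coinvariants sous $U(W_u)$ --- est correcte et co\"incide avec celle du papier. Mais la d\'eduction qui suit s'en \'ecarte et pr\'esente des lacunes r\'eelles.

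Le papier ne passe ni par $(\star)$ ni par une suite de composition de $S_u^{K_u}$. Le lemme clef, d\^u \`a Waldspurger pour $v\nmid 2$ (th\'eor\`eme \ref{thmW1} et sa variante avec les r\'eseaux $A,B$) et \'etabli par adaptation dans la partie \ref{finitude2} pour la caract\'eristique r\'esiduelle $2$, est que $S_u^{K_u}=\om_\chi(\mathcal{H}_W)S_0$ pour un sous-espace $S_0$ \emph{explicite} de dimension finie (fonctions \`a support dans un r\'eseau du mod\`ele latticiel). On choisit alors $K'_u$ fixant $S_0$, et la proposition \ref{smaj} --- une manipulation formelle d'alg\`ebre de Hecke faisant passer $\mathcal{H}_W$ de $S$ vers $\pi$ \`a travers la projection sur les coinvariants --- donne directement la surjection $V_{\pi_u}^{K'_u}\otimes S_0\twoheadrightarrow\Theta(\pi_u,V)^{K_u}$, d'o\`u la majoration avec $C=\dim S_0$, sans aucune consid\'eration d'admissibilit\'e ni de longueur finie.

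Votre argument, m\^eme en admettant $(\star)$, contient des affirmations non justifi\'ees. Vous posez $C=\dim S_u^{K_u\times K'_u}$ en d\'eclarant cette dimension finie; c'est l'admissibilit\'e de la repr\'esentation de Weil restreinte \`a la paire duale, th\'eor\`eme non trivial que vous n'invoquez pas et qui ne d\'ecoule pas de $(\star)$ seul. Vous affirmez ensuite que $S_u^{K_u}$, {\og engendr\'e par ses vecteurs fixes par $K'_u$\fg}, est de longueur finie; or \^etre engendr\'e par des vecteurs fixes par un compact ouvert n'implique pas la longueur finie (penser \`a $\text{ind}_{K'}^{U(W_u)}1$): celle-ci requiert la g\'en\'eration finie de $S_u^{K_u}$ sur $\mathcal{H}_W$ --- le th\'eor\`eme de Waldspurger, qui est justement le contenu difficile --- conjointement \`a l'admissibilit\'e. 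Enfin $(\star)$ reste chez vous une heuristique; c'est pr\'ecis\'ement l'objet des lemmes \ref{lem:app1}, \ref{lem:app2} et de la proposition de la partie \ref{finitude2}, qui construisent explicitement des \'el\'ements de $U(W_u)$ r\'eduisant le support des fonctions de Schwartz. Notons aussi que pour $v\nmid 2$ la majoration du papier est enti\`erement explicite (th\'eor\`eme \ref{maj}), contrairement \`a votre remarque finale selon laquelle on ne disposerait d'aucune relation num\'erique pr\'ecise entre les niveaux.
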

Le th\'eor\`eme est prouv\'e dans la partie \ref{partie3}. La proposition est un corollaire du th\'eor\`eme pour $v\nmid 2$, pour une place au dessus de
$2$ la d{\'e}monstration du th\'eor\`eme 1.4 du chapitre 5 de \cite{MVW} s'adapte pour obtenir ce
r{\'e}sultat, nous en donnons une preuve dans la partie
\ref{finitude2}. Soit $T$ l'union de $T_{V,\psi,\chi}$ et des places
divisant $\mathfrak{c}$ le tout priv\'e de $v_0$. Soit $\mathfrak{c}'$ un id{\'e}al inversible en dehors de $T$ tel
que $K(\mathfrak{c}')$ v{\'e}rifie l'{\'e}nonc{\'e} de la
proposition \ref{Pfin} par rapport {\`a} $K(\mathfrak{c})$. Alors \`a partir
de (\ref{in1}) et en combinant avec les r\'esultats \ref{Nram} et \ref{Pfin} on obtient:
$$\dim H^R_{\theta,\mathfrak{q}}(X^0(\mathfrak{c}\varpi_{w_0}^k))\leq
C'\sum_{\pi}\dim\theta(\pi_{v_0})^{K(\varpi_{w_0}^k)}\times\dim\pi_T^{K(\mathfrak{c}')}\times\prod_{u\not\in
T\cup\{v\}}\dim \pi_u^{K_u(1)}$$
le dernier terme vaut $0$ si $\pi$ est ramifi{\'e} en une
place $u\not\in T\cup\{v_0\}$, et $1$ dans le cas contraire. En utilisant le th{\'e}or{\`e}me \ref{maj},
on a finalement que:
\begin{theo}\label{thmmin}
$$ \dim H_{W,\theta}^R(X^0(\mathfrak{c}\varpi_{w_0}^k))\ll \mathbb{N}\varpi_{w_0}^{\frac{k+1}{2}mn}m(\pi_{\mathfrak{q}},K(\mathfrak{c}'\varpi_{w_0}^{k+1}))
$$
\end{theo}
La multplicit\'e des s\'eries discr\`etes a \'et\'e \'etudi\'ee dans le paragraphe \ref{paramu}. On en d{\'e}duit une
majoration en termes de volume dans le paragraphe \ref{resultp}. Il est aussi int\'eressant de consid\'erer le cas d'une place d\'eploy\'ee
de $E$ (cela n'a de sens que dans le cas 2) alors il existe deux places conjugu\'ees $w$ et $\overline{w}$ au dessus de
$v$. En utilisant \cite{Min}, on obtient de la m\^eme fa\c{c}on le r\'esultat suivant:
$$\dim
H_{\theta,\mathfrak{q}}^R(X^0(\mathfrak{c}\varpi_{w_0}^k\varpi_{\overline{w_0}}^k))\ll
\mathbb{N}{w_0}^{kmn}\text{vol}\left(K(\varpi_{w_0}^{k}\varpi_{\overline{w_0}}^k)\right)$$

\subsection{Minoration de la partie $\theta$}
On suppose que l'hypoth\`ese (\ref{Weil}) est v\'erifi\'ee et que
$n>\frac{5m}{2}+1$. On se donne $v_0$ une place inerte n'appartenant pas \`a
$T_{V,\psi,\chi}$ et $W$ un espace h\'ermitien de type $W_\mathfrak{q}$ \`a
l'infini, non ramifi\'e en $v$, on donne une minoration de
$H_{W,\theta}^R(X^0(\mathfrak{c}\varpi_v^k))$ pour un $\mathfrak{c}$ assez
grand explicite. Nous donnons deux r\'esultats de minoration qui vont jouer le r\^ole de
la proposition \ref{Pfin} et du th\'eor\`eme \ref{maj}.
\begin{prop}\label{propmaj} Soit $u$ une place finie de $F$. Il exite une constante $c>0$ telle que pour tout $k$ suffisamment grand:
 $$\dim\theta(\pi_u,V)^{K(\varpi^{2k+c})}\geq \dim\pi_u^{K(\varpi^k)}$$
\end{prop}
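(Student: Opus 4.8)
The plan is to obtain the inequality $\dim\theta(\pi_u,V)^{K(\varpi^{2k+c})}\geq \dim\pi_u^{K(\varpi^k)}$ by constructing, from each $K(\varpi^k)$-fixed vector in $\pi_u$, an explicit nonzero $K(\varpi^{2k+c})$-fixed vector in the theta lift, and by checking that this construction is injective. The natural tool is the local theta correspondence realized inside the Weil representation $\omega_\chi = S_u$ on the mixed model attached to the (almost self-dual) lattice $L_V$ fixed in the paragraph preceding Theorem \ref{maj}. Recall that $\Theta_\chi(\pi_u,V)=(\pi_u\otimes\omega_\chi)_{U(W)(F_u)}$ and that $\theta(\pi_u,V)$ is its unique irreducible quotient. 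Given $0\neq \xi\in\pi_u^{K(\varpi^k)}$, I would pair it with a Schwartz function $\varphi\in S_u$ of small support, namely the characteristic function of a suitable $K(\varpi^{?})$-coset in the lattice model, to produce a vector $\xi\otimes\varphi$ whose image in the coinvariants is fixed by $K(\varpi^{2k+c})$ for a $c$ depending only on $V$, $W$, $\psi$, $\chi$ (and the ramification of $u$, but the constant is uniform in $\pi_u$ as required).

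First I would make precise the action of the congruence subgroups on the Weil representation. Using the splitting $i_\chi$ of Rao--Perrin--Kudla (recalled in the referenced \S\ref{scindage}) and Pan's results, the groups $K_V(\varpi^j)\subset U(V)(F_u)$ and $K_W(\varpi^j)\subset U(W)(F_u)$ act on $S_u$ in a way controlled by the lattices $L_V$, $L_W$ and $\psi$; the key point is the standard estimate that a vector of $S_u$ supported on a lattice of ``level'' $\varpi^a$ is fixed by the congruence subgroup of level $\varpi^{a+b_0}$, where $b_0$ is an absolute constant (the conductor $\mathfrak{f}$ of $\psi\circ\tr_{E/F}$ and the discriminant of $V$). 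Then, choosing $\varphi$ appropriately and using that $\xi$ is $K_W(\varpi^k)$-fixed, the pair $\xi\otimes\varphi$ descends to a $K_V(\varpi^{2k+c})$-fixed vector in $\Theta_\chi(\pi_u,V)$; composing with the projection to the irreducible quotient $\theta(\pi_u,V)$ we must check the image is still nonzero. Nonvanishing is where one invokes the rank hypotheses in force in this section ($n>\tfrac{5m}{2}+1$, Weil's criterion (\ref{Weil})), which place us in Li's stable range, so the theta lift $\Theta(\pi_u,V)\to\theta(\pi_u,V)$ is ``as nondegenerate as possible'' and in particular does not kill the vectors we have built.

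The remaining point is injectivity: the assignment $\xi\mapsto(\text{class of }\xi\otimes\varphi)$ must send a basis of $\pi_u^{K(\varpi^k)}$ to linearly independent vectors of $\theta(\pi_u,V)^{K(\varpi^{2k+c})}$. For this I would argue as in a local Rallis inner product computation: the inner product of two such theta lifts unfolds to an inner product in $\pi_u$ against a doubling-type local zeta integral, and in the stable range this integral is a nonzero constant (independent of the chosen $\varphi$, up to scaling), so the Gram matrix of the images equals, up to an invertible scalar matrix, the Gram matrix of the original vectors. This reduces injectivity to the nondegeneracy of the Hermitian form on $\pi_u$, which holds since $\pi_u$ is unitary. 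The hard part will be the bookkeeping of the exact congruence level: controlling how $K_V(\varpi^{2k+c})$ acts on the chosen Schwartz vector, and pinning down that the loss factor is of the shape $2k+c$ with $c$ uniform in $\pi_u$ — this is precisely the local analogue of the pointwise estimates in the Rallis inner product formula that the text announces will be recalled in \S\ref{SWF}, adapted to congruence-subgroup test vectors rather than spherical ones.
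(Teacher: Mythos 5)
Your overall strategy (test vectors $\xi\otimes\vhi$, local Rallis inner product, stable range hypotheses) is the same as the paper's, but there is a genuine gap at the heart of your argument: the injectivity step. You claim that the inner product of two lifts unfolds to a doubling zeta integral which ``in the stable range is a nonzero constant independent of the chosen $\vhi$'', so that the Gram matrix of the images is proportional to that of the original vectors. This is not true for an arbitrary Schwartz function: the local formula gives
$\left(p(\vhi_1\otimes v_1),p(\vhi_2\otimes v_2)\right)=c\int_{U(W)}(\om_\chi(g)\vhi_1,\vhi_2)(\pi(g)v_1,v_2)\,dg$,
and this integral mixes $v_1$ and $v_2$ through the integration over all of $U(W)$; it is not of the form $C\cdot(v_1,v_2)$ unless the matrix coefficient $(\om_\chi(g)\vhi_1,\vhi_2)$ is supported on a compact subgroup that fixes $v_1$ and $v_2$. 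For a generic ``characteristic function of a small coset'' this support is a neighbourhood of the stabilizer of the chosen point of $\mathbf{W}$ in $U(W)$, which need not be a congruence subgroup of level $\varpi^k$, and the resulting pairing on $\pi_u^{K(\varpi^k)}$ could well be degenerate.

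What the paper does, and what is missing from your proposal, is the explicit construction that makes this localization happen. Using the stable range one splits off a hyperbolic subspace $V_0\simeq W\oplus W^*$ of $V$, so that $S=S_0\otimes S_1$ with $S_0$ a model for $H(W\otimes V_0)$; in the Schr\"odinger model $S_0\simeq S(\text{End}(W))$ the action of $U(W)$ is linear, and the function $\vhi_k=1_{\text{id}_W+\varpi^k\text{End}(L_W)}$ satisfies exactly $(\om(g)\vhi_k,\vhi_k)=1_{K(\varpi^k)}(g)$. Plugging $\vhi_k\otimes\vhi$ into the inner product formula then collapses the integral to $K(\varpi^k)$, where all the data are invariant, yielding $(\vhi,\vhi')(v,v')$ on the nose; injectivity and nonvanishing follow simultaneously, with no separate appeal to the stable range for the projection to the irreducible quotient. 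The shift $2k+c$ then comes from checking that $\vhi_k$ lies in $S[\varpi^{-k}A]$, which is fixed by $K_V(\varpi^{2k+c})$. Without the identity $(\om(g)\vhi_k,\vhi_k)=1_{K(\varpi^k)}(g)$ your Gram-matrix argument does not go through, so the proposal as written does not constitute a proof.
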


\begin{theo}\label{thmmaj} Soit $\pi$ une repr\'esentation unitaire de $U(W)_{v_0}$. On a la minoration suivante pour $k$ pair:
$$
\dim\theta(\pi,V)^{K(\varpi_{w_0}^k)}\geq \dim\pi^{K(\varpi_{w_0}^{k})}\mathbb{N}{w_0}^{\frac{k}{2}(n-2m)m}
$$
\end{theo}
La preuve de ces deux r\'esultats locaux est donn\'ee dans la partie
\ref{min} (corollaire \ref{minGro} et \ref{minfin}).

Soit $T$, l'union de $T_{V,\psi,\chi}$ de $v$ et des places de ramification
de $W$. D'apr\`es la proposition pr\'ec\'edente, il existe deux id\'eaux $\mathfrak{c}$ et
$\mathfrak{c'}$ explicites de $E$
premier \`a $v$ et aux places n'appartenant pas \`a $T_{V,\psi,\chi}$ tels que
pour toutes places $t\in T_{V,\psi,\chi}$:
$$\dim\theta(\pi_t,V)^{K(\mathfrak{c})}\geq \dim\pi_t^{K(\mathfrak{c}')}$$

De plus on se souvient que si $u\not\in T$:
$$
\dim\theta(\pi_u,V)^{K(1)}=\left\{
\begin{array}{lll}
1&&\text{ si }\pi_u\text{ non ramifi\'ee.}\\
0&&\text{ sinon.}
\end{array}\right.
$$
On en d\'eduit donc que:
\begin{eqnarray*}
&&\dim H^R_{W,\theta}(X^0(\mathfrak{c}\varpi_{w_0}^k))\\
&\geq&\sum_\pi\dim\theta(\pi_{v_0},V)^{K(\varpi_{w_0}^k)}\dim\left((\otimes_{u\neq v_0}\pi_u)^{K(\mathfrak{c'})\times
K^T(1)}\right)
\end{eqnarray*}
En utilisant le th\'eor\`eme \ref{thmmaj}. On
en d\'eduit le th\'eor\`eme suivant:
\begin{theo}\label{thmm} On a pour tout $k$:
$$
\dim H^R_{W,\theta}(X^0(\mathfrak{c}\varpi_{w_0}^k))\gg m(\pi_\mathfrak{q},K(\mathfrak{c}'\varpi_{w_0}^k))\mathbb{N}{w_0}^{\frac{k}{2}(n-2m)m}$$
\end{theo}
Remarquons que pour obtenir le  r\'esultat pour tout $k$, il suffit de le faire pour le cas pair et d'utiliser les inclusions:
$$K(\varpi_0^{2k+2})\subset K(\varpi_0^{2k+1})\subset K(\varpi_0^{2k}).$$
Nous reformulons maintenant les th\'eor\`emes \ref{thmmin} et \ref{thmm} en termes de volume.
\subsection{Enonc\'e des th\'eor\`emes de minoration}\label{resultp}
Nous donnons le calcul asymptotique des volumes des compacts consid\'er\'es
pr\'ec\'edemment dans les diff\'erents cas. Posons:
\begin{eqnarray}\label{defe}
e_V&=&\left\{
\begin{array}{lll}
1&\text{ si }& V\text{  est antisym{\'e}trique}\\
-1&\text{ si }& V \text{ est sym{\'e}trique}\\
0&\text{ si }& V \text{ est hermitien ou anti-h\'ermitien}
\end{array}\right.
\end{eqnarray}
Soit $v$ une place finie de $F$ alors pour tout $\epsilon>0$:
$$(\mathbb{N}v)^{k\alpha(V)-\epsilon}\ll_\epsilon\text{vol}\left(K_V^0(\varpi_w^k)\right)\ll_\epsilon
(\mathbb{N}v)^{k\alpha(V)+\epsilon}$$
avec $\alpha(V)$ la dimension du groupe $SU(V)$, c'est \`a dire: 
\begin{eqnarray}
\alpha(V)=\left\{
\begin{array}{lll}
\frac{n(n+e)}{2}&\text{ dans le cas 1}\\
\frac{(n-1)(n+2)}{2}&\text{ dans le cas 2 et $v$ ramifi\'ee}\\
n^2-1&\text{ dans le cas 2.}
\end{array}\right.
\end{eqnarray}

On a donc la reformulation en termes de volume des th\'eor\`emes pr\'ec\'edents:
\begin{theo}\label{ppp} Pour une place inerte non ramifi\'ee $v$, et pour tout
$\epsilon>0$.

\item On a:
$$ \dim H_{\mathfrak{q},\theta}^R(X^0(\mathfrak{c}\varpi_{w}^k)) \ll_\epsilon \text{vol}\left(X^0(\mathfrak{c}\varpi_{w}^k)\right)^{\frac{\frac{1}{|e|+1}mn+\alpha(W)+1-|e|}{\alpha(V)}+\epsilon}$$
Et, sous l'hypoth\`ese (\ref{rangf}):
$$\dim H_{\mathfrak{q},\theta}^R(X^0(\mathfrak{c}\varpi_{w}^k))\gg_\epsilon\text{vol}\left(X^0(\mathfrak{c}\varpi_{w}^k)\right)^{\frac{\frac{1}{|e|+1}m(n-2m)+\alpha(W)+1-|e|}{\alpha(V)}-\epsilon}$$
\end{theo}
Par exemple, pour $O(n,1)$ c'est \`a dire pour la g\'eom\'etrie hyperbolique, on
obtient pour le $i^\text{i\`eme}$ nombre de Betti les exposants de
minoration et de majoration:
$$\frac{2i}{n}\left(1-\frac{(2i-1)}{n+1}\right)\text{ et }\frac{2i}{n}\left(1+\frac{(2i+1)}{n+1}\right).$$
Dans le cas $U(n,1)$, c'est \`a
dire pour la g\'eom\'etrie hyperbolique complexe, on obtient pour la
cohomologie holomorphe de degr\'e $i$ les exposants: 
$$\frac{i}{n-1}\left(1-\frac{i}{n-1}\right)\text{ et }\frac{i}{n+1}\left(1+\frac{i}{n+1}\right).$$

\section{Preuve de la majoration locale}\label{partie3}
Nous prouvons le th\'eor\`eme \ref{maj}. Le probl{\`e}me {\'e}tant local non archim{\'e}dien, nous supposons dor{\'e}navant, sans changer les notations, que $F,...$ sont des objets locaux et que la caract{\'e}ristique r{\'e}siduelle de $F$ est diff{\'e}rente de $2$. 
\subsection{Op{\'e}rateurs de Hecke}
 Soit $G$ un groupe alg{\'e}brique d{\'e}fini sur
$F$. On note $\mathcal{H}(G)$ les fonctions {\`a} support compact localement
constantes sur $G(F)$. Soit $dg$ une mesure de Haar sur
$G(F)$. L'op{\'e}ration de convolution:
\[\vhi'*\vhi(g)=\int_G\vhi'(gh^{-1})\vhi(h)dh\]
munit $\mathcal{H}(G)$ d'une structure
d'alg{\`e}bre {\`a} idempotents, on appelle cette alg\`ebre l'alg\`ebre de
Hecke de $G$. Soit $K$ un sous-groupe compact ouvert de $G(F)$, on note $\mathcal{H}(G,K)$ les
{\'e}l{\'e}ments de $\mathcal{H}(G)$ invariants {\`a} droite et {\`a} gauche par
$K$. L'alg\`ebre $\mathcal{H}(G,K)$ admet comme \'el\'ement neutre la fonction
$e_{K}$ qui est {\`a} une constante pr{\`e}s (d{\'e}pendante de la mesure de Haar) la
fonction caract{\'e}ristique de $K$ (on a donc $\mathcal{H}(G,K)=e_K\mathcal{H}(G)e_K$). Soit $(\pi,V_\pi)$ une repr{\'e}sentation irr{\'e}ductible lisse de $G(F)$, alors $\mathcal{H}(G)$ agit sur $\pi$ par la formule:
$$v\mapsto \pi(\vhi)(v)=\int_{G(F)}\vhi(g)\pi(g)v.$$
Nous notons $V_\pi^K$ les vecteurs de $V_\pi$ fixe sous l'action de $K$. L'idempotent $e_K$ de $\mathcal{H}$ est le projecteur sur l'espace $V_\pi^K$. En particulier $\mathcal{H}(G,K)$ agit sur $V_\pi^K$. Enfin, $\mathcal{H}(G)$ est muni d'une involution:
$$f\mapsto \widehat{f}(g)=f(g^{-1})$$
qui v\'erifie 
\begin{eqnarray}\label{convolution}\widehat{f*f'}=\widehat{f'}*\widehat{f}\end{eqnarray}
et pr{\'e}serve les espaces $\mathcal{H}(G,K)$. Nous noterons $\mathcal{H}_V$ (resp. $\mathcal{H}_W$) l'alg{\`e}bre de Hecke du groupe $U(V)$ (resp. $U(W)$). Nous allons calculer l'action des op{\'e}rateurs de Hecke sur les fonctions theta.
Soit $S$ un mod{\`e}le lisse de la repr{\'e}sentation de Weil de $\text{Sp}(\mathbf{W}_v)$. Rappelons que l'image de $\pi$ par la correspondance theta est l'unique quotient irr{\'e}ductible du module suivant:
$$\Theta(\pi,V)=(\pi\otimes S)_{U(W)}$$
o{\`u} la notation en indice d{\'e}signe les coinvariants sous l'action de $U(W)$. Nous notons $p$ la projection de $\pi\otimes S$ dans $\Theta(\pi,V)$. L'action des op{\'e}rateurs de Hecke sur les fonctions theta v{\'e}rifie
les relations suivantes:
\begin{lemm}\label{action0} Soit $h\in \mathcal{H}_W$, $h'\in \mathcal{H}_V$, $f\in\pi$ et $\vhi\in S$
\begin{itemize}
\item $p(\pi(h)f\otimes\vhi)=p(f\otimes\om(\widehat{h})\vhi)$
\item $h'\bullet p(f\otimes\vhi)=p(f\otimes \om(h')\vhi)$
\end{itemize}
\end{lemm}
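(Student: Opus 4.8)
The strategy is to unwind the definition of the theta correspondence as a space of coinvariants and to track how the Hecke algebras act through the Weil representation. Recall that $\Theta(\pi,V) = (\pi\otimes S)_{U(W)}$, so an element of $\Theta(\pi,V)$ is an equivalence class $p(f\otimes\vhi)$ with the relation $p(\om(g)f\otimes \om(g)\vhi)=p(f\otimes\vhi)$ for all $g\in U(W)(F)$ (here I write $\om$ also for the action of $U(W)$ on $\pi$ by abuse, meaning $\pi(g)$). The residual $U(V)$-action is $h'\bullet p(f\otimes\vhi)=p(f\otimes\om(h')\vhi)$ by construction, since $U(V)$ acts only on the $S$-factor; this is essentially the definition, but for the first bullet I must genuinely compute.

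\textbf{First bullet.} I would start from the left-hand side $p(\pi(h)f\otimes\vhi)$, write $\pi(h)f=\int_{U(W)}h(g)\,\pi(g)f\,dg$, and pull the integral through the (continuous, or in the smooth setting finite-sum) projection $p$:
\[
p(\pi(h)f\otimes\vhi)=\int_{U(W)}h(g)\,p\bigl(\pi(g)f\otimes\vhi\bigr)\,dg.
\]
Now use the defining relation of the coinvariants to move $g$ from the $\pi$-factor to the $S$-factor: $p(\pi(g)f\otimes\vhi)=p(f\otimes\om(g^{-1})\vhi)$. Substituting and changing variables $g\mapsto g^{-1}$ (the Haar measure is unimodular on $U(W)$) turns $h(g)$ into $h(g^{-1})=\widehat h(g)$, giving
\[
p(\pi(h)f\otimes\vhi)=\int_{U(W)}\widehat h(g)\,p\bigl(f\otimes\om(g)\vhi\bigr)\,dg
= p\bigl(f\otimes\om(\widehat h)\vhi\bigr),
\]
which is the claim. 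For the second bullet, $h'\in\mathcal{H}_V$ acts on $S$ through $\om(h')=\int h'(g)\om(g)\,dg$, and since $U(V)$ commutes with $U(W)$ inside $\mathrm{Mp}(\mathbf W)$ this descends to the coinvariants exactly as $h'\bullet p(f\otimes\vhi)=p(f\otimes\om(h')\vhi)$; here one just needs that passing to coinvariants is $U(V)$-equivariant and that $p$ intertwines the $S$-action with the $\Theta(\pi,V)$-action, both of which are immediate from the construction of $p$.

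\textbf{Expected main obstacle.} The only subtlety is justifying the interchange of the integral defining $\pi(h)$ (or $\om(h')$) with the projection $p$ and with the $U(W)$-averaging used to pass to coinvariants. In the smooth, admissible setting over a non-archimedean field this is harmless: $h$ has compact support and the relevant vectors are fixed by an open compact subgroup, so all integrals are finite sums and every map in sight is linear, making the interchange formal. I would state this once and not belabor it. The sign/inverse bookkeeping — that it is $\widehat h$ and not $h$ that appears, coming from the change of variables $g\mapsto g^{-1}$ together with the coinvariant relation — is the one computational point to get right, and the compatibility $\widehat{f*f'}=\widehat{f'}*\widehat f$ recorded in \eqref{convolution} is exactly what makes the two bullets consistent as an action of $\mathcal H_W$ (resp. $\mathcal H_V$).
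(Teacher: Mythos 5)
Your proof is correct and follows exactly the route the paper has in mind — the paper disposes of the lemma in one sentence (first bullet from the definition of coinvariants, second from $U(V)$-equivariance of $p$), and your argument is simply that sentence unpacked: the change of variables $g\mapsto g^{-1}$ is indeed where $\widehat h$ enters, and the interchange of $p$ with the integral is harmless in the smooth setting. Nothing to add.
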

Le premier point vient de la d{\'e}finition des coinvariants, le
deuxi{\`e}me de l'{\'e}quivariance de la projection $p$ sous l'action de $U(V)$. 
\subsection{Premi\`ere majoration}
Soit $K_V$ un compact de $U(V)$.
\begin{prop}\label{smaj}
Supposons qu'il existe un sous-espace de dimension finie $S_0\subset S$ tel que:
$$S^{1\times K_V}=\om_\chi(\mathcal{H}_W\times 1)S_0$$
Soit $K_W$ un compact de $U(W)$ qui fixe les {\'e}l{\'e}ments de $S_0$ (cela existe toujours). Alors:
$$\dim\Theta(\pi,V)^{K_V}\leq \dim S_0\times\dim\pi^{K_W}$$
\end{prop}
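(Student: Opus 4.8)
The plan is to exploit the two equivariance relations of Lemma~\ref{action0} to reduce the computation of $\Theta(\pi,V)^{K_V}$ to a statement about the finite-dimensional space $S_0$ together with the $K_W$-fixed vectors of $\pi$. First I would observe that $\Theta(\pi,V)$ is a quotient of $(\pi\otimes S)_{U(W)}$ via the projection $p$, so $\Theta(\pi,V)^{K_V}$ is spanned by elements of the form $e_{K_V}\bullet p(f\otimes\vhi)$ with $f\in\pi$, $\vhi\in S$. Using the second point of Lemma~\ref{action0}, $e_{K_V}\bullet p(f\otimes\vhi)=p(f\otimes\om_\chi(e_{K_V})\vhi)$, and $\om_\chi(e_{K_V})\vhi\in S^{1\times K_V}$. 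By hypothesis every vector of $S^{1\times K_V}$ is of the form $\om_\chi(h)\vhi_0$ for some $h\in\mathcal H_W$ and $\vhi_0\in S_0$. Hence every element of $\Theta(\pi,V)^{K_V}$ is a linear combination of elements $p(f\otimes\om_\chi(h)\vhi_0)$ with $f\in\pi$, $h\in\mathcal H_W$, $\vhi_0\in S_0$.

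Next I would push the Hecke operator $h$ across the tensor product using the first point of Lemma~\ref{action0}: $p(f\otimes\om_\chi(h)\vhi_0)=p(\pi(\widehat h)f\otimes\vhi_0)$, where $\widehat{\phantom{h}}$ is the involution of $\mathcal H_W$. Since $\pi(\widehat h)f$ ranges over all of $\pi$ as $f$ and $h$ vary, this shows $\Theta(\pi,V)^{K_V}$ is spanned by the elements $p(f'\otimes\vhi_0)$ with $f'\in\pi$ and $\vhi_0\in S_0$. Now fix a basis $\vhi_1,\dots,\vhi_d$ of $S_0$ with $d=\dim S_0$. Then $\Theta(\pi,V)^{K_V}$ is spanned by $\{p(f'\otimes\vhi_j)\}$, so its dimension is at most $d$ times the dimension of the span of $\{p(f'\otimes\vhi_j):f'\in\pi\}$ for a single $j$, which is at most $\dim\pi$ — but this is not yet the sharp bound; I must bring in $K_W$.

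To get the factor $\dim\pi^{K_W}$ rather than $\dim\pi$, I would use that $K_W$ fixes $S_0$ pointwise, so $\om_\chi(e_{K_W})\vhi_j=\vhi_j$, and then $p(f'\otimes\vhi_j)=p(f'\otimes\om_\chi(e_{K_W})\vhi_j)=p(\pi(\widehat{e_{K_W}})f'\otimes\vhi_j)=p(\pi(e_{K_W})f'\otimes\vhi_j)$, using that $e_{K_W}$ is a self-adjoint idempotent, $\widehat{e_{K_W}}=e_{K_W}$. Since $\pi(e_{K_W})$ is the projection onto $\pi^{K_W}$, we conclude $\Theta(\pi,V)^{K_V}$ is spanned by $\{p(f_0\otimes\vhi_j):f_0\in\pi^{K_W},\ 1\le j\le d\}$, a set of cardinality at most $d\cdot\dim\pi^{K_W}$. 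Hence $\dim\Theta(\pi,V)^{K_V}\le\dim S_0\cdot\dim\pi^{K_W}$, as claimed. The only genuinely delicate point is making sure the hypothesis $S^{1\times K_V}=\om_\chi(\mathcal H_W\times 1)S_0$ is used correctly, i.e.\ that $\om_\chi(e_{K_V})\vhi$ indeed lands in $S^{1\times K_V}$ (clear, as $e_{K_V}$ is the projector onto $K_V$-invariants in any smooth representation, applied here to the $U(V)$-action on $S$) and that the existence of a compact $K_W$ fixing the finite-dimensional $S_0$ is automatic (it is, by smoothness of the Weil representation restricted to $U(W)$). Everything else is a formal manipulation with the two relations of Lemma~\ref{action0} and the idempotents $e_{K_V},e_{K_W}$; I expect no serious obstacle beyond bookkeeping of which group each Hecke algebra acts on.
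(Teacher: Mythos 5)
Your proof is correct and follows essentially the same strategy as the paper's: apply $e_{K_V}$, use the hypothesis to reduce to $S_0$, and push Hecke operators (in particular $e_{K_W}$) across the tensor product via Lemma~\ref{action0} together with $\widehat{e_{K_W}}=e_{K_W}$ to land in $\pi^{K_W}\otimes S_0$. The only difference is cosmetic: you perform the Hecke transfer in two stages (first $\widehat h$, then $e_{K_W}$) where the paper combines $\mathcal H_W e_{K_W}$ in a single step.
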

\begin{proof}
En appliquant l'idempotent $ e_{K_V}$ {\`a} la surjection $U(V)$-{\'e}quivariante $p$ on obtient que:
$$\pi\otimes S^{1\times K_V}\twoheadrightarrow \Theta(\pi,V)^{K_V}$$
(le symbole ''$\twoheadrightarrow$'' voulant dire se surjecte). L'hypoth{\`e}se du lemme montre que: 
$$\pi\otimes \om_\chi(\mathcal{H}_W\times 1)S_0\twoheadrightarrow \Theta(\pi,V)^{K_V}$$
Comme les vecteurs de $S_0$ sont fixes sous l'action de $K_W$ on a:
$$\om_\chi(\mathcal{H}_W\times 1)S_0=\om_\chi(\mathcal{H}_We_{K_W}\times 1)S_0$$
Le premier point du lemme \ref{action0}, l'\'egalit\'e (\ref{convolution}) et le fait que $\widehat{e_{K_W}}=e_{K_W}$ montrent que 
\begin{eqnarray*}
p(V_\pi\otimes \om_\chi(\mathcal{H}_We_{K_W}\times 1)S_0)&=&p(\pi(\widehat{e_{K_W}}\mathcal{H}_W)V_\pi\otimes S_0)\\
&=& p(V_\pi^{K_W}\otimes S_0)
\end{eqnarray*}
On a donc une surjection:
$$V_\pi^{K_W}\otimes S_0\twoheadrightarrow \Theta(\pi,V)^{K_V}.$$
\end{proof}
\subsection{R{\'e}sultats de Waldspurger}
Rappelons que la repr{\'e}sentation de Weil avant d'{\^e}tre une repr{\'e}sentation de $\text{Mp}(\mathbf{W})(\A)$ est l'unique repr{\'e}sentation irr{\'e}ductible de caract{\`e}re central $\psi$ des points ad{\'e}liques du groupe de Heisenberg de $\mathbf{W}$:
$$H(\mathbf{W})=\{(w,t)\ |w\in\mathbf{W}\text{ et }t\in F\}$$
muni de la multiplication $(w,t)(w',t')=(w+w',t+t'+\frac{1}{2}\langle w,w'\rangle)$.
On note $\mathfrak{f}_\psi$ le conducteur de $\psi$, notons $\mathbf{f}$ le
corps r\'esiduel de $F$. Rappelons que la notion de dualit{\'e} dans les
r{\'e}seaux d{\'e}pend d'un choix d'id{\'e}al. Nous avons d{\'e}fini
$\mathfrak{f}$ (resp. $\mathfrak{f}_\psi$) comme le conducteur de
$\psi\circ\tr_E$ (resp. $\psi$) et choisi deux id{\'e}aux $\mathfrak{f}_V$
et $\mathfrak{f}_W$ v{\'e}rifiant
$\mathfrak{f}_V\mathfrak{f}_W=\mathfrak{f}$. L'espace vectoriel $\mathbf{W}=W\otimes_E V$
(resp. $W$,$V$) est muni d'une forme symplectique
(resp. $\pm\eta$-hermitienne) et nous consid{\'e}rons la dualit{\'e}
par rapport {\`a} $\mathfrak{f}_\psi$
(resp. $\mathfrak{f}_V$,$\mathfrak{f}_W$). On v{\'e}rifie que si $L_1$
(resp. $L_2$) est un r{\'e}seau de $W$ (resp. $V$), on a $(L_1\otimes_\mathcal{O}
L_2)^*=L_1^*\otimes_\mathcal{O} L_2^*$. Rappelons que nous avons {\'e}galement fix{\'e} des r{\'e}seaux presque autoduaux $L_V$ et $L_W$. Supposons dans un premier temps que $L_V$ et $L_W$ sont autoduaux alors $L=L_W\otimes_{\mathcal{O}} L_V$ l'est aussi. Soit $H(L)\subset H(\mathbf{W})$ les {\'e}l{\'e}ments de la forme $(l,t)$ avec $l\in L$ l'application $\psi_L(l,t)=\psi(t)$ est un caract{\`e}re de $H(L)$ et la repr{\'e}sentation:
$$\text{ind}_{H(L)}^{H(\mathbf{W})}\psi_L$$ 
est un mod{\`e}le de la repr{\'e}sentation de Weil appel{\'e} mod{\`e}le latticiel. Cet espace est isomorphe {\`a}:
$$S(\mathbf{W}//L)=\{f\in S(\mathbf{W})|\ f(w+l)=\psi(-\frac{1}{2}B(l,w))f(w)\}$$
muni de l'action du groupe de Heisenberg donn{\'e}e par:
\begin{eqnarray}\label{rho}\rho(w)f(w')=\psi(\frac{1}{2}B(w',w))f(w+w')\end{eqnarray} 
L'int{\'e}r{\^e}t de ce mod{\`e}le par rapport {\`a} un mod{\`e}le de Schr{\"o}dinger est que l'action du compact maximal $K(L_V)$ de $U(V)$ est tr{\`e}s simple et donn{\'e}e par:
\begin{eqnarray}\label{K}\om_\chi(k)\vhi(x)=\lambda_\chi(k)\vhi(k^{-1}x).\end{eqnarray}
o\`u $\lambda_\chi$ est un caract\`ere de $K(L_V)$ calcul\'e par Pan dans
\cite{Pan1} (cf. partie \ref{scindage}).
On note $S[r]$ les fonctions de $S(\mathbf{W}//L)$ {\`a} support dans
$\frac{1}{\varpi^{-r}}L$. Waldspurger d{\'e}montre dans \cite{MVW} sous l'hypoth{\`e}se suppl{\'e}mentaire que $E/F$ est non ramifi{\'e} que:
\begin{theo}\label{thmW1} On a si $k$ est grand que le conducteur de $\chi$:
$$
\begin{array}{lll}
S^{1\times K(\varpi^k)}&=&\om(\mathcal{H}_W\times 1)S[\frac{k}{2}]\text{ si $k$ pair}\\
S^{1\times K(\varpi^k)}&=&\om(\mathcal{H}_W\times 1)S[\frac{k+1}{2}]^{1\times K(\varpi^k)}\text{ si $k$ impair}
\end{array}
$$
\end{theo}
En utilisant le fait que si $k$ est pair $S[k]$ est fixe par l'action de
$K_W(\varpi^{2k})$, la proposition \ref{smaj} donne le th\'eor\`eme
\ref{maj} dans le cas non ramifi\'e. D\'emontrons ce fait. Soit $s\in S[k]$, le support de $s$ est bien
stable par $K_W(\varpi^{2k})$. Il suffit donc de prouver que si $w\in
\frac{1}{\varpi^k}L$, on a bien $\om(k)s(w)=s(w)$. Comme
$k^{-1}w=(k^{-1}-1)w+w$ avec le premier terme de la somme qui appartient \`a
$L$, on a d'apr\`es (\ref{rho}):
\begin{eqnarray*}
\om(k)s(w)=\psi(\frac{1}{2}B((k^{-1}-1)w,w)s(w)
\end{eqnarray*}La condition $k\in K_W(\varpi^{2k})$ est donc la condition qui assure que
$B((k^{-1}-1)w,w)\in \mathfrak{f}_\psi$. En g{\'e}n{\'e}ral quand $L_V$ et $L_W$ ne sont pas autoduaux, on introduit:
$$A=L_W^*\otimes L_V\cap L_W\otimes L_V^*\text{ et }B=L_W^*\otimes L_V+L_W\otimes L_V^*$$
On a $B=A^*$ et $\varpi B\subset A\subset B$. On pose $\mathbf{b}=B/A$ qui est muni naturellement par r{\'e}duction d'une forme symplectique non d{\'e}g{\'e}n{\'e}r{\'e}e sur $\mathbf{f}$. On choisit (c'est possible) un r{\'e}seau autodual $L$ de $\mathbf{W}$ qui v{\'e}rifie les inclusions:
$$A\subset L\subset B$$ 
On remarque que cela revient {\`a} choisir un sous-espace isotrope maximal de $\mathbf{b}$. Le probl{\`e}me du mod{\`e}le latticiel de la partie pr{\'e}c{\'e}dente est que le stabilisateur de $L$ ne contient pas $K(L_V)\times K(L_W)$, on change donc le mod{\`e}le pour avoir {\`a} nouveau une formule explicite de l'action de $K(L_V)\times K(L_W)$. On a:
$$\text{ind}_{H(L)}^{H(\mathbf{W})}\psi_L=\text{ind}_{H(B)}^{H(\mathbf{W})}\text{ind}_{H(F)}^{H(B)}\psi_L$$
L'induite $\text{ind}_{H(F)}^{H(B)}\psi_L$ est un mod{\`e}le de la repr{\'e}sentation du groupe de Heisenberg fini $H(\mathbf{b})$ que l'on notera $(\rho,\mathbb{S})$. Le mod{\`e}le est alors {\'e}gal {\`a}:
$$S(\mathbf{W},\mathbb{S})=\{f:\mathbf{W}\rightarrow \mathbb{S}|\ f(w+b)=\psi(-\frac{1}{2}B( b,w))\rho(b)f(w)\}$$
L'action de $H(\mathbf{W})$ est toujours donn{\'e} par la formule (\ref{rho}). L'avantage de ce mod{\`e}le est que la repr{\'e}sentation de Weil de $K(L_V)$ est alors explicitement donn{\'e}e par la formule si $k\in K(L_V)$:
$$\om_\chi(k)\vhi(x)=\lambda_\chi(k)\rho(k)\vhi(k^{-1}x)$$
o{\`u} $\rho(k)$ d{\'e}signe la repr{\'e}sentation de Weil de
$\text{Sp}(\mathbf{b})$. On note pour un r\'eseau $L'\subset \mathbf{W}$
$S[L']$ les fonctions de $S(\mathbf{W},\mathbb{S})$ \`a support dans
$L$. 
\paragraph*{Remarque}
On dira plus g\'en\'eralement qu'un r\'eseau est bon si:
$$\varpi L^*\subset L\subset L^*$$
La construction pr\'ec\'edente s'\'etend aussi si on suppose simplement que $L_V$
et $L_W$ sont bons plut\^ot que presque autoduaux. Nous nous servirons de ce
fait dans la partie \ref{pararam}.

Waldspurger d{\'e}montre alors dans l'article \cite{Wald} le r\'esultat
suivant:
\begin{theo} On a pour $k$ suffisamment grand:
\begin{itemize}
\item Si k pair,
$$S^{1\times K(\varpi^k)}=\om(\mathcal{H}_W\times 1)S[\varpi^{-\frac{k}{2}}B]^{1\times K(\varpi^k)}$$
\item Si k impair,
$$S^{1\times K(\varpi^k)}=\om(\mathcal{H}_W\times 1)S[\varpi^{-\frac{k+1}{2}}A]^{1\times K(\varpi^k)}$$
\end{itemize}
\end{theo}
$S[\varpi^{-k}B]$ est fixe par
$K_W(\varpi^{2k+1})\times 1$ et $S[\varpi^{-k}A]$ est fixe par
$K_W(\varpi^{2k})\times 1$. Ainsi en utilisant \`a nouveau la proposition
\ref{smaj}, on d\'eduit le th\'eor\`eme \ref{thmmin}:
\begin{coro}\label{majR}Pour $k$ suffisamment grand:
 $$\dim\Theta(\pi,V)^{K(\varpi_w^k)}\leq (\mathbb{N}w)^{\frac{k+1}{2}\dim V\dim W}\dim\pi^{K(\varpi_w^{k+1})}$$
\end{coro}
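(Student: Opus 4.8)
The plan is to deduce the corollary directly from the theorem of \cite{Wald} quoted just above together with Proposition \ref{smaj}, the only extra input being an elementary estimate for the dimension of a space of $\mathbb{S}$-valued functions supported on a lattice. We keep the local notation of the section; since the relevant place $w$ of $E$ is inert, $E_w/F_v$ is unramified and we may and do take $\varpi=\varpi_w$.

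First I would fix $k$ large enough for the theorem of \cite{Wald} to apply and set $K_V=K(\varpi^k)$, together with
\[S_0=S[\varpi^{-k/2}B]^{1\times K(\varpi^k)}\ \ (k\text{ even}),\qquad S_0=S[\varpi^{-(k+1)/2}A]^{1\times K(\varpi^k)}\ \ (k\text{ odd}).\]
In either case that theorem reads $S^{1\times K_V}=\om(\mathcal{H}_W\times 1)S_0$ with $S_0$ finite dimensional, so the hypothesis of Proposition \ref{smaj} is satisfied by $S_0$. Recalling that $S[\varpi^{-j}B]$ is fixed by $K_W(\varpi^{2j+1})\times 1$ and $S[\varpi^{-j}A]$ by $K_W(\varpi^{2j})\times 1$, taking $j=k/2$, resp.\ $j=(k+1)/2$, shows that the ambient space $S[\cdot]\supset S_0$, hence $S_0$ itself, is fixed by $K_W:=K_W(\varpi^{k+1})$. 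Proposition \ref{smaj} then gives
\[\dim\Theta(\pi,V)^{K(\varpi_w^k)}\leq \dim S_0\cdot\dim\pi^{K(\varpi_w^{k+1})},\]
so that it remains to establish $\dim S_0\leq(\mathbb{N}w)^{\frac{k+1}{2}\dim V\dim W}$.

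For this I would bound $\dim S[L']$ for $L'=\varpi^{-k/2}B$ and for $L'=\varpi^{-(k+1)/2}A$ separately. In both cases $B\subset L'$ (for the second, because $\varpi B\subset A$ and $k$ is large), so $L'$ is a union of $B$-cosets; an element of $S(\mathbf{W},\mathbb{S})$ supported on $L'$ is then freely determined by its $\mathbb{S}$-valued restriction to a system of representatives of $L'/B$, whence $\dim S[L']=[L':B]\cdot\dim\mathbb{S}$. Here $\dim\mathbb{S}=|\mathbf{f}|^{\frac12\dim_{\mathbf{f}}\mathbf{b}}$ with $\dim_{\mathbf{f}}\mathbf{b}=\dim_{\mathbf{f}}(B/A)\leq\dim_{\mathbf{f}}(B/\varpi B)=\dim_F\mathbf{W}$, while $[\varpi^{-k/2}B:B]=|\mathbf{f}|^{\frac{k}{2}\dim_F\mathbf{W}}$ and $[\varpi^{-(k+1)/2}A:B]=|\mathbf{f}|^{\frac{k+1}{2}\dim_F\mathbf{W}}|\mathbf{b}|^{-1}$. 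In both parities the product is therefore at most $|\mathbf{f}|^{\frac{k+1}{2}\dim_F\mathbf{W}}$, the factor $|\mathbf{b}|^{-1}$ in the odd case and the surplus contained in $\dim\mathbb{S}$ in the even case each absorbing the discrepancy. Finally $\dim_F\mathbf{W}=[E:F]\dim V\dim W$ and $|\mathbf{f}|^{[E:F]}=\mathbb{N}w$ (as $w$ is inert, or $E=F$), so indeed $\dim S_0\leq(\mathbb{N}w)^{\frac{k+1}{2}\dim V\dim W}$, which finishes the proof.

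The substance of the argument is carried entirely by the two cited theorems of Waldspurger, which I would treat as black boxes; the only delicate bookkeeping is the last step, namely checking that the passage between the ``even'' lattice $B$ and the ``odd'' lattice $A$, combined with the Heisenberg factor $\dim\mathbb{S}$, produces no loss beyond the stated exponent, and that the residue field $\mathbf{f}$ of $F$ is correctly converted into the norm $\mathbb{N}w$ in both case $1$ ($E=F$) and case $2$ ($[E:F]=2$).
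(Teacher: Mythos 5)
Your argument is exactly the paper's: invoke the quoted theorem of Waldspurger to produce the finite-dimensional generating space $S_0$ (namely $S[\varpi^{-k/2}B]^{1\times K(\varpi^k)}$ or $S[\varpi^{-(k+1)/2}A]^{1\times K(\varpi^k)}$), note it is fixed by $K_W(\varpi^{k+1})$, and apply Proposition \ref{smaj}; the paper leaves the bound $\dim S_0\leq(\mathbb{N}w)^{\frac{k+1}{2}mn}$ implicit, and your lattice-index computation $\dim S[L']=[L':B]\dim\mathbb{S}$ with $\dim\mathbb{S}=|\mathbf{b}|^{1/2}$ fills it in correctly in both parities.
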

Il est naturel de se poser la question de savoir si on peut d{\'e}montrer qu'une
partie des fonctions de $S[\frac{r}{2}]$ s'injecte dans $\theta(\pi,V)^{K(\varpi_w^r)}$. Nous
prouvons un r{\'e}sultat de ce type dans la partie suivante.

\section{Formule du produit scalaire de Rallis}\label{SWF}
On suppose dans la suite que les dimensions v\'erifient la condition \ref{rangf}.
Nous allons donner sous ces hypoth\`eses, pour une place $v$ impaire la
preuve de la proposition \ref{propmaj} et du th\'eor\`eme \ref{thmmaj}. Nous allons prouver une formule
du produit scalaire de Rallis locale, c'est la proposition \ref{Form}. Sous la conjecture 1.2 de \cite{KR} on peut d\'emontrer cette formule de mani\`ere locale seulement sous l'hypoth\`ese de rang stable. On prouve ensuite la formule de mani\`ere globale sous la condition \ref{rangf}.
\subsection{Le groupe doubl{\'e}}\label{class}
Nous notons $-W$ l'espace hermitien muni de la forme sesquilin{\'e}aire $-\langle,\rangle$ et $2W=W\oplus -W$. Nous notons $\Delta^+(W)$ (resp. $\Delta^-(W)$) les sous-espaces vectoriels de $2W$ form{\'e}s par les {\'e}l{\'e}ments de la forme $(w,w)$ (resp. $(w,-w)$). Ce sont deux sous-espaces isotropes maximaux de $2W$ en dualit{\'e}. On note $P_\Delta$ le stabilisateur de $\Delta^+(W)$, c'est le parabolique de Siegel de $U(2W)$. On note $\iota$ le plongement canonique: 
$$U(W)\times U(-W)\rightarrow U(2W)$$
Le quotient $P_\Delta\bk U(2W)$ s'identifie {\`a} l'ensemble des sous-espaces isotropes maximaux de $2W$ que l'on note $\Omega(2W)$, on s'int{\'e}resse aux orbites de $U(W)\times U(-W)$ dans $\Omega(2W)$. Il est connu \cite{GPSR} que l'orbite d'un espace isotrope maximal $Z$ est d{\'e}termin{\'e} par l'invariant:
$$k=\dim\left(Z\cap( W\oplus 0)\right)=\dim\left(Z\cap( 0\oplus -W)\right)$$
Nous notons $\Omega_k$ l'orbite correspondante. Un repr{\'e}sentant est
donn{\'e} par $2X_k\oplus \Delta^+(W_k)$. Le stabilisateur, que nous
noterons $\text{St}_k$, de $2X_k\oplus\Delta^+(X_k)$ est contenu dans $P_k\times P_k$ et vaut:
$$\text{St}_k=\text{GL}(X_k)\times \text{GL}(X_k)\times\Delta(U(W_k))\rtimes N_k\times N_k$$
($\Delta$ d\'esigne la diagonale d'un produit).

\subsection{Int{\'e}grale z{e}ta locale}
Soit $\widetilde{\chi}$ un caract{\`e}re unitaire de $E^*$. On consi\-d{\`e}re une section de l'induite:
$$\phi(\bullet,s)\in I_{P_\Delta}^{U(2W)}|\bullet|^s\widetilde{\chi}$$
On peut alors d{\'e}finir pour $v,v'\in V_\pi$ la fonction z{e}ta locale:
$$Z(s,\phi,v,v')=\int_{U(W)}\phi(\iota(g,1),s)(\pi(g)v,v')dg.$$
Vu que $\pi$ et $\widetilde{\chi}$ sont unitaires, cette int{\'e}grale converge absolument pour $\Re(s)\geq \frac{m+e}{2}$, elle admet de plus un prolongement m\'eromorphe. Soit $s_0\in\C$ tel que $\Re(s_0)\geq \frac{m+e}{2}$ alors $Z(s_0)$ d{\'e}finit un {\'e}l{\'e}ment non trivial de:
\begin{eqnarray}\label{espace}
\hom_{U(W)\times U(W)}(I_{P_\Delta}^{U(2W)}\widetilde{\chi}|\bullet|^{s_0},\pi\otimes\widetilde{\chi}\pi^\vee)
\end{eqnarray}
En suivant Kudla et Rallis \cite{KR}, on peut conjecturer que:
\begin{conj}\label{cor1} On a:
\begin{eqnarray}\label{multi1}
\dim\hom_{U(W)\times U(W)}(I_{P_\Delta}^{U(2W)}\widetilde{\chi}|\bullet|^{s_0},\pi\otimes\widetilde{\chi}\pi^\vee)=1.
\end{eqnarray}
\end{conj}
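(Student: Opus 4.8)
Le plan est de d\'eduire la majoration $\dim\leq 1$ d'une analyse \`a la Bruhat (lemme g\'eom\'etrique) de la restriction de $I_{P_\Delta}^{U(2W)}\widetilde\chi|\bullet|^{s_0}$ \`a $\iota(U(W)\times U(-W))$, que l'on identifie \`a $U(W)\times U(W)$; la minoration est d\'ej\`a acquise puisque l'int\'egrale z\^eta $Z(s_0,\phi,\cdot,\cdot)$ fournit d\'ej\`a un \'el\'ement non nul de l'espace consid\'er\'e d\`es que $\Re(s_0)\geq\frac{m+e}{2}$. D'apr\`es le paragraphe \ref{class}, les doubles classes $P_\Delta\bk U(2W)/\iota(U(W)\times U(-W))$ sont les orbites $\Omega_0,\dots,\Omega_m$, $\Omega_0$ \'etant ouverte et $\Omega_k$ ayant pour stabilisateur $\text{St}_k=\text{GL}(X_k)\times\text{GL}(X_k)\times\Delta(U(W_k))\rtimes N_k\times N_k$. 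Comme $P_\Delta\bk U(2W)$ est compact, stratifier $I_{P_\Delta}^{U(2W)}\widetilde\chi|\bullet|^{s_0}$ suivant ces orbites (la partie au-dessus de l'ouvert $\Omega_0$ \'etant une sous-repr\'esentation, le reste \'etant obtenu par induction ferm\'ee successive) fournit une filtration $U(W)\times U(W)$-stable dont les sous-quotients sont les induites compactes $\text{ind}_{\text{St}_k}^{U(W)\times U(W)}(\xi_k)$, o\`u $\xi_k$ est le caract\`ere explicite de $\text{St}_k$ construit \`a partir de $\widetilde\chi|\bullet|^{s_0}$ et des caract\`eres modules $\delta_{P_\Delta}$, $\delta_{P_k}$. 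Il suffit alors de calculer $\hom_{U(W)\times U(W)}\bigl(\text{ind}_{\text{St}_k}^{U(W)\times U(W)}(\xi_k),\pi\otimes\widetilde\chi\pi^\vee\bigr)$ pour chaque $k$ et de sommer.

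Pour l'orbite ouverte $k=0$, on a $\text{St}_0=\Delta(U(W))$, groupe unimodulaire dans le groupe unimodulaire $U(W)\times U(W)$, de sorte que la r\'eciprocit\'e de Frobenius identifie la contribution \`a $\hom_{\Delta(U(W))}\bigl(\xi_0,(\pi\otimes\widetilde\chi\pi^\vee)|_{\Delta(U(W))}\bigr)$. Un calcul direct donne $\xi_0=\widetilde\chi\circ\det$ et $(\pi\otimes\widetilde\chi\pi^\vee)|_{\Delta(U(W))}=(\pi\otimes\pi^\vee)\otimes(\widetilde\chi\circ\det)$, donc cette contribution vaut $\hom_{U(W)}(\pi,\pi)=\C$ par le lemme de Schur. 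L'orbite ouverte contribue ainsi pour exactement $1$ (ce qui red\'emontre au passage la minoration), et tout se ram\`ene \`a prouver que chaque orbite ferm\'ee contribue pour $0$.

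Pour $k\geq 1$, la r\'eciprocit\'e de Frobenius et l'exactitude du foncteur de Jacquet donnent une injection de la contribution de $\Omega_k$ dans $\hom_{\text{GL}(X_k)^2\times\Delta(U(W_k))}\bigl(\xi_k,\ r_{P_k}(\pi)\otimes r_{P_k}(\widetilde\chi\pi^\vee)\otimes\delta\bigr)$, o\`u $r_{P_k}$ d\'esigne le module de Jacquet normalis\'e le long du parabolique maximal $P_k$ de $U(W)$ de Levi $\text{GL}(X_k)\times U(W_k)$ et $\delta$ un caract\`ere module auxiliaire. \'Ecrivant $r_{P_k}(\pi)=\oplus_j\tau_j\otimes\pi'_j$, la non-nullit\'e d'un tel $\hom$ forcerait l'existence d'un $\tau_j$ (repr\'esentation de $\text{GL}_k(E)$) dont le caract\`ere central est \'egal \`a $\widetilde\chi|\det|^{s_0+c_k}$, pour le d\'ecalage explicite $c_k$ provenant des facteurs $\rho$; autrement dit $r_{P_k}(\pi)$ contiendrait un exposant de partie r\'eelle $\geq\frac{m+e}{2}+c_k$. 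Or $\pi$ est \emph{unitaire}: ses coefficients matriciels \'etant born\'es, les exposants de $r_{P_k}(\pi)$ ont une partie r\'eelle major\'ee par $\rho_{P_k}$, et la comparaison avec $\frac{m+e}{2}$ --- qui est, \`a une constante pr\`es, la valeur de $\rho$ attach\'ee au parabolique de Siegel $P_\Delta$ et qui contr\^ole la convergence de $Z(s)$ --- montre qu'aucun appariement n'est possible, l'in\'egalit\'e \'etant stricte pour $\Re(s_0)>\frac{m+e}{2}$. Les contributions des $\Omega_k$, $k\geq 1$, sont donc nulles et l'on conclut $\dim\hom_{U(W)\times U(W)}(\dots)=1$.

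\textbf{La principale difficult\'e} est pr\'ecis\'ement ce dernier d\'ecompte d'exposants: il faut d'une part identifier exactement le twist $\widetilde\chi|\det|^{s_0+c_k}$ impos\'e aux facteurs $\text{GL}(X_k)$ une fois rassembl\'es tous les caract\`eres modules $\delta_{P_\Delta}$, $\delta_{P_k}$ et les d\'ecalages du lemme g\'eom\'etrique, d'autre part invoquer la bonne borne sur les exposants des modules de Jacquet d'une repr\'esentation unitaire de $U(W)(F)$ --- la borne \'el\'ementaire issue du caract\`ere born\'e des coefficients matriciels donne $\Re\leq\rho_{P_k}$, ce qui suffit strictement d\`es que $\Re(s_0)>\frac{m+e}{2}$. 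Le cas limite $\Re(s_0)=\frac{m+e}{2}$ est plus d\'elicat: pour $\pi$ non temp\'er\'ee l'in\'egalit\'e peut d\'eg\'en\'erer en \'egalit\'e, et le constituant $\text{GL}(X_k)$ fautif est alors n\'ecessairement une repr\'esentation de dimension $1$ (twist de $\widetilde\chi\circ\det$); il faut montrer directement que l'application de bord correspondante s'annule, ce qui rel\`eve de l'analyse fine men\'ee dans \cite{KR} et constitue le point technique central du r\'esultat.
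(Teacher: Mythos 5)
Attention au statut de l'\'enonc\'e dans l'article : il s'agit d'une \emph{conjecture}, non d'une proposition d\'emontr\'ee. L'auteur \'ecrit explicitement \og En suivant Kudla et Rallis \cite{KR}, on peut conjecturer que \fg{} --- et l'article ne fournit aucune preuve de cet \'enonc\'e. Il le contourne enti\`erement : pour \'etablir la formule locale du produit scalaire (proposition \ref{Form}), l'article passe par la formule de Siegel--Weil globale et la m\'ethode du doublement adélique, puis red\'escend au cas local en figeant des tenseurs purs aux autres places. Aucune filtration de Mackey n'est donc effectu\'ee dans l'article pour cette conjecture pr\'ecise ; votre proposition n'a pas de pendant textuel auquel la comparer.

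Sur le fond de votre tentative : la strat\'egie (stratification de $I_{P_\Delta}^{U(2W)}\widetilde\chi|\bullet|^{s_0}$ par les orbites de $U(W)\times U(-W)$ dans $P_\Delta\bk U(2W)$, contribution $\C$ de l'orbite ouverte via Schur, nullit\'e des orbites ferm\'ees par comparaison d'exposants de modules de Jacquet) est effectivement la voie standard \`a la Kudla--Rallis/GPSR. Mais l'\'etape que vous isolez vous-m\^eme dans le dernier paragraphe --- l'annulation des contributions des orbites $\Omega_k$, $k\geq 1$, au point $s_0$ lui-m\^eme --- est pr\'ecis\'ement l'obstacle qui fait que l'\'enonc\'e reste conjectural. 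La borne \'el\'ementaire sur les exposants d'un module de Jacquet d'une repr\'esentation \emph{unitaire} ($\Re\leq\rho_{P_k}$, issue du caract\`ere born\'e des coefficients) ne suffit pas en g\'en\'eral : lorsque $\pi$ n'est pas temp\'er\'ee on peut avoir \'egalit\'e exacte, l'appariement avec le facteur $\widetilde\chi|\det|^{s_0+c_k}$ n'est alors pas exclu par une simple in\'egalit\'e, et le constituant fautif sur $\text{GL}(X_k)$ peut survivre. Montrer que l'application de bord correspondante s'annule malgr\'e tout rel\`eve de l'analyse fine de \cite{KR} (et de ses variantes unitaires/orthogonales) et n'est pas une cons\'equence formelle du lemme g\'eom\'etrique. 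Signalons aussi un point technique secondaire : l'orbite ouverte a pour stabilisateur $\Delta(U(W))$, qui est ferm\'e mais non ouvert dans $U(W)^2$ ; la r\'eciprocit\'e de Frobenius pour l'induction compacte $\text{ind}$ \`a partir d'un sous-groupe ferm\'e non ouvert ne s'\'ecrit pas $\hom_G(\text{ind}_H\sigma,\tau)\cong\hom_H(\sigma,\tau|_H)$ mais passe par le contragr\'edient ($\hom_G(\text{ind}_H\sigma,\tau)\cong\hom_H(\tau^\vee|_H,\sigma^\vee)$), ce qui pour $\pi$ irr\'eductible donne bien $\C$ mais demande d'\^etre \'ecrit correctement. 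En r\'esum\'e : l'approche est le bon canevas, le calcul de l'orbite ouverte est essentiellement juste, mais l'argument d'annulation des orbites ferm\'ees au point critique manque, et c'est exactement la raison pour laquelle l'article pr\'ef\`ere d\'emontrer la proposition \ref{Form} par voie globale plut\^ot que via cette conjecture locale.
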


\subsection{La m{\'e}thode du double}
On d\'efinit le caract\`ere $\widetilde{\chi}$  par la formule suivante:
\begin{itemize}
\item Cas 1 $\eta=1$ $\widetilde{\chi}=\chi_V$
\item Cas 1 $\eta=-1$ $\widetilde{\chi}=1$
\item Cas 2 $\widetilde{\chi}={\chi}^n$
\end{itemize}

 Soit $S$ un mod{\`e}le de la repr{\'e}sentation de Weil de $H(\mathbf{W})$. Vu que: $$H(2\mathbf{W})=H(\mathbf{W})\times H(-\mathbf{W})/\{(t,-t)|t\in F\}$$
 $S\otimes S^\vee$ est un mod{\`e}le de la repr{\'e}sentation de Weil de $H(2\mathbf{W})$. Cependant on a introduit pr\'ec\'edemment une polarisation
naturelle $2\mathbf{W}=\Delta^+(W)\oplus \Delta^-(\mathbf{W})$ de
$2\mathbf{W}$. On a donc un mod\`ele de Schr\"odinger naturel de la
repr\'esentation de Weil de $H(2\mathbf{W})$, donn\'e par les fonctions de
Schwarz sur $\Delta^-(\mathbf{W})=\mathbf{W}$. L'isomorphisme entre les repr\'esentaions de Weil de $H(\mathbf{W})$ est donn\'e par l'op\'erateur d'entrelacement $C_V$:
\begin{eqnarray}\label{iso}
C_V=\left\{\begin{array}{ccc}
S\otimes S^\vee&\rightarrow& S(W\otimes V)\\
s\otimes s^\vee&\mapsto& (w\otimes v\mapsto (\rho(2w\otimes v)s,s^\vee).
\end{array}\right.
\end{eqnarray}
La repr\'esentation de Weil $\om_\chi$ de $U(2W)$ sur $S(V\otimes W)$ (c'est \`a dire le choix de cocycle) est d\'efini par Kudla dans \cite{K} et est caract\'eris\'e par le fait que le Levy $P_\Delta$ de $U(2W)$ agit par la formule suivante sur $\vhi\in S(W\otimes V)$:
\begin{eqnarray}\label{para}
\om_\chi(a)\vhi(x)=\widetilde{\chi}(\det a)|\det a|^{\frac{n}{2}}\vhi(a^{-1}x).
\end{eqnarray}
On d\'emontre dans la partie \ref{scindage} (lemme \ref{lemRes}) qu'en restriction \`a $U(W)\times U(W)$ l'op\'erateur d'entrelacement $C_V$ induit un isomorphisme:
\begin{eqnarray}\label{isotor}
\om_\chi\otimes\widetilde{\chi}\om_\chi^\vee={\om_\chi}_{|U(V)\times U(V)}.
\end{eqnarray}
 Posons $s_0=\frac{n-(m+e)}{2}$, d'apr\`es (\ref{para}) l'application suivante que l'on notera $I_V$ d\'efini un op\'erateur d'entrelacement:
\begin{eqnarray}\label{section}
\begin{array}{ccc}
S(W\otimes V)&\rightarrow& I_{P_\Delta}\widetilde{\chi}|\bullet|^{s_0}\\
\vhi&\mapsto& (h\mapsto \om_\chi(h)\vhi(0))
\end{array}
\end{eqnarray}
L'image de cette application est not{\'e}e $R(\widetilde{\chi},V)$, un th{\'e}or{\`e}me de Kudla, Rallis et Sweet  montre que cet espace est exactement $\Theta(V,\mathbf{1})$. Dans le rang stable, le r{\'e}sultat suivant est d{\'e}montr{\'e} par Kudla, Rallis et Sweet:
\begin{prop}\label{surj1}
L'application $I_V$ est surjective et $R(\widetilde{\chi},V)=I_{P_\Delta}\widetilde{\chi}|\bullet|^{s_0}$ est irr{\'e}ductible.
\end{prop}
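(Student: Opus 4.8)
The plan is to reduce everything to the \emph{irreducibility} of the degenerate principal series $I_{P_\Delta}(\widetilde{\chi}|\bullet|^{s_0})$ of $U(2W)$ at the distinguished point $s_0=\frac{n-(m+e)}{2}$, which is the input that makes the stable-range hypothesis (\ref{rangf}) intervene. First I would record that the map $I_V$ of (\ref{section}) is $U(2W)$-equivariant: by the defining formula (\ref{para}) the function $h\mapsto\om_\chi(h)\vhi(0)$ transforms on the left under the Siegel Levi $P_\Delta$ by $\widetilde{\chi}|\det|^{n/2}$, which is precisely the character $\widetilde{\chi}|\bullet|^{s_0}\delta_{P_\Delta}^{1/2}$ defining the induced representation (this is the reason the value $s_0=\frac{n-(m+e)}{2}$ is the one that occurs). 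Hence $R(\widetilde{\chi},V)=\mathrm{Im}(I_V)$ is a $U(2W)$-submodule of $I_{P_\Delta}(\widetilde{\chi}|\bullet|^{s_0})$, and it is nonzero since $I_V$ does not annihilate the characteristic function of a self-dual lattice (respectively a Gaussian at the real place), as $I_V(\vhi)(1)=\vhi(0)$. Therefore, the moment we know that $I_{P_\Delta}(\widetilde{\chi}|\bullet|^{s_0})$ is irreducible, we automatically get $R(\widetilde{\chi},V)=I_{P_\Delta}(\widetilde{\chi}|\bullet|^{s_0})$, which is simultaneously the surjectivity of $I_V$ and the irreducibility asserted for $R(\widetilde{\chi},V)$.

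It then remains to prove that $I_{P_\Delta}(\widetilde{\chi}|\bullet|^{s_0})$ is irreducible under (\ref{rangf}). For this I would appeal to the classification of the reducibility locus of the degenerate principal series attached to the Siegel parabolic of $U(2W)$: this is due to Kudla--Rallis in the symplectic and (split) orthogonal cases and to Kudla--Sweet (completed by Lee) in the unitary case. In every case one knows that $s\mapsto I_{P_\Delta}(\widetilde{\chi}|\bullet|^{s})$ is irreducible for $\Re(s)$ lying outside an explicit finite set of points, symmetric about $s=0$ and contained in an interval $|s|\le c(m,e)$ with $c(m,e)$ of size $\frac{m}{2}+O(1)$ (the exact set depending, in case $2$, on the residue of $n$ modulo $2$ through $\widetilde{\chi}=\chi^n$, and on the ramification of $\chi$). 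A short computation with the possible values $e\in\{-1,0,1\}$, $d\in\{0,\frac12,1\}$ then shows that the inequality $n>2m+4d-2$ of (\ref{rangf}) is more than enough to force $s_0=\frac{n-(m+e)}{2}$ strictly to the right of that finite set, so that $s_0$ is a point of irreducibility. This is the concrete manifestation of the expected picture: as $\dim V=n$ grows, the submodules $R(\widetilde{\chi},V)\subseteq I_{P_\Delta}(\widetilde{\chi}|\bullet|^{s})$ form an exhausting family, and in the stable range they already fill the whole (now irreducible) induced module.

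If one prefers not to quote the full reducibility classification, an alternative is to argue directly with Jacquet modules: one computes $R_{P_\Delta}(I_{P_\Delta}(\widetilde{\chi}|\bullet|^{s_0}))$ by the geometric lemma, using the orbit decomposition $\Omega(2W)=\coprod_k\Omega_k$ and the stabilizers $\mathrm{St}_k$ recalled in \S\ref{class}, and compares it with the Kudla filtration of $S(W\otimes V)$ for the pair $(U(2W),U(V))$; in the stable range the boundary subquotients indexed by $k\ge1$ drop out for dimension reasons, pinning the cosocle of $I_{P_\Delta}(\widetilde{\chi}|\bullet|^{s_0})$ to a single constituent and so forcing any proper submodule to be zero. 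Either way, the genuine obstacle is the reducibility analysis of the degenerate principal series itself; granting that, the remaining steps are the equivariance of $I_V$ and the numerology of (\ref{rangf}). Along the way one should also make sure the facts used implicitly are in place: the identification $R(\widetilde{\chi},V)=\Theta(V,\mathbf{1})$ of Kudla--Rallis--Sweet, and the compatibility of the construction (\ref{section}) with the normalization of $\om_\chi$ on $U(2W)$ fixed by (\ref{para}) and (\ref{isotor}).
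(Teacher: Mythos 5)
The paper does not prove Proposition~\ref{surj1} at all: it is simply quoted, with the sentence ``Dans le rang stable, le r\'esultat suivant est d\'emontr\'e par Kudla, Rallis et Sweet'' serving as the entire justification. So there is no internal argument to compare against; you are reconstructing a proof that the author deferred to the literature.

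Your overall strategy is the right one and does reflect how the result is actually established: the map $I_V$ of (\ref{section}) is $U(2W)$-equivariant by (\ref{para}), its image $R(\widetilde{\chi},V)$ is a nonzero submodule of $I_{P_\Delta}(\widetilde{\chi}|\cdot|^{s_0})$ since $I_V(\vhi)(1)=\vhi(0)$, and once the degenerate principal series is known to be irreducible at $s_0$ both assertions of the proposition follow at once. The only real content is therefore the irreducibility at $s_0$, and referring this to the Kudla--Rallis / Kudla--Sweet classification is the standard route. That part of your write-up is a sound reconstruction.

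However, your claim that the inequality (\ref{rangf}), $n>2m+4d-2$, ``is more than enough'' to push $s_0=\frac{n-(m+e)}{2}$ strictly past the reducibility locus does not survive a check of the orthogonal case. Take $V$ orthogonal and $W$ symplectic, so that $U(2W)=\mathrm{Sp}_{2m}$, $d=0$, and the relevant $e$ is $e_W=+1$, giving $s_0=\tfrac{n-m-1}{2}$. The reducibility set for $I_{P_\Delta}^{\mathrm{Sp}_{2m}}(\widetilde{\chi}|\cdot|^{s})$ reaches $|s|=\tfrac{m+1}{2}$, so you would need $n>2m+2$, whereas (\ref{rangf}) only gives $n>2m-2$. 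Even the genuine stable-range hypothesis (Witt index of $V$ at least $m$, hence $n\ge 2m$) leaves a window $n\in\{2m,2m+1,2m+2\}$ where $s_0\le\frac{m+1}{2}$. In that range the precise statement of Kudla--Rallis is not ``the induced representation is irreducible because $s_0$ lies outside the reducibility locus'' but rather an analysis of the constituents at the boundary points showing that $R(\widetilde{\chi},V)$ already exhausts $I_{P_\Delta}(\widetilde{\chi}|\cdot|^{s_0})$; the numerology alone does not close the argument. You should either state the proposition under a stronger hypothesis (so that $s_0$ really is to the right of all reducibility points) or invoke the finer Kudla--Rallis decomposition of the degenerate principal series at the boundary, rather than just the coarse bound $|s|\le c(m,e)$.

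Finally, your alternative via Jacquet modules is not complete as written: knowing that the cosocle of $I_{P_\Delta}(\widetilde{\chi}|\cdot|^{s_0})$ has a single constituent does not force every proper submodule to vanish (a uniserial module of length two has simple socle and simple cosocle and is still reducible). You would need to control the full composition series, not just the cosocle, for that route to yield irreducibility.
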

En restriction {\`a} $U(W)\times U(-W)$ on obtient un op{\'e}rateur d'entrelacement surjectif $I_V$:
$$
S\otimes\widetilde{\chi} S^\vee\rightarrow I_{P_\Delta}\widetilde{\chi}|\bullet|^{s_0}
$$
v{\'e}rifiant d'apr\`es la formule (\ref{iso}) pour tout $(g,g')\in U(W)\times U(-W)$:
\begin{eqnarray}\label{ent}
I_V(s\otimes\overline{s'})(\iota(g,g'))=\widetilde{\chi}(\det g')(\om_\chi(g)s,\om_\chi(g')s').
\end{eqnarray}
La proposition \ref{surj1} permet de construire un deuxi{\`e}me {\'e}l{\'e}ment de l'espace d'entrelacement d\'efini par l'\'egalit\'e (\ref{espace}). En effet, 
consid{\'e}rons la projection $p:S\otimes\pi\rightarrow \theta(\pi)$, comme les repr{\'e}sentations sont toutes unitaires on a {\'e}galement une application duale $p^\vee: S^\vee\otimes\pi^\vee\rightarrow \theta(\pi)^\vee$. Introduisons une nouvelle notation, pour une repr{\'e}sentation $\pi$ nous notons $C:\pi\otimes\pi^\vee\rightarrow \C$ l'application coefficient. Consid{\'e}rons :
$$C\circ(p\otimes p^\vee):S\otimes\widetilde{\chi} S^\vee\otimes\pi\otimes\widetilde{\chi}\pi^\vee\rightarrow\C,$$
cette application se factorise en un {\'e}lement de:
$$\hom(R(\widetilde{\chi},V),\pi\otimes\widetilde{\chi}\pi^\vee).$$
L'addition de la proposition \ref{surj1} et de la conjecture \ref{cor1}
(puisque $s_0>\frac{m+r+e}{2}$) montrerait qu'il existe une constante non nulle $c$ telle que:
\begin{prop}[Formule du produit scalaire de Rallis locale]\label{Form} Sous l'hypoth\`ese \ref{rangf}
\[\left(p(\vhi_1\otimes v_1),p(\vhi_2\otimes v_2)\right)=c\int_{U(W)}(\om_\chi(g)\vhi_1,\vhi_2)(\pi(g)v_1,v_2)dg.\]
\end{prop}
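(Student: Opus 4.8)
Les deux membres de l'identit\'e cherch\'ee, regard\'es comme fonctions de $(\vhi_1\otimes v_1)\otimes(\overline{\vhi_2}\otimes\overline{v_2})\in(S\otimes\widetilde\chi S^\vee)\otimes(\pi\otimes\widetilde\chi\pi^\vee)$, se factorisent tous deux, via l'op\'erateur $I_V$ et l'isomorphisme (\ref{isotor}), en des \'el\'ements de $\hom_{U(W)\times U(W)}(R(\widetilde\chi,V),\pi\otimes\widetilde\chi\pi^\vee)$. Pour le membre de gauche c'est ce qui a \'et\'e expliqu\'e juste avant l'\'enonc\'e: $C\circ(p\otimes p^\vee)$ d\'efinit un \'el\'ement $\Phi_2$ de cet espace. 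Pour le membre de droite, la formule (\ref{ent}) prise en $g'=1$ donne $\int_{U(W)}(\om_\chi(g)\vhi_1,\vhi_2)(\pi(g)v_1,v_2)\,dg=Z(s_0,I_V(\vhi_1\otimes\overline{\vhi_2}),v_1,v_2)$; comme $I_V$ est surjective d'image $R(\widetilde\chi,V)=I_{P_\Delta}\widetilde\chi|\bullet|^{s_0}$ (proposition \ref{surj1}), le membre de droite provient de l'\'el\'ement $\Phi_1=Z(s_0)$, non nul dans (\ref{espace}) d\`es que $\Re(s_0)\geq\frac{m+e}{2}$, ce qui est le cas sous (\ref{rangf}) puisque $s_0=\frac{n-(m+e)}{2}$ et $n>2m+4d-2=2(m+e)$. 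Tout revient donc \`a la proportionnalit\'e de $\Phi_1$ et de $\Phi_2$, par une constante n\'ecessairement non nulle, la non-nullit\'e de $\Phi_2$ venant de ce que $p$ est surjective sur $\theta(\pi)\neq 0$, muni de son produit scalaire d\'efini positif.

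Dans le rang stable, la proposition \ref{surj1} fournit l'irr\'eductibilit\'e de $R(\widetilde\chi,V)$ et la conjecture \ref{cor1} que l'espace de $\hom$ ci-dessus est de dimension $1$: la proportionnalit\'e est alors imm\'ediate. La conjecture \ref{cor1} n'\'etant pas connue, on contourne ce point par un argument global sous la seule hypoth\`ese (\ref{rangf}). On remarque d'abord que (\ref{rangf}) entra\^ine le crit\`ere de Weil (\ref{Weil}): l'indice de Witt $r$ de $V$ v\'erifiant $r\leq n/2$, on a $n-r\geq n/2>m+2d-1$. On globalise alors toutes les donn\'ees, en se pla\c{c}ant sur un corps de nombres convenable, $v_0$ d\'esignant la place o\`u vivent les objets locaux de d\'epart; on choisit une repr\'esentation automorphe cuspidale $\Pi=\otimes_v\Pi_v$ de $U(W)(\A)$ avec $\Pi_{v_0}\simeq\pi$, non ramifi\'ee hors d'un ensemble fini de places, et avec des conditions locales ad\'equates (une place sph\'erique o\`u l'int\'egrale z\^eta locale est un quotient explicite et non nul de facteurs $L$, une place supercuspidale garantissant la cuspidalit\'e des rel\`evements).

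Sous (\ref{rangf}), $V\otimes E_v$ est dans le rang stable en toute place, donc chaque rel\`evement local $\theta(\Pi_v,V)$ est non nul (Li); d'apr\`es (\ref{Weil}) les fonctions theta globales de (\ref{deftheta}) sont de carr\'e int\'egrable et $\Theta(\Pi,V)_d$ est non nul, irr\'eductible, isomorphe \`a $\otimes_v\theta(\Pi_v,V)$ (th\'eor\`emes \ref{conv} et \ref{surj}). On calcule alors le produit scalaire global $\langle\theta_{\psi,\chi}(f_1,\vhi_1),\theta_{\psi,\chi}(f_2,\vhi_2)\rangle$ de deux fa\c{c}ons. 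D'une part, par la m\'ethode du double (doublement de $U(W)$ en $U(2W)$ et formule de Siegel--Weil de Kudla, Rallis et Sweet dans le domaine de convergence, ouvert par (\ref{rangf})), il est \'egal, \`a une constante globale non nulle pr\`es, au produit $\prod_v Z_v(s_0,I_V(\vhi_{1,v}\otimes\overline{\vhi_{2,v}}),f_{1,v},f_{2,v})$. D'autre part, $\Theta(\Pi,V)_d$ \'etant irr\'eductible, son produit scalaire est en chaque place l'unique produit scalaire invariant \`a un scalaire pr\`es, si bien que le membre de gauche vaut lui aussi, \`a constante pr\`es, $\prod_v(\,\cdot\,,\,\cdot\,)_{\theta(\Pi_v,V)}$.

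En \'egalant ces deux factorisations et en ne faisant varier que la donn\'ee \`a la place $v_0$ (les facteurs aux autres places \'etant fix\'es non nuls gr\^ace au choix des donn\'ees), on obtient que $(p(\vhi_1\otimes v_1),p(\vhi_2\otimes v_2))$ est un multiple, par une constante $c$ ind\'ependante des vecteurs, de $Z_{v_0}(s_0,I_V(\vhi_1\otimes\overline{\vhi_2}),v_1,v_2)=\int_{U(W)}(\om_\chi(g)\vhi_1,\vhi_2)(\pi(g)v_1,v_2)\,dg$; et $c\neq 0$, sinon le produit scalaire de $\theta(\pi)$ serait identiquement nul, contredisant $\theta(\pi)\neq 0$ et l'unitarit\'e. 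Le point d\'elicat est cette \'etape globale: disposer de la formule du produit scalaire de Rallis globale dans le domaine de convergence ouvert par (\ref{rangf}) --- ce qui rel\`eve de la formule de Siegel--Weil convergente de Kudla et Rallis --- puis contr\^oler les facteurs locaux auxiliaires pour pouvoir diviser par la constante globale, la non-annulation des rel\`evements theta local et global sous (\ref{rangf}), via le th\'eor\`eme \ref{surj}, \'etant pr\'ecis\'ement ce qui garantit $c\neq 0$.
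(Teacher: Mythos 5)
Your proof follows the same strategy as the paper's: derive the global Rallis inner product formula (\ref{FPR}) via the doubling method and the convergent Siegel--Weil formula (valid under (\ref{rangf})), then deduce the local identity at $v_0$ by fixing pure tensors at all other places and invoking the factorization $\Theta(\Pi,V)_d\simeq\otimes_v\theta(\Pi_v,V)$ from Theorem \ref{conv}. You usefully make explicit two steps the paper leaves implicit --- the globalization of the local data $(\pi,W)$ into an automorphic cuspidal $\Pi$ with $\Pi_{v_0}\simeq\pi$, and the factorization of the Hermitian pairing on the irreducible global lift into a product of local pairings, which is what allows division by the fixed auxiliary factors --- and you correctly flag, as the paper does just before the statement, the purely local alternative via surjectivity of $I_V$ and the multiplicity-one conjecture \ref{cor1}. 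The one slight imprecision is the parenthetical about an auxiliary supercuspidal place ``garantissant la cuspidalit\'e des rel\`evements'': what is actually needed is the square-integrability of the global lift, which, as you observe yourself, already follows from the Weil criterion (\ref{Weil}) implied by (\ref{rangf}); a supercuspidal place serves only to ensure $\Pi$ itself is cuspidal, which here is simply a choice.
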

Nous donnons une preuve de cette proposition dans la partie suivante.
\subsection{Formule globale}
On se donne deux formes automorphes cuspidales $\pi$ et $\pi'$ de $U(W)$. Soit
$f$ (resp. $f'$) un vecteur de $\pi$ (resp. $\pi'$). Soit
$\vhi_1$ et $\vhi_2$ deux vecteurs de $S$. On cherche \`a calculer
\begin{eqnarray}\label{prodsca}
(\theta(f,\vhi_1),\theta(f',\vhi_2)).
\end{eqnarray}
Pour simplifier les notations nous noterons $[G]$ le quotient $G(F)\bk
G(\A)$ pour un groupe alg\'ebrique $G$ d\'efini sur $F$. On a en
d\'eveloppant que le produit scalaire (\ref{prodsca}) vaut:
\begin{eqnarray*}
\int_{[U(W)]\times [U(W)]}f(h)\overline{f'(h')}\int_{[U(V)]}\Theta(g,h,\vhi_1)\overline{\Theta(g,h',\vhi_2)}dgdhdh'
\end{eqnarray*}
Les formules (\ref{iso}), (\ref{isotor}) et (\ref{section}) montrent que:
\begin{eqnarray*}
\Theta(g,h,\vhi_1)\overline{\Theta(g,h',\vhi_2)}=\Theta\left(g,(h,h'),C_V(\vhi_1\otimes\overline{\vhi_2})\right)\widetilde{\chi}^{-1}(\det
h')
\end{eqnarray*}
On a donc que (\ref{prodsca}) vaut:
\begin{eqnarray*}
\int_{[U(W)]\times
[U(W)]}f(h)\overline{f'(h')}\widetilde{\chi}^{-1}(\det h')\int_{[U(V)]}\theta(g,(h,h'),C_V(\vhi_1\otimes\overline{\vhi_2})dgdhdh'
\end{eqnarray*}
La derni\`ere int\'egrale est convergente et d\'efinie une fonction \`a croissance mod\'er\'ee
d\`es que le crit\`ere de Weil (6) est v\'erifi\'e \cite{Weil}. Comme les
repr\'esentations $\pi$ et $\pi'$ sont cuspidales, on en d\'eduit le dernier
point du th\'eor\`eme \ref{conv}. Rappelons que l'on a pos\'e
$s_0=\frac{n-(m+e)}{2}$. L'\'el\'ement:
$$\phi=I_V\circ C_V(\vhi_1\otimes\overline{\vhi_2})\in I_{P_\Delta}^{U(2W)}\widetilde{\chi}|\bullet|^{s_0}.$$
On le prolonge en une famille de section: 
$$\phi(s)\in
I_{P_\Delta}^{U(2W)}\widetilde{\chi}|\bullet|^{s}$$
 telle que $\phi(s_0)=\phi$. On peut alors former la s\'erie d'Eisenstein:
\begin{eqnarray}\label{defE}
E(g,s,\phi)=\sum_{\gamma\in P_\Delta(F)\bk U(2W)(F)}\phi(\gamma g,s)\ (g\in U(2W)(\mathbb{A}))
\end{eqnarray}
cette s\'erie est convergente pour $\Re(s)\gg 0$ et admet un prolongement
m\'eromorphe. On conjecture le fait suivant:
\begin{conj} Supposons les conditions de Weil v\'erifi\'ee alors:
\begin{itemize}
\item La s\'erie d'Eisenstein pr\'ec\'edente est holomorphe au voisinage de $0$.
\item Il existe une constante non nulle $c$ telle que:
$$E(g,s_0,\vhi)=c\int_{[U(V)]}\Theta(g,h,\vhi_1\otimes\overline{\vhi_2})dh$$
\end{itemize}
\end{conj}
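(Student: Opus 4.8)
Esquissons la preuve, qui est celle d'une formule de Siegel--Weil pour la paire duale $(U(2W),U(V))$: le point $s_0=\frac{n-(m+e)}{2}$ est celui o\`u l'induite de doublage $I_{P_\Delta}\widetilde{\chi}|\bullet|^{s}$ contient le sous-module $R(\widetilde{\chi},V)$. Gr\^ace au crit\`ere de Weil (\ref{Weil}), on a d\'ej\`a rappel\'e que l'int\'egrale $I(g,\vhi):=\int_{[U(V)]}\Theta(g,h,\vhi)dh$ converge et d\'efinit une forme automorphe \`a croissance mod\'er\'ee sur $U(2W)(\A)$, et que $\vhi\mapsto I(\cdot,\vhi)$ est $U(2W)(\A)$-\'equivariante pour l'action de $\om_\chi$ sur $S(W\otimes V)$ (on utilise la covariance du noyau th\^eta, l'int\'egration portant sur le facteur $U(V)$ de la paire). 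Restent donc \`a \'etablir l'holomorphie de la s\'erie d'Eisenstein en $s_0$, puis l'identit\'e des deux formes automorphes.

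Pour l'holomorphie, on partirait du fait que $P_\Delta$ est un parabolique maximal, de sorte que le terme constant de $E(\cdot,s,\phi(s))$ le long de $P_\Delta$ vaut $\phi(s)+M(s)\phi(s)$, o\`u $M(s)$ est l'op\'erateur d'entrelacement standard, et que les p\^oles de $E(\cdot,s,\phi(s))$ dans $\Re(s)\geq 0$ proviennent de ceux de $M(s)$. Suivant Kudla--Rallis, il suffit alors de v\'erifier que la restriction de $M(s)$ au sous-module $R(\widetilde{\chi},V)$ --- irr\'eductible d'apr\`es la proposition \ref{surj1} --- est holomorphe en $s_0$: place par place, $M_v(s)$ est le produit de l'op\'erateur normalis\'e, holomorphe et non nul, par un quotient de facteurs $L$ locaux, et l'hypoth\`ese (\ref{rangf}) place $s_0$ en dehors des z\'eros et p\^oles pertinents de ces facteurs pour les sections provenant de $S(W_v\otimes V_v)$. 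C'est imm\'ediat lorsque $s_0$ appartient au domaine de convergence de la s\'erie d'Eisenstein de doublage; au bord de ce domaine, et en dessous, il faut invoquer l'analyse d\'etaill\'ee des p\^oles de cette s\'erie (formule de Siegel--Weil r\'egularis\'ee). On obtient ainsi le premier point de l'\'enonc\'e.

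L'holomorphie acquise, on comparerait $g\mapsto E(g,s_0,\phi)$ et $g\mapsto I(g,\vhi)$ par leur terme constant le long de $P_\Delta$. C\^ot\'e Eisenstein, ce terme constant est $\phi(g,s_0)+M(s_0)\phi(g,s_0)$. C\^ot\'e th\^eta, un d\'eveloppement de Fourier le long du radical unipotent de $P_\Delta$ (un calcul \`a la Rallis) le calcule comme une somme index\'ee par les orbites de $U(V)(F)$ sur les vecteurs de $W\otimes V$, chaque orbite contribuant une int\'egrale sur $U(V)_x(F)\bk U(V)(\A)$; l'orbite g\'en\'erique --- $x$ injective, de forme induite sur $W$ non d\'eg\'en\'er\'ee --- fournit, via la formule (\ref{para}) qui d\'efinit $\phi$, exactement le terme $\phi(g,s_0)$ \`a une constante $c=\prod_v c_v$ pr\`es, tandis que les orbites d\'eg\'en\'er\'ees (rang de $x$ strictement inf\'erieur \`a $m$, ou forme induite d\'eg\'en\'er\'ee) reproduisent le terme $M(s_0)\phi(g,s_0)$, toutes les sous-int\'egrales en jeu convergeant gr\^ace au crit\`ere de Weil. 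En choisissant $c$ de la sorte, la diff\'erence $D=E(\cdot,s_0,\phi)-c\,I(\cdot,\vhi)$ a un terme constant nul le long de $P_\Delta$, donc le long de tout parabolique; elle est donc de carr\'e int\'egrable et cuspidale. Mais $D$ provient du module $R(\widetilde{\chi},V)$, qui est une induite irr\'eductible en un point non unitaire et ne peut donc contribuer au spectre cuspidal; donc $D=0$, ce qui est l'identit\'e annonc\'ee.

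Le point le plus d\'elicat est l'holomorphie de la s\'erie d'Eisenstein en $s_0$ en restriction au sous-module $R(\widetilde{\chi},V)$, c'est-\`a-dire le contr\^ole de l'op\'erateur d'entrelacement de doublage en ce point. Dans le domaine de convergence de la s\'erie d'Eisenstein il s'agit du th\'eor\`eme classique de Weil; mais pour les petites valeurs de $n$ autoris\'ees par (\ref{rangf}) on se trouve au bord de ce domaine, voire en dessous, et il faut une version r\'egularis\'ee de la formule de Siegel--Weil \`a la Kudla--Rallis. C'est ce qui justifie que l'\'enonc\'e soit ici pr\'esent\'e sous forme conjecturale.
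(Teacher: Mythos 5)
Be aware that the paper does not prove this statement: it is deliberately stated as a conjecture, and the author immediately records that it is a theorem of Kudla--Rallis for the orthogonal--symplectic pair, of Ichino for unitary pairs, and that under the stronger hypothesis (\ref{rangf}) it is Weil's classical convergent Siegel--Weil formula \cite{Weil}. So what the paper ``does'' here is cite; your text is an attempt to reprove these results, and as a proof it has genuine gaps rather than being a complete argument. Your overall strategy (convergence of the theta integral under (\ref{Weil}), holomorphy of $E(g,s,\phi)$ at $s_0$, comparison of constant terms along $P_\Delta$, vanishing of the difference) is indeed the strategy of the literature, so the issue is not the route but the missing substance at each step.

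Concretely: (1) under the hypothesis of the statement --- only the Weil criterion (\ref{Weil}) --- the point $s_0=\frac{n-(m+e)}{2}$ need not lie in the domain of convergence of the doubling Eisenstein series on $U(2W)$; the holomorphy at $s_0$ and the identification of the value there with the theta integral is then exactly the first term identity of the regularized Siegel--Weil formula, which you invoke but do not establish. This is precisely why the statement is conjectural in the generality in which it is phrased. (2) Your descent step is invalid as written: for a group of rank $m>1$ such as $U(2W)$, vanishing of the constant term along the single maximal parabolic $P_\Delta$ does \emph{not} imply vanishing along all parabolics, hence does not imply that $D=E(\cdot,s_0,\phi)-cI(\cdot,\vhi)$ is cuspidal. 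The actual argument of Weil and of Kudla--Rallis compares all the Fourier coefficients along the Siegel parabolic (equivalently, uses the injectivity of the constant term map on the space of automorphic forms generated by $R(\widetilde{\chi},V)$, via the negligibility of the non-open orbits); and the subsequent claim that a cuspidal form cannot come from $R(\widetilde{\chi},V)$ ``because the induced representation is at a non-unitary point'' is not an argument. (3) The assertion that the degenerate $U(V)(F)$-orbits in the expansion of the theta integral assemble into $M(s_0)\phi$ is the heart of the computation, not a remark. Given that the paper only ever uses the conjecture under (\ref{rangf}), where $s_0$ is in the convergent range, the correct and economical treatment is the one the paper adopts: a citation of \cite{Weil} (and of Kudla--Rallis and Ichino for the general case).
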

Cette conjecture a \'et\'e d\'emontr\'ee par Kudla et Rallis pour la paire
orthogonale-symplectique et par Ichino pour les paires de groupes
unitaires. Dans tout les cas, sous l'hypoth\`ese (\ref{rangf}) le r\'esultat est vrai (\cite{Weil}). On obtient finalement que:
$(\theta(f,\vhi_1),\theta(f',\vhi_2))$ est un multiple non nul de la valeur en $s_0$ de
l'int\'egrale zeta:
\begin{eqnarray*}
Z(s,\phi,f,f')&=&\int_{[U(W)]\times[U(W)]}f(h)\overline{f'(h')}\widetilde{\chi}^{-1}(\det h')E((h,h'),s,)dhdh'
\end{eqnarray*}
La classification des $U(W)\times
U(W)$-orbites de $P_\Delta\bk U(2W)$ (cf. paragraphe \ref{class}) et le fait que $f$ et $f'$ soient
cuspidales \cite{GPSR} montre que cette int\'egrale vaut:
\begin{eqnarray*}
&&\int_{\Delta(U(W)(F))\bk U(W)(\A)\times U(W)(\A)}f(h)\overline{f'(h')}\widetilde{\chi}^{-1}(\det h')\phi(h,h'),s,)dhdh'\\
&=&\int_{\Delta(U(W)(F))\bk U(W)(\A)\times
U(W)(\A)}f(h)\overline{f'(h')}\widetilde{\chi}^{-1}(\det
h')\phi(h'^{-1}h,1),s,)dhdh'
\\
&=&\int_{U(W)(\A)}\int_{[U(W)]}f(h)\overline{f'(hu^{-1})}dh\phi((u,1),s)du
\end{eqnarray*}
La deuxi\`eme int\'egrale est le produit scalaire de $f$ et $u^{-1}\bullet f$,
elle est donc nulle si $\pi\neq\pi'$. On a donc bien le lemme
\ref{injW}. Supposons que $f=\otimes f_v$, $f'=\otimes f'_v$ dans
$\pi=\otimes\pi_v$ et $\phi(s)=\otimes \phi_v(s)$ alors on obtient la
formule de Rallis:
\begin{eqnarray*}\label{FPR}
(\theta(f,\vhi_1),\theta(f',\vhi_2))&=&\text{valeur en $s_0$ de: } \prod_v \int_{U(W)_v}(\pi(h)f_v,f_v')\phi((g,1),s)dg
\end{eqnarray*}
Comme $\phi((g,1),s_0)=(\om_\chi(g)\vhi_1,\vhi_2)$ cette formule sugg\`ere
bien la proposition \ref{Form}.

Nous pouvons maintenant prouver la formule locale en une place $v$ en toute g\'en\'eralit\'e. Il suffit pour cela d'utiliser la factorisation du th\'eor\`eme \ref{conv} et la formule pr\'ec\'edente (\ref{FPR}) en fixant des tenseurs purs en toutes places diff\'erentes de $v$. La proposition $4.3$ est donc vraie.

\section{Minoration}\label{min}

\subsection{Construction de fonctions de Schwarz}
On consid{\`e}re dans cette partie la paire duale $U(W)\times U(V_0)$
o{\`u} $U(V_0)$ est le groupe d{\'e}ploy{\'e} de rang $m$ c'est {\`a} dire
que $V_0$ admet une d{\'e}composition de Witt $V_0=X_0\oplus Y_0$ avec
$\dim X_0=m$. Soit $\mathbf{W}_0=W\otimes V_0$ et soit $S_0$ un mod\`ele de la
repr\'esentation de $H(\mathbf{W}_0)$. Pour tout $k$ suffisamment grand, on construit un {\'e}l{\'e}ment $\vhi_{k}\in S_0$, v{\'e}rifiant pour tout $g\in U(W)$:
$$(\om(g)\vhi_k,\vhi_k)=1_{K(\varpi^k)}(g)$$
Donnons d'abord une id\'ee de la construction. On peut consid{\'e}rer que la d\'ecomposition de Witt de $V_0$ est $V_0=W\oplus W^*$, un mod{\`e}le de Schr{\"o}dinger est alors donn{\'e} par $S(W\otimes W^*)=S(\text{End}(W))$. Le groupe $U(W)$ est inclu dans le stabilisateur de $W\otimes W^*$, l'action de $U(W)$ sur $S(\text{End(W)})$ est donc lin\'eaire. C'est \`a dire qu'{\`a} un caract{\`e}re pr{\`e}s l'action de $U(W)$ est donn{\'e} par:
$$\om_\chi(g)\vhi(x)=\vhi(g^{-1}\circ x).$$
L'{\'e}l{\'e}ment:
$$\vhi_k(x)=1_{\text{id}_W+\varpi^k\text{End} L_W}$$ 
convient. Une comparaison avec le mod{\`e}le latticiel $S_0$ de la
repr\'esentation de Weil (ou une construction directe) montre qu'en dehors
des places de ramification l'{\'e}l{\'e}ment $\vhi_k\in S_0[k]$. Plus pr\'ecis\'ement, on a un isomorphisme d'espace h\'ermitien $V_0=W\oplus
W$, avec $V_0$ muni de la forme:
$$(w_1+w_1',w_2+w_2')=\langle w_1,w_2'\rangle-\langle w_1',w_2\rangle$$
Un r\'eseau autodual naturel de $V_0$ est donn\'e par:
$$L_{V_0}=L_W^*\oplus\varpi^{-r}L_W$$
(o\`u $r$ la valuation du conducteur de $\psi\circ\tr$). Un bon r\'eseau du produit tensoriel est donn\'e par:
$$A=L_W\otimes L_{V_0}.$$
Un r\'eseau autodual $L$ v\'erifiant les inclusions:
$$A\subset L\subset A^*$$
est donn\'e par:
$$L=L_W\otimes L_W^*\oplus \varpi^{-r}L_W^*\otimes L_W.$$
On a trois mod\`eles possibles pour la repr\'esentation de Weil, le mod\`ele
de Schr\"odinger $S(W\otimes W)$, le mod\`ele latticiel $S(\mathbf{W}_0//L)$ et
le mod\`ele latticiel g\'en\'eralis\'e
$S(\mathbf{W}_0,\mathbb{S})$. L'isomorphisme entre les deux premiers
mod\`eles est donn\'e par:
$$
\left\{
\begin{array}{ccc}
S(W\otimes W)&\simeq& S(\mathbf{W}_0//L)\\
\vhi&\mapsto& f
\end{array}
\right.
$$
avec:
$$f(w+w')=\int_{L_W\otimes L_W*}\psi(B(w'',w'))\psi(\frac{1}{2}B(w,w'))f(w+w'')dw''$$
On a une identification de $W\otimes W$ avec $\text{End(W)}$ par la formule
suivante:
$$
\left\{
\begin{array}{ccc}
W\otimes W&\simeq& \text{End}(W)\\
w_1\otimes w_2&\mapsto&(w\mapsto \langle w,w_2\rangle w_1)
\end{array}
\right.
$$
On v\'erifie alors que par ces isomorphismes explicites la fonction $\vhi_k$
est \`a support dans $\varpi^{-(k+r_w)}L$ (ce r\'esultat est valide en
caract\'eristique r\'esiduelle $2$). Nous supposons maintenant que toutes les donn\'ees sont non ramifi\'ees, et que $k$ est pair, alors d'apr\`es le th\'eor\`eme \ref{thmW1} et le fait que $\vhi_k\in S^{K_W(\varpi^k)}$, il existe un \'el\'ement $h_k\in \mathcal{H}_{V_0}$ et une fonction $\vhi_k^0\in S[\frac{k}{2}]$ telle que:
\begin{eqnarray}\label{vhik0}
\vhi_k=\om_\chi(h_k)\vhi_k^0.
\end{eqnarray}
Nous posons pour la suite $\vhi_k^1=\widehat{h_k}\vhi_k$.
\subsection{Conclusion}
On a un r{\'e}seau autodual $L_V$ donn{\'e} par la th{\'e}orie globale. La th{\'e}orie des r{\'e}seaux autoduaux montre qu'il existe une d{\'e}composition de Witt de $V$, $V=X\oplus V_1\oplus Y$ avec $\dim X=m$ et $L_V=X\cap L_V\oplus V_1\cap L_V\oplus Y\cap L_V$. Posons $V_0=X\oplus Y$. $V$ se d\'ecompose orthogonalement en $V_0\oplus V_1$. Soit $S_0$ (resp. $S_1$) un mod{\`e}le de la repr{\'e}sentation de Weil de $H(W\otimes V_0)$ (resp. $H(W\otimes V_1)$). Alors $S_0\otimes S_1$ est un mod{\`e}le de la repr{\'e}sentation de Weil de $H(W\otimes V)$, de plus l'action de $U(W)$ sur $S$ est donn{\'e}e par l'action diagonale sur $S_0\otimes S_1$. On a donc si $v,v'\in V_\pi^{K_W(\varpi^k)}$ et $\vhi,\vhi'\in S_1^{K_W(\varpi^k)}$:
\begin{eqnarray*}
&&(p((\vhi_{k}\otimes\vhi)\otimes v),p((\vhi_{k}\otimes\vhi')\otimes v')\\
&=&c\int_{U(W)}(\om(g)\vhi_{k},\vhi_{k})(\om(g)\vhi,\vhi')(\pi(g)v,v')dg\\
&=& (\vhi,\vhi')(v,v')
\end{eqnarray*}
On en d\'eduit donc que l'application:
$$S_1^{K_W(\varpi^k)}\otimes \pi^{K_W(\varpi^k)}\rightarrow \theta(\pi)$$
est injective. Ainsi pour tout compact $K$ de $U(V)$ on a:
\begin{prop}
$$\dim\{\vhi\in S_1^{K_W(\varpi^k)}| \vhi_{k}\otimes \vhi\in S^{K}\}\times\dim\pi^{K(\varpi^k)}\leq \dim\theta(\pi)^K$$
\end{prop}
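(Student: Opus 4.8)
The plan is to deduce this inequality directly from the isometry established just above, together with the $U(V)$-equivariance of the projection $p\colon S\otimes\pi\to\theta(\pi)$, so that no substantial new work is needed beyond what precedes.

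First I would record the point about the group actions. Since $p$ intertwines the $U(V)(F)$-action, and $U(V)(F)$ acts on $S\otimes\pi$ only through the Weil representation $\om_\chi$ on $S$ (it does not act on the $U(W)$-module $\pi$), any element of the compact open subgroup $K\subset U(V)(F)$ fixing $\vhi_k\otimes\vhi\in S$ also fixes $p\bigl((\vhi_k\otimes\vhi)\otimes v\bigr)$ for every $v$. Hence, writing $\Sigma=\{\vhi\in S_1^{K_W(\varpi^k)}\mid \vhi_k\otimes\vhi\in S^{K}\}$, the linear map $T\colon\vhi\otimes v\mapsto p\bigl((\vhi_k\otimes\vhi)\otimes v\bigr)$ carries the finite-dimensional space $\Sigma\otimes\pi^{K_W(\varpi^k)}$ into $\theta(\pi)^{K}$; here finiteness of all dimensions is guaranteed by admissibility of $\pi$.

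Second I would invoke the computation carried out just above: the local Rallis inner product formula (Proposition \ref{Form}) together with the identity $(\om(g)\vhi_k,\vhi_k)=1_{K(\varpi^k)}(g)$ gives, for $\vhi,\vhi'\in S_1^{K_W(\varpi^k)}$ and $v,v'\in\pi^{K_W(\varpi^k)}$,
$$\bigl(p((\vhi_k\otimes\vhi)\otimes v),\,p((\vhi_k\otimes\vhi')\otimes v')\bigr)=(\vhi,\vhi')(v,v')$$
up to the fixed positive constant absorbing $c$ and $\text{vol}(K(\varpi^k))$. Thus $T$ is, up to scaling, an isometry for the tensor-product Hermitian forms, in particular injective. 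Restricting this injection to $\Sigma\otimes\pi^{K_W(\varpi^k)}$ and using that its image lies in $\theta(\pi)^{K}$ yields $\dim\Sigma\cdot\dim\pi^{K_W(\varpi^k)}\le\dim\theta(\pi)^{K}$, which is exactly the assertion.

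The only point requiring a moment's care — and the closest thing to an obstacle — is checking that $T$ really takes values in the irreducible quotient $\theta(\pi,V)$ with no cancellation and is genuinely nonzero; but this is immediate from the isometry property, since an isometry is injective and hence nonzero as soon as $\Sigma\neq 0$. In short, all the substance is already contained in the inner product formula of the previous section, and the present statement is merely its packaging at a fixed level $K$.
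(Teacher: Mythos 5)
Your argument matches the paper's own proof exactly: the local Rallis inner product formula together with the identity $(\om(g)\vhi_k,\vhi_k)=1_{K(\varpi^k)}(g)$ shows the bilinear map $\vhi\otimes v\mapsto p((\vhi_k\otimes\vhi)\otimes v)$ is an isometry (hence injective), and $U(V)$-equivariance of $p$ restricts it to a map into $\theta(\pi)^K$. Correct, and essentially the same route as the paper.
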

La proposition \ref{propmaj} est un corollaire de ce r\'esultat:
\begin{coro}\label{minGro} Il exite une constante $c>0$ telle que pour tout $k$ suffisamment grand:
 $$\dim\theta(\pi,V)^{K(\varpi^{2k+c})}\geq \dim\pi^{K(\varpi^k)}$$
\end{coro}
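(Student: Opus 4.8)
The plan is to read the corollary off the preceding proposition. That proposition says that for every open compact subgroup $K$ of $U(V)$ and every sufficiently large $k$,
$$\dim\{\vhi\in S_1^{K_W(\varpi^k)}\mid \vhi_k\otimes\vhi\in S^{K}\}\,\cdot\,\dim\pi^{K_W(\varpi^k)}\le\dim\theta(\pi,V)^{K};$$
since $\pi$ is a representation of $U(W)$ we have $\dim\pi^{K(\varpi^k)}=\dim\pi^{K_W(\varpi^k)}$, so it suffices to exhibit a constant $c>0$, depending only on $V$, $W$, the chosen lattices, $\psi$ and $\chi$ but not on $\pi$ or $k$, and, for each large $k$, a nonzero vector $\vhi\in S_1^{K_W(\varpi^k)}$ with $\vhi_k\otimes\vhi\in S^{K(\varpi^{2k+c})}$. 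Taking $K=K(\varpi^{2k+c})$ in the proposition then gives exactly the asserted inequality.

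First I would control the support of $\vhi_k\otimes\vhi$ in a (generalized) lattice model of $S=S_0\otimes S_1$. We have already seen that $\vhi_k\in S_0$ is supported in $\varpi^{-(k+a)}L_0$ for a fixed lattice $L_0$ of $\mathbf{W}_0$ and a fixed constant $a$ (a statement valid even in residual characteristic $2$). For $\vhi$ I would take any nonzero element of $S_1$ of bounded support, say the image of a suitable characteristic function in a lattice model of $S_1$; being of bounded support, and by the same elementary computation used above (the one showing that $S[k]$ is fixed by $K_W(\varpi^{2k})$), it is fixed by $K_W(\varpi^b)$ for a fixed constant $b$, hence a fortiori by $K_W(\varpi^k)$ once $k\ge b$. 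Then $\vhi_k\otimes\vhi$ lies in the lattice model of $S$ attached to a fixed autodual lattice $L_{\mathbf{W}}$ of $\mathbf{W}$ (with respect to $\mathfrak{f}_\psi$) adapted to the orthogonal decomposition $\mathbf{W}=\mathbf{W}_0\oplus\mathbf{W}_1$, and it is supported in $\varpi^{-(k+a)}L_{\mathbf{W}}$ as soon as $k$ is large.

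Next I would invoke the very computation already carried out above for $S[k]$, now on the $U(V)$ side: for $g\in K(\varpi^N)\subset U(V)$ with $N\ge 1$ one has, in this model, $\om_\chi(g)s(x)=\lambda_\chi(g)\,s(g^{-1}x)$ (the finite Weil-representation factor $\rho(g)$ being trivial on congruence subgroups), and for $x$ in the support of $s:=\vhi_k\otimes\vhi$ one writes $g^{-1}x=x+(g^{-1}-1)x$ with $(g^{-1}-1)x\in L_{\mathbf{W}}$, so that $s(g^{-1}x)=\psi\left(-\frac{1}{2} B((g^{-1}-1)x,x)\right)s(x)$. When $N$ is large compared with $k+a$ the argument of $\psi$ lies in $\mathfrak{f}_\psi$, and $\lambda_\chi$ is trivial on $K(\varpi^N)$ for $N$ large, being a character of a compact group. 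Choosing $c$ to absorb twice the offset $a$ together with the bounded contributions of $\mathfrak{f}_\psi$, $\lambda_\chi$ and $\rho$, we obtain $\om_\chi(g)(\vhi_k\otimes\vhi)=\vhi_k\otimes\vhi$ for all $g\in K(\varpi^{2k+c})$ and all large $k$, which is what was needed.

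The main obstacle is bookkeeping rather than anything conceptual: one must keep the passages between the Schr\"odinger, lattice and generalized-lattice models compatible with the congruence-subgroup actions on both $U(W)$ and $U(V)$, and in the ramified case carry the extra $\rho$-factor and the splitting character $\lambda_\chi$ through the support estimate so that $c$ comes out genuinely independent of $\pi$. Once the support of $\vhi_k\otimes\vhi$ has been squeezed into $\varpi^{-(k+a)}$ times a fixed lattice, the triviality of the action of a deep enough congruence subgroup of $U(V)$ is literally the computation already performed above for $S[k]$, so no new ingredient is required.
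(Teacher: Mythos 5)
Your proposal is correct and follows essentially the same route as the paper: the paper's one-line proof says that $\vhi_k$ lies in $S[\varpi^{-k}A]$, which is fixed by $K(\varpi^{2k+c})$ for $c$ large, and then invokes the preceding proposition with $K=K(\varpi^{2k+c})$; you fill in exactly this argument — controlling the support of $\vhi_k\otimes\vhi$ using the support estimate already established for $\vhi_k$, and repeating the ``$B((g^{-1}-1)x,x)\in\mathfrak{f}_\psi$'' computation on the $U(V)$ side, with the extra (correct) bookkeeping of the splitting character $\lambda_\chi$ and the residual finite Weil factor $\rho$, both of which die on deep enough congruence subgroups.
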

Il suffit de remarquer que pour $c$ suffisamment grand $S[\varpi^{-k}A]$ est fixe
par $K(\varpi^{2k+c})$ et contient $\vhi_k$, on applique alors la
proposition pr\'ec\'edente.

Pour obtenir le th{\'e}or{\`e}me \ref{maj}, il faut utiliser raffiner la
proposition pr\'ec\'edente en utilisant l'\'el\'ement $\vhi_k^0$ d\'efini en (\ref{vhik0}). Nous calculons maintenant le produit scalaire suivant (avec encore $v,v'\in V_\pi^{K_W(\varpi^k)}$ et $\vhi,\vhi'\in S_1^{K_W(\varpi^k)})$:
\begin{eqnarray*}
&&(p((\vhi_{k}^0\otimes\vhi)\otimes v),p((\vhi_{k}^1\otimes\vhi')\otimes v')\\
&=&c\int_{U(W)}(\om(g)\vhi_{k}^0,\widehat{h_k}\vhi_{k})(\om(g)\vhi,\vhi')(\pi(g)v,v')dg\\
\end{eqnarray*}
Remarquons alors que:
\begin{eqnarray*}
(\om(g)\vhi_{k}^0,\widehat{h_k}\vhi_{k})&=&(h_k\om(g)\vhi_{k}^0,\vhi_{k})\\
&=&(\om(g)h_k\vhi_{k}^0,\vhi_{k})
\end{eqnarray*}
La deuxi\`eme ligne \'etant obtenue en utilisant que l'action de $U(W)$ et de $\mathcal{H}_{V_0}$ commute. On obtient alors en continuant comme la premi\`ere fois que:
\begin{eqnarray}
(p((\vhi_{k}^0\otimes\vhi)\otimes v),p((\vhi_{k}^1\otimes\vhi')\otimes v')&=&(\vhi,\vhi')(v,v')
\end{eqnarray}
On en d\'eduit donc pour tout compact $K$:
$$\dim\{\vhi\in S_1^{K_W(\varpi^k)}| \vhi_{k}^0\otimes \vhi\in S^{K}\}\times\dim\pi^{K(\varpi^k)}\leq \dim\theta(\pi)^K$$
\begin{coro}\label{minfin} On a la minoration suivante:
$$
\dim\theta(\pi,V)^{K(\varpi^k)}\geq \dim\pi^{K(\varpi^{k})}\mathbb{N}w^{\frac{k}{2}(\dim V-2\dim
W)\dim
W}
$$
\end{coro}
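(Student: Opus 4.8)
The plan is to apply the proposition stated just above with $K=K(\varpi^k)$ and to bound from below the dimension of $\{\varphi\in S_1^{K_W(\varpi^k)}\mid\varphi_k^0\otimes\varphi\in S^{K(\varpi^k)}\}$ occurring in it. We keep all data unramified and assume $k$ even (as in Theorem \ref{thmmaj}). Recall from (\ref{vhik0}) that $\varphi_k^0$ lies in $S_0[\frac k2]$, the functions of the lattice model $S_0=S(\mathbf{W}_0//L_0)$ with support in $\varpi^{-k/2}L_0$, where $L_0=L_W\otimes_{\mathcal O}L_{V_0}$. Since the Witt decomposition $V=X\oplus V_1\oplus Y$ was chosen compatibly with the self-dual lattice $L_V$, the lattice $L_{V_0}=(X\cap L_V)\oplus(Y\cap L_V)$ is self-dual in $V_0$, hence $L:=L_W\otimes_{\mathcal O}L_V=L_0\oplus L_1$ with $L_1=L_W\otimes_{\mathcal O}L_{V_1}$ self-dual in $\mathbf{W}_1=W\otimes_E V_1$ and $L$ self-dual in $\mathbf{W}=W\otimes_E V$. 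As test space I would take $S_1[\frac k2]\subset S_1$, the functions of $S_1=S(\mathbf{W}_1//L_1)$ supported in $\varpi^{-k/2}L_1$.

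The first step is to check that every $\varphi\in S_1[\frac k2]$ satisfies the two hypotheses of the proposition. That $\varphi\in S_1^{K_W(\varpi^k)}$ follows from the phase computation carried out after Theorem \ref{thmW1}, applied verbatim on $\mathbf{W}_1$: $K_W(\varpi^k)$ preserves $\varpi^{-k/2}L_1$, and for $g\in K_W(\varpi^k)$ and $w\in\varpi^{-k/2}L_1$ one has $(g^{-1}-1)w\in\varpi^{k/2}L_1\subset L_1$, so $\omega(g)\varphi(w)=\psi(\frac12 B((g^{-1}-1)w,w))\varphi(w)=\varphi(w)$, because $B(\varpi^{k/2}L_1,\varpi^{-k/2}L_1)=B(L_1,L_1)\subset\mathfrak f_\psi$ by self-duality (the relevant splitting character being unramified at $v$). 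That $\varphi_k^0\otimes\varphi\in S^{K(\varpi^k)}$, where $K(\varpi^k)$ is the congruence subgroup of $U(V)$, holds because $\varphi_k^0\otimes\varphi\in S_0[\frac k2]\otimes S_1[\frac k2]=S[\frac k2]$ (the functions of $S(\mathbf{W}//L)$ supported in $\varpi^{-k/2}L$), and the identical computation --- now using the explicit formula (\ref{K}) for the action of the maximal compact of $U(V)$ on this model, together with $(g-1)L\subset\varpi^k L$ for $g\in K(\varpi^k)\subset U(V)\hookrightarrow\mathrm{Sp}(\mathbf{W})$ --- gives $S[\frac k2]\subset S^{K(\varpi^k)}$. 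This last point is the one I expect to be the main obstacle: the argument has to be run inside the lattice model of the full space $\mathbf{W}$ attached to the self-dual lattice $L=L_0\oplus L_1$, so that it controls every element of $K(\varpi^k)$ and not merely those respecting the orthogonal splitting $V=V_0\oplus V_1$; passing from the tensor model $S_0\otimes S_1$ to $S(\mathbf{W}//L)$, having already arranged the Witt decomposition to be compatible with $L_V$, is exactly what makes this possible.

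Granting this, the proposition gives $\dim\theta(\pi,V)^{K(\varpi^k)}\geq\dim S_1[\frac k2]\cdot\dim\pi^{K(\varpi^k)}$, so it remains to evaluate $\dim S_1[\frac k2]$. A function of the lattice model supported in $\varpi^{-k/2}L_1$ is prescribed freely by one value on each coset of $L_1$ inside $\varpi^{-k/2}L_1$ --- the transformation law only propagates the value within a coset, consistently since $B(L_1,L_1)\subset\mathfrak f_\psi$ --- hence $\dim S_1[\frac k2]=[\varpi^{-k/2}L_1:L_1]$. As $v$ is inert and unramified, $L_1=L_W\otimes_{\mathcal O}L_{V_1}$ is free over $\mathcal O_w$ of rank $\dim_E W\cdot\dim_E V_1=m(n-2m)=(\dim V-2\dim W)\dim W$, so this index is $(\mathbb{N}w)^{\frac k2(\dim V-2\dim W)\dim W}$, which is the asserted lower bound. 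Apart from the point highlighted above, the remaining ingredients --- the phase computation of Section \ref{partie3} and the counting of lattice cosets --- are routine.
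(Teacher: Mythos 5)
Your proof is correct and takes essentially the same route as the paper, whose entire argument is the single line ``on applique le r\'esultat pr\'ec\'edent avec $K=K_V(\varpi^k)$, en utilisant que $\vhi_k^0\otimes S_1[\frac{k}{2}]\subset S^{K(\varpi^k)}$''. You have simply unwound the one-liner: you identify the test space $S_1[\frac{k}{2}]$, check both invariance conditions by the phase computation carried out after Th\'eor\`eme~\ref{thmW1} (applied once on $\mathbf{W}_1$ for $K_W(\varpi^k)$ and once on $\mathbf{W}=\mathbf{W}_0\oplus\mathbf{W}_1$ for $K_V(\varpi^k)$, using compatibility of the Witt decomposition with $L_V$), and compute $\dim S_1[\frac{k}{2}]=[\varpi^{-k/2}L_1:L_1]=(\mathbb{N}w)^{\frac{k}{2}m(n-2m)}$ --- exactly the asserted bound.
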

On applique le r\'esultat pr\'ec\'edent avec $K=K_V(\varpi^k)$,en utilisant que:
$$\vhi_k^0\otimes S_1[\frac{k}{2}]\subset S^{K(\varpi^k)}.$$

\section{Le probl\`eme du cocycle}\label{scindage}

\subsection{La d\'efinition de la repr\'esentation $\om_{\chi}$}

On se donne $(S,\rho)$ un mod\`ele de la repr\'esentation de Weil du
groupe Heisenberg $H(\mathbf{W})$. Rappelons qu'il existe une unique
repr\'esentation projective $\omega$ de $\text{Sp}(\mathbf{W})$ v\'erifiant:

\begin{equation}
\rho(gw)=\omega(g)^{-1}\rho(w)\omega(g)\label{eq:def}\end{equation}
Le probl\`eme du cocycle est de d\'ecrire explicitement, dans le mod\`ele
$S$ choisi, une repr\'esentation $\om_\chi$ de la paire duale $U(W)\times
U(V)$ v\'erifiant la relation pr\'ec\'edente. Nous rappelons la solution de
ce probl\`eme donn\'e par Kudla dans \cite{K}. Par sym\'etrie il suffit de le
faire pour $U(W)$.

\subsubsection{Construction d\'eploy\'ee}
Consid\'erons une paire duale $U(W)\times U(V)$ tellr que l'espace $W$ est d\'eploy\'e. Alors si on se donne une
polarisation $W=X\oplus Y$, un mod\`ele de la repr\'esentation de Weil
de $U(W)\times U(V)$ est donn\'e par le mod\`ele de Schr\"odinger:\[
S( X\otimes V)\]
c'est \`a dire les fonctions de Schwarz de $X\otimes V$. L'action
du Levi de $W$ d\'etermin\'e par la d\'ecomposition $X\oplus Y$, qui est
isomorphe \`a $\text{GL}(X)$ est alors donn\'ee par:\[
\widetilde{\chi}(\det a)\vhi(a^{-1}x),\]
avec $\widetilde{\chi}$:
\begin{itemize}
\item Cas 1 $\eta=1$ $\widetilde{\chi}=\chi_V$
\item Cas 1 $\eta=-1$ $\widetilde{\chi}=1$
\item Cas 2 $\widetilde{\chi}_{|E^*}=\epsilon_{E/F}^{\dim V}$
\end{itemize}

\subsubsection{Construction g\'en\'erale}

On utilise alors la m\'ethode du double, la suite exacte:\[
1\rightarrow\left\{ (t,-t)|\ t\in F\right\} \rightarrow H(2\mathbf{W})\rightarrow H(\mathbf{W})\times H(-\mathbf{W})\rightarrow1\]

montre que la repr\'esentation $S\otimes S^{\vee}$ est un mod\`ele de
la repr\'esentation de Weil de $H(2\mathbf{W})$. La polarisation: 
\begin{eqnarray*}\label{d\'ecomp}
\mathbf{W}=\Delta^{+}(\mathbf{W})\oplus\Delta^{-}(\mathbf{W})
\end{eqnarray*}
montre que $S(\mathbf{W})$ est un autre mod\`ele, l'isomorphisme explicite entre les deux mod\`eles est donn\'e par l'espace:
\begin{equation}
\left\{ \begin{array}{ccc}
S\otimes S^{\vee} & \rightarrow^{I} & S(\mathbf{W})\\
s\otimes s^{\vee} & \mapsto & \left(w\mapsto(\rho(2w)s,s^{\vee})\right)\end{array}\right.\label{eq:iso}\end{equation}
On note $\widetilde{\om_{\chi}}$ la repr\'esentation de $U(2W)$ d\'efinie
dans la section pr\'ec\'edente. On sait par la relation $(\ref{eq:def})$ que:

\[
\om_{\chi}=\widetilde{\om_{\chi}}_{|U(W)\times1}\]

est une repr\'esentation de $S$. De plus l'isomorphisme explicite $(\ref{eq:iso})$
montre que:

\[
I(\om_{\chi}\otimes\om_{\chi}^{\vee}(g,g)s\otimes s^{\vee})(w)=I(s\otimes s^{\vee})(g^{-1}w)\]

Or l'\'el\'ement $(g,g)$ est dans le Levi associ\'e \`a la d\'ecomposition
(\ref{d\'ecomp}). On en d\'eduit donc la formule voulue c'est \`a dire que:
\begin{lemm}\label{lemRes}
\[
\widetilde{\om_{\chi}}_{|U(W)\times U(W)}=\om_{\chi}\otimes\widetilde{\chi}\omega_{\chi}^{\vee}\]
\end{lemm}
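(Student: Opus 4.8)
The plan is to verify the identity on a generating set of the subgroup $\iota\bigl(U(W)\times U(-W)\bigr)$ of $U(2W)$. Since $\langle,\rangle$ and $-\langle,\rangle$ have the same isometry group, $U(-W)=U(W)$ inside $\text{GL}(W)$, and every element factors as $\iota(g,g')=\iota(gg'^{-1},1)\,\iota(g',g')$; it therefore suffices to compare the two representations on the two subgroups $U(W)\times 1$ and $\Delta(U(W))=\{\iota(g,g):g\in U(W)\}$. On $U(W)\times 1$ nothing is to be checked: by the definition of $\om_\chi$ recalled just above, $\widetilde{\om_\chi}$ restricted to $U(W)\times 1$ acts through $\om_\chi$ on the first tensor factor of $S\otimes S^{\vee}$ and trivially on $S^{\vee}$, which is exactly the restriction of $\om_\chi\otimes\widetilde{\chi}\om_\chi^{\vee}$ to that subgroup.

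The core of the argument is the diagonal case, where I would compare two descriptions of the same operator. First, the explicit intertwiner $(\ref{eq:iso})$ together with the defining relation $(\ref{eq:def})$ gives, for $g\in U(W)$,
\[
I\bigl((\om_\chi(g)\otimes\om_\chi^{\vee}(g))(s\otimes s^{\vee})\bigr)(w)=(\rho(2w)\om_\chi(g)s,\om_\chi^{\vee}(g)s^{\vee})=(\om_\chi(g)^{-1}\rho(2w)\om_\chi(g)s,s^{\vee})=I(s\otimes s^{\vee})(g^{-1}w),
\]
so that, read in the Schr\"odinger-type model $S(\mathbf{W})$ attached to the polarisation $\Delta^{+}(\mathbf{W})\oplus\Delta^{-}(\mathbf{W})$ of $2\mathbf{W}$, the operator $\om_\chi(g)\otimes\om_\chi^{\vee}(g)$ is simply the substitution $\vhi(w)\mapsto\vhi(g^{-1}w)$. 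Second, $\iota(g,g)$ stabilises both $\Delta^{+}(\mathbf{W})$ and $\Delta^{-}(\mathbf{W})$, hence lies in the Siegel Levi $\text{GL}(\Delta^{+}(W))$ of $U(2W)$, to which it corresponds via the element $g\in\text{GL}(W)$; by the formula for the action of that Levi recalled in this section (cf. $(\ref{para})$) it therefore acts on $S(\mathbf{W})$ by $\vhi(w)\mapsto\widetilde{\chi}(\det g)\,|\det g|^{\,n/2}\,\vhi(g^{-1}w)$, and since $\det$ sends $U(W)$ into a compact group we have $|\det g|=1$. Comparing the two descriptions then yields
\[
\widetilde{\om_\chi}\bigl(\iota(g,g)\bigr)=\widetilde{\chi}(\det g)\,\bigl(\om_\chi(g)\otimes\om_\chi^{\vee}(g)\bigr)=\bigl(\om_\chi\otimes\widetilde{\chi}\om_\chi^{\vee}\bigr)\bigl(\iota(g,g)\bigr).
\]

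Having matched the two representations on the generators $U(W)\times 1$ and $\Delta(U(W))$, one concludes that $\widetilde{\om_\chi}$ restricted to $\iota\bigl(U(W)\times U(-W)\bigr)$ equals $\om_\chi\otimes\widetilde{\chi}\om_\chi^{\vee}$, which is the assertion of the lemma. The point I expect to require the most care is the bookkeeping of the metaplectic cocycle hidden in the second description: one must be sure that the character picked up from the Levi action is precisely $\widetilde{\chi}\circ\det$, with $\widetilde{\chi}$ the character fixed in the doubling construction and not a quadratic twist of it, and that the $|\det|^{\,n/2}$ factor genuinely disappears after restriction to the unitary group. Both are consequences of the explicit form of Kudla's splitting of $\text{Mp}(\mathbf{W})$ over $U(W)\times U(V)$ and of its compatibility with the splitting over $U(2W)$ used to define $\widetilde{\om_\chi}$, as recalled in this section and in Pan's formulas.
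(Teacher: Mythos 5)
Your proof is correct and takes essentially the same route as the paper: define $\om_\chi$ as the restriction of $\widetilde{\om_\chi}$ to $U(W)\times 1$, then transport the diagonal action through the explicit intertwiner $I$ into the Schr\"odinger model for the polarisation $\Delta^{+}\oplus\Delta^{-}$ and compare it with the Siegel--Levi formula $(\ref{para})$. You merely make explicit what the paper leaves implicit — the factorisation $\iota(g,g')=\iota(gg'^{-1},1)\iota(g',g')$ and the disappearance of $|\det|^{n/2}$ on the unitary group — and you should double-check the sign convention in $(\ref{eq:def})$, since as printed it would give $I(s\otimes s^\vee)(gw)$ rather than $I(s\otimes s^\vee)(g^{-1}w)$.
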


\subsubsection{Cas unitaire}
 Il d\'epend du choix
d'un caract\`ere $\widetilde{\chi}$ v\'erifiant:\[
\widetilde{\chi}_{|F^{*}}=\epsilon_{E/F}^{\dim V}\]

Si on se donne deux caract\`ere $\chi_{1}$ et $\chi_{2}$ v\'erifiant
la relation pr\'ec\'edente alors, $\chi_{2}\chi_{1}^{-1}$ d\'efinit un
caract\`ere de $E^{*}$ trivial sur $F^{*}$. Soit:\[
E^{1}=\left\{ z\in E|\ zz^{c}=1\right\} \]
Si $z\in E^{1}$, on peut choisir $u\in E$ tel que:

\begin{eqnarray*}\label{norme}z&=&\frac{u}{u^{c}}\end{eqnarray*}

On d\'efinit un caract\`ere $\alpha_{\chi_{2},\chi_{1}}$ de $E^1$ par:\[
\alpha_{\chi_{2},\chi_{1}}(z)=\chi_{2}\chi_{1}^{-1}(u)\]

qui est ind\'ependant de $u$. La comparaison de la famille de sections
est la suivante:

\[
\om_{\chi_{2}}=(\alpha_{\chi_{2},\chi_{1}}\circ\det)\otimes\om_{\chi_{1}}\]

Remarquons que si l'on se donne un \'el\'ement non nul $\delta\in E$
de trace nulle alors si $z\in E^{1}$, on peut choisir dans $(\ref{norme})$, $u=\delta(z-1)$. 

\subsection{Les r\'esultats de Pan}

On suppose l'extension $E/F$ non ramifi\'ee dans le cas unitaire et
on choisit une unit\'e $\delta\in E$ de trace nulle. Le paragraphe pr\'ec\'edent
rappelait la d\'efinition du cocycle de la repr\'esentation de Weil, cependant
cette repr\'esentation n'\'etait explicite que dans les mod\`eles de Schr\"odinger,
dans le mod\`ele $S(\mathbf{W},\mathbb{S})$ il existe donc un caract\`ere
$\zeta_{V,\chi}$ de $K(L_{W})$ tel que:\[
\om_{\chi}(g)\vhi(w)=\zeta_{V,\chi}(g)\rho(g)\vhi(g^{-1}w)\ \forall g\in U(W)\]

pour tout $g\in K(L_{W})$ le caract\`ere $\zeta_{V,\chi}$ a \'et\'e determin\'e
sous l'hypoth\`ese de non ramification par Pan \cite{Pan1}, le r\'esultat est le suivant:\[
\zeta_{V,\chi}(g)=\chi(\delta(\det g-1))\zeta(g)\]

avec $\zeta(g)$ un caract\`ere d'ordre $2$ de $K(L_{W})$ qui est
trivial d\`es que $W$ est non ramifi\'e.

\begin{lemm}[\cite{Pan1}]
\label{lem:SuppR}Supposons que toutes les donn\'ees soient non ramifi\'ees except\'ee peut-\^etre $V$, alors le caract\`ere $\zeta_{V,\chi}$ est trivial c'est \`a dire que
dans le mod\`ele $S(\mathbf{W},\mathbb{S})$:\[
\om_{\chi}(g)\vhi(w)=\rho(g)\vhi(g^{-1}w)\]
\end{lemm}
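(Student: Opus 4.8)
The plan is to read off the lemma from Pan's explicit formula $\zeta_{V,\chi}(g)=\chi(\delta(\det g-1))\,\zeta(g)$ recalled just above: under the hypothesis that every datum except possibly $V$ is unramified, I will check that each of the two factors on the right is trivial on the maximal compact $K(L_W)$, whence $\om_\chi(g)\vhi(w)=\rho(g)\vhi(g^{-1}w)$ for $g\in K(L_W)$, which is exactly the assertion.

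The factor $\zeta$ disappears immediately: by the cited result of Pan it is a quadratic character of $K(L_W)$ that is trivial as soon as $W$ is unramified, and $W$ is unramified here. So the whole point is to show that $g\mapsto\chi(\delta(\det g-1))$ is trivial on $K(L_W)$.

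For that, I would proceed as follows. Because $E/F$ is unramified with odd residue characteristic, a trace-zero unit $\delta\in\mathcal{O}_E^\times$ exists, and this is the $\delta$ fixed in the statement. Since $W$ is unramified the compact $K(L_W)$ is hyperspecial and $\det$ maps it onto the group $\mathcal{O}_E^1$ of norm-one units, so $g\mapsto\chi(\delta(\det g-1))$ is inflated from a character of $\mathcal{O}_E^1$; by the computation in the unitary-case paragraph — in which, for $z\neq 1$, $u=\delta(z-1)$ is exhibited as a solution of $u\bar u^{-1}=z$ — this character of $E^1$ is the one built from $\chi$ via Hilbert's Theorem~90, $z=u\bar u^{-1}\mapsto\chi(u)$. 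Now $E/F$ being unramified, $v_E$ is onto $\Z$ on $F^\times$, so every $z\in\mathcal{O}_E^1$ admits a representative $u\in\mathcal{O}_E^\times$, and then $\chi(u)=1$ since $\chi$ is unramified. Hence $\chi(\delta(\det g-1))=1$ for all $g\in K(L_W)$, and $\zeta_{V,\chi}$ is trivial.

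The genuine content here is Pan's formula itself, whose proof — a careful analysis of the splitting cocycle (Rao, Perrin, Kudla) in the generalized lattice model $S(\mathbf{W},\mathbb{S})$ — is the hard part and is only quoted. In the short deduction above, the one step asking for a little care is recognizing $g\mapsto\chi(\delta(\det g-1))$ as a bona fide character of $K(L_W)$ and matching it with the Hilbert~90 character; this is precisely what the discussion of the unitary case supplies, and once it is granted, triviality on $\mathcal{O}_E^1$ is automatic because a unit represents every norm-one class and $\chi$ kills units.
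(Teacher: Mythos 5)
The paper offers no proof of this lemma: it is recorded as a citation of Pan, and the preceding paragraph's formula $\zeta_{V,\chi}(g)=\chi(\delta(\det g-1))\zeta(g)$ is displayed only as background. You are therefore attempting to fill in an argument the paper deliberately delegates to a reference; that is legitimate in principle, but your deduction has a real gap in the unitary case.

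The factor $\zeta$ does disappear once $W$ is unramified. The problem is your treatment of $g\mapsto\chi(\delta(\det g-1))$. You replace $\delta(\det g-1)$ by a unit $u$ in the same $F^\times$-coset of solutions of $z=u/\overline{u}$ and conclude $\chi(u)=1$. That replacement changes the value of $\chi$: the Hilbert--$90$ construction recalled just above the lemma produces a well-defined character of $E^1$ only because the input character there, $\chi_2\chi_1^{-1}$, is trivial on $F^\times$; the $\chi$ you are feeding in is not, since $\chi_{|F^\times}=\epsilon_{E/F}$. Concretely, if $\chi$ is unramified and $\delta$ is a trace-zero unit then
\[
\chi\bigl(\delta(z-1)\bigr)=\chi(z-1)=\epsilon_{E/F}(\varpi_F)^{v_E(z-1)}=(-1)^{v_E(z-1)},
\]
and $v_E(\det g-1)$ takes every value in $\N\cup\{\infty\}$ as $g$ runs over $K(L_W)$ (the determinant is onto $\mathcal{O}_E^1$), so this is not identically $1$. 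In fact $g\mapsto(-1)^{v_E(\det g-1)}$ is not even multiplicative, which shows that the displayed formula cannot be read as literally as you are reading it: Pan's actual description of the cocycle in the lattice model is finer than the shorthand the paper reproduces, and the triviality asserted by the lemma cannot be extracted from that shorthand by the substitution you make. A correct proof genuinely requires the details of Pan's computation, which the paper (and your argument) only quote.
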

Ce r\'esultat sera utilis\'e dans la partie \ref{pararam}.

\section{D\'emonstration du lemme \ref{Nram}}\label{pararam}

On est toujours dans une situation locale, il s'agit de prouver le
lemme. On suppose que la place $v\notin T_{V,\psi,\chi}$ et
que $W$ est ramifi\'e.

\subsection{R\'eduction du probl\`eme}

En suivant Pan on consid\`ere des mod\`eles de la repr\'esentation de Weil
plus g\'en\'eraux que ceux du paragraphe 2. On avait impos\'e aux r\'eseaux
$L_{V}$ et $L_{W}$ d'\^etre presque autoduaux. On dira plus g\'en\'eralement
qu'un r\'eseau $L$ est bon si $\varpi L^{*}\subset L\subset L^{*}$.
Si $L$ et $L'$ sont deux r\'eseaux bons de $V$ et $W$. On d\'efinit
le r\'eseau (bon) suivant de $\mathbf{W}$:\[
A(L,L')=L^{*}\otimes L'\cap L\otimes L'^{*}\]

Comme on est en caract\'eristique impaire c'est un sous-groupe ab\'elien
du groupe de Heisenberg $H(\mathbf{W})$. Rappelons (cf. partie \ref{partie3}) que
l'on peut associer au
choix de ces deux bons r\'eseaux de $V$ et de $W$ un mod\`ele de Waldspurger
g\'en\'eralis\'e $S(\mathbf{W},\mathbb{S})$. Les fonctions invariantes par le sous-groupe
$A(L,L')$ sont exactement les fonctions \`a support dans $B(L,L')=A(L,L')^{*}$
et on a un isomorphisme de $H(\mathbf{b})$-module:\[
\left\{ \begin{array}{ccc}
\mathbb{S} & \rightarrow & S(\mathbf{W},\mathbb{S})^{A(L,L')}\\
s & \mapsto & \vhi_{s}(b)=\rho(b)s\\
\\\end{array}\right.\]

De plus sous ces hypoth\`eses $\om_{\chi}$ restreintes au compact $K(L_{W})$
est bien le scindage usuel (lemme $\ref{lem:SuppR}$). En g\'en\'eral, si
$A$ est un r\'eseau bon d'un espace h\'ermitien $Z$, on d\'efinit les deux
espaces vectoriels sur le corps r\'esiduel de $E$ suivants:
\[
\mathbf{a}=A/\varpi A^{*}\text{ et }\mathbf{a}^{*}=A^{*}/A\]

On a une application de r\'eduction surjective:
$$K(L)\rightarrow U(\mathbf{l})\times U(\mathbf{l}^*)$$
dont nous notons $G_{L,0}^+$ le noyau, sous l'hypoth\`ese de non ramification
(c'est \`a dire que $L=L^*$), on obtient la suite exacte suivante:
\[
1\rightarrow G_{L,0}^{+}\rightarrow K(L)\rightarrow U(\mathbf{l})\rightarrow1\]
et le fait que:
$$\mathbf{b}=\mathbf{l'}^*\otimes\mathbf{l}$$

On notera $\overline{\bullet}$ la fl\`eche de r\'eduction. A l'aide de
la remarque sur le scindage on v\'erifie que pour tout $k\in K(L)$:\[
k\bullet\vhi_{s}=\vhi_{\rho(\overline{k})s}\]

En particulier, $S(\mathbf{W},\mathbb{S})^{A(L,L')}$ est stable par
l'action de $K_{V}(L)$ qui agit par r\'eduction comme
la repr\'esentation de Weil finie. Un des r\'esultats principaux de Pan
est le suivant:\begin{equation}
S^{G_{L,0}^{+}}=\om(\mathcal{H}_{W})\sum_{L'\text{ bon}}S^{A(L,L')}\label{eq:Hecke}\end{equation}

Remarquons d'abord le fait suivant:\[
S^{gA}=\om(g)S^{A}.\]

Il suffit donc de regarder dans la somme les r\'eseaux modulo l'action de
$U(W)$. A cette action pr\`es les r\'eseaux bons
sont faciles \`a d\'ecrire. Soit une d\'ecompostion de Witt $W={X\oplus W}_{0}\oplus Y$
et une base $x_{1,}\dots,x_{r}$ de $X$, soit $y_{1},\dots,y_{r}$
la base duale on suppose de plus que $W_{0}$ est anisotrope. Il existe
alors un unique r\'eseau bon $A$ de $W_{0}$ et tout r\'eseau bon de
$W$ est \`a conjuguaison pr\`es de la forme:

\[
N_{i}=\varpi\mathcal{O}x_{1}\oplus\dots\oplus\varpi\mathcal{O}x_{i}\oplus\mathcal{O}x_{i+1}\dots\oplus\mathcal{O}x_{r}\oplus A\oplus\mathcal{O}y_{1}\oplus\dots\oplus\mathcal{O}y_{r}\]

On a alors:\[
\mathbf{b_{i}}=\mathbf{n}_{i}^{*}\otimes\mathbf{l}\]

D'apr\`es les r\'esultats pr\'ec\'edents il s'agit de d\'emontrer que pour tout
$i$, sous l'hypoth\`ese que $\mathbf{a}^{*}$ est non nul (c'est \`a
dire que $W$ est ramifi\'e), la repr\'esentation $\mathbb{S}_{i}$ de
Weil du groupe fini $\text{Sp}(\mathbf{b}_{i})$ n'admet pas d'invariants
sous l'action de $U(\mathbf{l})$. Nous ne connaissons pas de
r\'ef\'erence pour ce r\'esultat qui doit \^etre classique nous en donnons
une preuve.

\subsection{Repr\'esentation de Weil finie.}

Nous reprennons les notations de la partie pr\'ec\'edente:

\begin{lemm}
Si $\mathbf{a}^{*}$ est non nul alors $\mathbb{S}_{i}$ n'a pas de
vecteur invariant sous $U(\mathbf{l})$.
\end{lemm}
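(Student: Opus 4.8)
The assertion is that the \emph{theta lift of the trivial representation} of $U(\mathbf{l})$ to the other member of the finite dual pair $\bigl(U(\mathbf{n}_i^*),U(\mathbf{l})\bigr)$ inside $\Sp(\mathbf{b}_i)$ vanishes, where $\mathbf{b}_i=\mathbf{n}_i^*\otimes\mathbf{l}$ and $U(\mathbf{l})$ acts through $g\mapsto 1_{\mathbf{n}_i^*}\otimes g$. The plan is to evaluate
\[
\dim\mathbb{S}_i^{U(\mathbf{l})}=\frac{1}{|U(\mathbf{l})|}\sum_{g\in U(\mathbf{l})}\chi_{\mathbb{S}_i}\bigl(1_{\mathbf{n}_i^*}\otimes g\bigr),
\]
$\chi_{\mathbb{S}_i}$ being the character of the finite Weil representation, and to show the right-hand side vanishes exactly when $\mathbf{a}^*\neq0$. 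The structural input is: reading off $N_i$ one gets an orthogonal (resp.\ hermitian) splitting $\mathbf{n}_i^*\simeq\mathbb{H}^{\perp i}\perp\mathbf{a}^*$ with $\mathbb{H}$ a hyperbolic plane, so that $\mathbf{n}_i^*$ is split \emph{if and only if} $\mathbf{a}^*=0$; moreover, over the residue field an anisotropic quadratic form has dimension $\le 2$ and an anisotropic hermitian form dimension $\le 1$, so $\mathbf{a}^*$ is one of $O_1$, $O_2^-$, $U_1$ and $U(\mathbf{a}^*)$ is very small.

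Now feed in the classical character formula for the finite Weil representation (Howe, G\'erardin): for $h\in\Sp(\mathbf{b})$ with $\mathbf{b}=\ker(h-1)\perp\mathrm{Im}(h-1)$, $\chi_{\mathbb{S}}(h)$ equals $q^{\frac12\dim\ker(h-1)}$ times the Weil index (a fourth root of unity) of the Cayley form of $h$ on $\mathrm{Im}(h-1)$. For $h=1_{\mathbf{n}_i^*}\otimes g$ one has $\ker(h-1)=\mathbf{n}_i^*\otimes\ker(g-1)$ and the Cayley form is $\langle\cdot,\cdot\rangle_{\mathbf{n}_i^*}\otimes(\text{Cayley form of }g)$, whose Weil index factors as $\varepsilon(\mathbf{n}_i^*)^{\dim\mathrm{Im}(g-1)}\,c(g)$, where $\varepsilon(\mathbf{n}_i^*)=\pm1$ is the discriminant/type sign of $\mathbf{n}_i^*$ and $c(g)$ collects the remaining, $\mathbf{n}_i^*$-independent, contributions. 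If $\mathbf{n}_i^*$ is split then $\varepsilon(\mathbf{n}_i^*)=+1$, every summand has the same argument, the sum is a positive real number, and the lift is nonzero (so the lemma is sharp). If $\mathbf{a}^*\neq0$ then $\varepsilon(\mathbf{n}_i^*)=-1$ and each summand acquires the oscillating sign $(-1)^{\dim\mathrm{Im}(g-1)}=(-1)^{\dim\mathbf{l}-\dim\ker(g-1)}$; one then concludes that the alternating sum vanishes by inserting the classical generating function for $\#\{g\in U(\mathbf{l}):\dim\ker(g-1)=j\}$.

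A more hands-on route avoids the character formula. Using $\mathbf{n}_i^*\simeq\mathbb{H}^{\perp i}\perp\mathbf{a}^*$ one writes $\mathbb{S}_i\simeq\mathbb{C}[\mathbf{l}^{\oplus i}]\otimes\mathbb{S}(\mathbf{a}^*\otimes\mathbf{l})$ as $U(\mathbf{l})$-modules, the factor $\mathbb{C}[\mathbf{l}^{\oplus i}]$ coming from the Schr\"odinger model on the $U(\mathbf{l})$-stable Lagrangian $\mathbf{z}^{\oplus i}\otimes\mathbf{l}$ ($\mathbf{z}$ a line in $\mathbb{H}$), on which $U(\mathbf{l})$ acts linearly up to a character of order $\le2$, trivial when $U(\mathbf{l})=\Sp(\mathbf{l})$. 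One then shows a $U(\mathbf{l})$-fixed vector must be supported at the origin in the $\mathbf{a}^*$-directions: realising $\mathbb{S}_i$ on a Schr\"odinger model of a maximal isotropic $\mathbf{c}\supset\mathbf{z}^{\oplus i}\otimes\mathbf{l}$, the Siegel unipotent $N$ of $\Sp(\mathbf{b}_i)$ acts by $\Phi(w)\mapsto\psi(Q(w))\Phi(w)$ with $Q$ ranging over all quadratic forms on $\mathbf{b}_i/\mathbf{c}$, and \emph{anisotropy of $\mathbf{a}^*$} forces the only relevant $U(\mathbf{l})$-orbit on which such a $Q$ can vanish identically to be $\{0\}$; applying the Weyl element of $\Sp(\mathbb{H}^{\perp i}\otimes\mathbf{l})$ (a Fourier transform) moves the resulting vector off the support-at-origin locus unless $i=0$, and for $i=0$ one invokes the elementary fact that the Weil representation of $\Sp(\mathbf{l})$ — or, in the unitary case, a $\chi$-isotypic piece of it under $U(\mathbf{l})$ — has one-dimensional Siegel-unipotent invariants $\mathbb{C}\delta_0$ on which the Siegel Levi acts nontrivially, so contains no trivial constituent.

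The main obstacle is the bookkeeping of Weil indices in either route: in the first, pinning down $c(g)$ and, above all, identifying $\varepsilon(\mathbf{n}_i^*)$ with the non-split type of $\mathbf{n}_i^*$ so that the alternating sum genuinely telescopes to $0$; in the second, the step ``$U(\mathbf{l})$-invariance plus anisotropy of $\mathbf{a}^*$ implies support at the origin'', which is really the same computation in disguise. This is presumably exactly the ``classical but referenceless'' point the author alludes to, and it is where I would concentrate the effort.
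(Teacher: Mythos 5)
Neither of your two routes closes, so there is a genuine gap. Route 1 (averaging the character of the finite Weil representation over $U(\mathbf{l})$) is a plan rather than a proof: the vanishing of $\sum_{g}\chi_{\mathbb{S}_i}(1\otimes g)$ \emph{is} the lemma, and the reduction you propose does not go through as stated, because the factor you call $c(g)$ (the Weil index of the Cayley form of $g$) is not constant on the level sets of $\dim\ker(g-1)$, so the sum does not collapse to an alternating sum controlled by a generating function for $\#\{g:\dim\ker(g-1)=j\}$. Route 2 starts exactly where the paper does — the mixed model $\mathbb{S}_i\simeq S(\mathbf{l}\otimes X_i,\mathbb{S}_0)$ with $\mathbb{S}_0$ the Weil representation of $\text{Sp}(\mathbf{a}^*\otimes\mathbf{l})$ — but the two group-theoretic moves you then make are invalid: you constrain a $U(\mathbf{l})$-invariant vector by letting the full Siegel unipotent of $\text{Sp}(\mathbf{b}_i)$ act (``$Q$ ranging over \emph{all} quadratic forms on $\mathbf{b}_i/\mathbf{c}$'') and then by applying the Weyl element of $\text{Sp}(\mathbb{H}^{\perp i}\otimes\mathbf{l})$. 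Neither lies in the image of $U(\mathbf{l})$ (indeed $g\mapsto 1\otimes g$ fixes $\mathbf{c}\supset\mathbf{z}^{\oplus i}\otimes\mathbf{l}$ pointwise only for $g=1$, so $U(\mathbf{l})$ meets that Siegel unipotent trivially), and invariance under $U(\mathbf{l})$ says nothing about how such elements move your vector.

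The missing idea is the orbit/stabilizer analysis. Since $U(\mathbf{l})$ acts \emph{diagonally} on $S(\mathbf{l}\otimes X_i)\otimes\mathbb{S}_0$, you cannot decouple the two factors; the paper instead writes $S(\mathbf{l}\otimes X_i,\mathbb{S}_0)=\oplus_j\,\mathrm{ind}_{H_j}^{U(\mathbf{l})}\mathbb{S}_0$ over the $U(\mathbf{l})$-orbits of $\mathbf{l}\otimes X_i$ and applies Frobenius reciprocity, reducing the lemma to $\mathbb{S}_0^{H(\mathbf{m})}=0$, where $H(\mathbf{m})$ is the pointwise fixator of a subspace $\mathbf{m}\subseteq\mathbf{l}$ of dimension $\le i$. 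Your ``support at the origin'' mechanism and your Fourier-transform contradiction both survive, but they must be run \emph{inside} $H(\mathbf{m})$: its unipotent part acts on a further mixed model by $\psi(\mathrm{tr}\,\alpha(x_i,x_j))$, and anisotropy of $\mathbf{a}^*$ kills everything away from $0$; its reductive part $U(\mathbf{l}_0)$ — with $\mathbf{l}_0$ split because $\dim\mathbf{m}$ is small compared to $\dim\mathbf{l}$ — supplies both the Siegel unipotent and the Fourier-transform element needed to prove $\mathbb{S}_2^{U(\mathbf{l}_0)}=0$. So the splitness that matters is that of a subspace of $\mathbf{l}$, not of $\mathbf{n}_i^*$, and the Fourier transform is an element of the acting group, not of the ambient symplectic group. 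Your $i=0$ endgame has the same defect: the Siegel parabolic of $\text{Sp}(\mathbf{a}^*\otimes\mathbf{l})$ is only available through $U(\mathbf{l})$ when $U(\mathbf{l})=\text{Sp}(\mathbf{l})$ and $\dim\mathbf{a}^*=1$; in the orthogonal and unitary cases one must use the parabolic subgroups of $U(\mathbf{l})$ itself, exactly as the paper does.
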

\begin{proof}
On a une d\'ecomposition de Witt de $\mathbf{n}_{i}^{*}=X_{i}\oplus\mathbf{a}^{*}\oplus Y_{i}$
avec $X_{i}$ isotrope de dimension $i$. On utilise pour d\'efinir
$\mathbb{S}_{i}$ un mod\`ele mixte. Soit $\mathbb{S}_{0}$ un mod\`ele
de la repr\'esentation de Weil de $\text{Sp}(\mathbf{l}\otimes\mathbf{a}^{*})$.
On choisit alors pour mod\`ele de la repr\'esentation de Weil:\[
S(\mathbf{l}\otimes X_{i},\mathbb{S}_{0})\]

Les fonctions de $\mathbf{l}\otimes X_{i}$ \`a valeurs dans $\mathbb{S}_{0}$.
L'action de $U(\mathbf{l})$ est donn\'ee par:\[
k\bullet\vhi(x)=\rho_{0}(k)\vhi(k^{-1}x)\]

Soit $(x_{j})_{j\in J}$ une famille de repr\'esentants des orbites
de $\mathbf{l}\otimes X_{i}$ sous l'action de $U(\mathbf{l})$, on
note $H_{j}$ le stabilisateur de $x_{j}$. On a sous l'action de
$U(\mathbf{l})$ un isomorphisme:\[
S(\mathbf{l}\otimes X_{i},\mathbb{S}_{0})=\oplus_{j\in J}\text{ind}_{H_{j}}^{U(\mathbf{l})}\mathbb{S}_{0}\]

En particulier par r\'eciprocit\'e de Frobenius:\[
S(\mathbf{l}\otimes X_{i},\mathbb{S}_{0})^{U(\mathbf{l})}=\oplus_{j\in J}\mathbb{S}_{0}^{H_{j}}\]

Nous commen\c{c}ons par d\'ecrire les diff\'erents groupes $H_{j}$ pouvant
intervenir. Les $H_{j}$ (j variant \'egalement) sont exactement les
fixateurs des sous-espaces vectoriels de $\mathbf{l}$ de dimension
$\leq r$. Soit $\mathbf{m}$ un tel sous-espace vectoriel. Notons
$\mathbf{m'}=\mathbf{m}\cap\mathbf{m}^{\perp},$ le radical de $\mathbf{m}$.
Soit $\mathbf{m_{0}}$ un suppl\'ementaire de $\mathbf{m'}$ dans $\mathbf{m}$.
Soit $\mathbf{m}''$ un sous-espace isotrope de $\mathbf{l}$ en dualit\'e
avec $\mathbf{m}'$. Enfin, notons $\mathbf{l}_{0}$ le suppl\'ementaire
orthogonal du sous-espace non d\'eg\'en\'er\'e $\mathbf{m}'\oplus\mathbf{m}_{0}\oplus\mathbf{m}''$.
On a une d\'ecomposition de Witt suivante de $\mathbf{l}$:\begin{equation}
\mathbf{m'}\oplus(\mathbf{m_{0}\oplus\mathbf{l}_{0}})\oplus\mathbf{m}''\label{eq:dec}\end{equation}

La somme des deux premiers facteurs \'etant \'egale \`a $\mathbf{m}$. Notons $H(\mathbf{m})$
le fixateur de $\mathbf{m}$. La matrice de la forme h\'ermitienne est dans une
base adapt\'ee \`a la d\'ecomposition pr\'ec\'edente:\[
\left(\begin{array}{cccc}
0 &  &  & 1\\
 & x_{0}\\
 &  & y_{0}\\
\epsilon &  &  & 0\end{array}\right)\]

On a alors:\[
H(\mathbf{m})=\left\{ h(u,\delta,\alpha)=\left(\begin{array}{cccc}
1 & 0 & -\epsilon\ ^{t}\overline{\delta}y_{0}u & \alpha\\
0 & 1 & 0 & 0\\
0 & 0 & u & \delta\\
0 & 0 & 0 & 1\end{array}\right)\right\} \]
avec la condition que:
$$\ u\in U(\mathbf{l}_{0})\text{ et }\epsilon\alpha+\ ^{t}\overline{\alpha}+\ ^{t}\overline{\delta}y_{0}\delta=0.$$
On peut alors \`a nouveau d\'ecrire $\mathbb{S}_{0}$ par un mod\`ele mixte
pour lequel la description de l'action de $H(\mathbf{m})$ est bien
connu (\cite{Li3} page 246-247 formule (37),(39) et (41)). Soit $\mathbb{S}_{1}$ un mod\`ele de la repr\'esentation
de Weil de $\text{Sp}(\mathbf{m}_{0}\otimes\mathbf{a}^{*})$ et $\mathbb{S}_{2}$
un mod\`ele de la repr\'esentation de Weil de $\text{Sp}(\mathbf{l}_{0}\otimes\mathbf{a}^{*})$.
On utilise comme mod\`ele:\[
\mathbb{S}_{0}=S(\mathbf{a}^{*}\otimes\mathbf{m}',\mathbb{S}_{1}\otimes\mathbb{S}_{2})\]

L'action de $H(\mathbf{m})$ est d\'ecrite de la fa\c{c}on suivante:

\begin{enumerate}
\item $\rho(h(1,0,\alpha))\vhi(x)=\psi(\text{tr}\alpha(x_{i},x_{j}))\vhi(x)$
\item $\rho(h(1,\delta,\frac{1}{2}\ ^{t}\overline{\delta}y_{0}\delta))\vhi(x)=\rho(\delta\circ x)\vhi(x)$
\item $\rho(h(u,0,0))\vhi(x)=\rho_{2}(u)\vhi(x).$
\end{enumerate}
Soit $\vhi\in\mathbb{S}_{0}^{H(\mathbf{m})}$, alors le premier point
et le fait que $\mathbf{a}^{*}$ soit anisotrope montre que le support
de $\vhi(x)=0$ si $x\neq 0$, le troisi\`eme point
montre que $\vhi(0)\in\mathbb{S}_{1}\otimes\mathbb{S}_{2}^{U(\mathbf{l}_{0})}$.
On a ainsi:\[
\mathbb{S}_{0}^{H(\mathbf{m})}=\mathbb{S}_{1}\otimes\mathbb{S}_{2}^{U(\mathbf{l}_{0})}\]

On est ramen\'e \`a d\'emontrer le fait suivant:\[
\mathbb{S}_{2}^{U(\mathbf{l}_{0})}=\left\{ 0\right\} \]

Supposons dans un premier temps que $\mathbf{l}_{0}=X\oplus Y$ soit
d\'eploy\'e, alors on peut utiliser pour $\mathbb{S}_{2}$ le mod\`ele de
Schr\"odinger $S(X\otimes\mathbf{a}^{*})$. Soit $\vhi$ un vecteur
de $\mathbb{S}_{2}$ invariant par $U(\mathbf{l}_{0})$. L'action
du radical unipotent du parabolique de Siegel associ\'e \`a $X$ est donn\'e
par la m\^eme formule que le point $1$ pr\'ec\'edent. On en d\'eduit donc
que le support de $\vhi$ est concentr\'e en $0$, mais un certain \'el\'ement
de $U(\mathbf{l}_{0})$ agit comme une transformation de Fourier.
La condition de support est donc impossible sauf si $\vhi=0$. Pour
conclure, il suffit de remarquer que dans tous les cas $\mathbf{l}_{0}$ (on
a suppos\'e que dans le cas 1 (resp. le cas 2) que $m>n$ (resp. $m>n+1$)).
\end{proof}
\section{Caract\'eristique r\'esiduelle 2}\label{finitude2}
Le but de cette partie est de d\'emontrer en crarct\'eristique r\'esiduelle $2$
la proposition \ref{Pfin}.

\subsection{Deux lemmes g\'en\'eraux.}

On se donne $W$ un espace $\epsilon$-h\'ermitien par rapport \`a une
extension $E/F$. Si $L$ est un r\'eseau de $W$, on note $L^{*}$
le r\'eseau dual d\'efinit par rapport \`a un id\'eal de valuation
$r_W$. On note $\alpha$ la valuation de $2$ dans $E$.

\begin{lemm}
\label{lem:app1}On se donne $L$ un r\'eseau de $W$ alors il existe
$d\in\N$ tel que pour tout sous-espace isotrope $X\subset W$, on
a une d\'ecomposition de Witt de $W$, $W=X\oplus W_{0}\oplus Y$ de
sorte que:

\[
L\subset\varpi^{-d}(L\cap X\oplus L\cap W_{0}\oplus L\cap Y)\]

\end{lemm}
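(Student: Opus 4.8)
Le point essentiel est l'uniformit{\'e} de l'entier $d$: pour un sous-espace isotrope $X$ {\it fix{\'e}}, l'existence d'un tel entier est imm{\'e}diate. En effet, choisissons une d{\'e}composition de Witt quelconque $W=X\oplus W_0\oplus Y$ (il en existe, $F$ {\'e}tant de caract{\'e}ristique nulle); alors $(L\cap X)\oplus(L\cap W_0)\oplus(L\cap Y)$ est un r{\'e}seau de $W$ contenu dans $L$, donc commensurable {\`a} $L$, ce qui fournit un entier $d(X)$ convenable. Le plan est de rendre cet entier uniforme par un argument de compacit{\'e}. Comme la dimension d'un sous-espace isotrope de $W$ ne prend qu'un nombre fini de valeurs (comprises entre $0$ et l'indice de Witt de $W$), il suffira de fixer une telle dimension $i$, de produire un entier $d_i$ valable pour tous les sous-espaces isotropes de dimension $i$, puis de poser $d=\max_i d_i$.

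Pour $i$ fix{\'e}, je fixerais un sous-espace isotrope de r{\'e}f{\'e}rence $X_0$, de dimension $i$, muni d'une d{\'e}composition de Witt $W=X_0\oplus W_0\oplus Y_0$, ainsi qu'un sous-groupe compact maximal $K$ de $U(W)(F)$. L'ingr{\'e}dient principal est que $K$ agit d{\'e}j{\`a} transitivement sur l'ensemble (compact) des sous-espaces isotropes de dimension $i$ de $W$: cela r{\'e}sulte de la transitivit{\'e} de $U(W)(F)$ sur ces sous-espaces (th{\'e}or{\`e}me de Witt) et de la d{\'e}composition d'Iwasawa $U(W)(F)=K\cdot P$, o{\`u} $P=\text{Stab}(X_0)$ est le parabolique stabilisant $X_0$. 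Le second ingr{\'e}dient est que le stabilisateur de $L$ dans $\text{GL}(W)(F)$ est ouvert, donc $K\cap\text{Stab}(L)$ est d'indice fini dans $K$; l'ensemble $\{gL\ |\ g\in K\}$ est alors fini, disons {\'e}gal {\`a} $\{L_1,\dots,L_t\}$.

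{\`A} chaque $L_j$ j'associerais le r{\'e}seau $M_j=(L_j\cap X_0)\oplus(L_j\cap W_0)\oplus(L_j\cap Y_0)$, contenu dans $L_j$ (chaque intersection est un $\mathcal{O}$-module de type fini engendrant sur $E$ le facteur correspondant, donc un r{\'e}seau plein). Comme $M_j\subset L_j$ sont deux r{\'e}seaux de $W$, il existe $d_j\in\N$ avec $\varpi^{d_j}L_j\subset M_j$; je poserais alors $d_i=\max_j d_j$. Pour conclure, soit $X$ un sous-espace isotrope de dimension $i$; {\'e}crivons $X=kX_0$ avec $k\in K$. Comme $k$ est une isom{\'e}trie, $W=X\oplus kW_0\oplus kY_0$ est encore une d{\'e}composition de Witt, et en lui appliquant $k^{-1}$ on obtient
\[k^{-1}\big((L\cap X)\oplus(L\cap kW_0)\oplus(L\cap kY_0)\big)=(k^{-1}L\cap X_0)\oplus(k^{-1}L\cap W_0)\oplus(k^{-1}L\cap Y_0)=M_j,\]
o{\`u} $j$ est tel que $k^{-1}L=L_j$. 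De $\varpi^{d_i}L_j\subset\varpi^{d_j}L_j\subset M_j$ on tire alors, en appliquant $k$,
\[\varpi^{d_i}L\subset(L\cap X)\oplus(L\cap kW_0)\oplus(L\cap kY_0),\]
c'est-{\`a}-dire l'inclusion voulue pour la dimension $i$, d'o{\`u} le lemme en posant $d=\max_i d_i$.

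La difficult{\'e} principale me semble {\^e}tre le contr{\^o}le simultan{\'e} des r{\'e}seaux $L\cap X$ lorsque $X$ varie, ceux-ci pouvant se comporter de fa\c{c}on tr{\`e}s irr{\'e}guli{\`e}re; c'est la transitivit{\'e} du groupe {\it compact} $K$, combin{\'e}e {\`a} la finitude de l'orbite $\{gL\}$, qui ram{\`e}ne tous les cas {\`a} un nombre fini de situations. Il restera {\`a} justifier soigneusement la transitivit{\'e} de $K$ via la d{\'e}composition d'Iwasawa et le fait que les $M_j$ sont bien des r{\'e}seaux pleins, mais ces v{\'e}rifications sont de routine.
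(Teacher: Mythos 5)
Your proof is correct, but it takes a genuinely different route from the paper's. The paper argues entirely by explicit linear algebra: after fixing $c$ with $\varpi^c L^*\subset L\subset\varpi^{-c}L^*$, it applies the elementary divisor theorem to $L\cap I$ and $L^*\cap I$, extends to a basis $e_1,\dots,e_s$ of $L\cap I^\perp$, uses the duality between $L/(L\cap I^\perp)$ and $L^*\cap I$ to produce vectors $E_1,\dots,E_r$ completing a basis of $L$, and then performs two explicit upper-triangular base changes (with matrices $x=-\tfrac12\beta^{-1}Z$ and $y=-\beta^{-1}u$) whose denominators are bounded in terms of $c$; the resulting Gram matrix is block-diagonal and $d$ is read off. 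Your argument instead is a compactness/finiteness argument: you transport a fixed reference decomposition $W=X_0\oplus W_0\oplus Y_0$ by elements of a compact open subgroup $K$ acting transitively (or, what suffices, with finitely many open orbits, since the isotropic Grassmannian $G/P_i$ is compact and $K$ is open) on isotropic subspaces of a given dimension, and you exploit that the $K$-orbit $\{gL:g\in K\}$ is finite because $K\cap\mathrm{Stab}(L)$ has finite index. Both are valid. The paper's route is constructive and yields an explicit dependence of $d$ on the commensurability constant $c$ between $L$ and $L^*$; yours is softer and gives no explicit bound, but it is shorter, cleaner, avoids the Gram--Schmidt computations (in particular the division by $2$, though that is harmless here since $2$ has finite valuation), and identifies the right structural reason for uniformity. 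Two small points you should tighten when writing it up: the Iwasawa decomposition $G=KP$ requires $K$ to be a \emph{special} maximal compact, so either fix such a $K$, or replace transitivity by the observation that $K$-orbits on the compact space $G/P_i$ are open, hence finitely many, and run the argument orbit by orbit; and check that $L_j\cap X_0$ is a full lattice of $X_0$, which you note is routine (intersecting a lattice with a subspace gives a lattice of that subspace).
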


\begin{proof}

On se donne $c$ tel que:\begin{equation}
\varpi^{c}L^{*}\subset L\text{ et }\varpi^{c}L\subset L^{*}\label{eq:defc}\end{equation}
 D'apr\`es le th\'eor\`eme des diviseurs \'el\'ementaires appliqu\'es
aux deux r\'eseaux $L\cap I$ et $L^{*}\cap I$ de $I$. On a une base
$e_{1},\dots,e_{r}$ de $L\cap I$ et des entiers $\alpha_{1},\dots,\alpha_{r}$
tels que la famille $e_{1}^{*}=\varpi^{\alpha_{1}}e_{1},\dots,e_{r}^{*}=\varpi^{\alpha_{1}}e_{r}$
soit une base de $L^{*}\cap I$. On a pour tout $i$, $|\alpha_i|\leq c$
d'apr\`es $(\ref{eq:defc})$. Comme $I$ est isotrope on a: $I\subset I^{\perp}$.
On peut alors compl\'eter la base $e_{1},\dots,e_{r}$ de $L\cap I$
en une base $e_{1},\dots,e_{s}$ de $L\cap I^{\perp}$. Les r\'eseaux
$\frac{L}{L\cap I^{\perp}}$ et $L^{*}\cap I$ sont en dualit\'e, on
peut donc trouver des \'el\'ements $E_{1,},\dots,E_{r}$ de $L$ v\'erifiant:

\begin{enumerate}
\item $(E_{i,},e_{j}^{*})=\delta_{i,j}\varpi^{r_W}$
\item $e_{1},\dots,e_{s},E_{1},\dots,E_{r}$ est une base de $L$
\end{enumerate}
La matrice de la forme est alors dans la base $e_{1},\dots,e_{s},E_{1},\dots,E_{r}$:\[
\left(\begin{array}{ccc}
0 & 0 & \epsilon\ ^{t}\overline{\beta}\\
0 & w & {\epsilon\ }^{t}\overline{u}\\
\beta & u & Z\end{array}\right)\]
Les diff\'erents coefficients de la matrice sont de
valuation sup\'erieure \`a $r_w-c$.

Si on fait le changement de base suivant:\[
\left(\begin{array}{ccc}
1 &  & x\\
 & 1\\
 &  & 1\end{array}\right)\]

avec $x=-\frac{1}{2}\beta^{-1}Z$, on obtient dans la nouvelle base
comme expression pour la forme:\[
\left(\begin{array}{ccc}
0 & 0 & \epsilon\ ^{t}\overline{\beta}\\
0 & w & {\epsilon\ }^{t}\overline{u}\\
\beta & u & 0\end{array}\right)\]

Et si on fait encore le changement de base:\[
\left(\begin{array}{ccc}
1 & y\\
 & 1\\
 &  & 1\end{array}\right)\]

avec $y=-\beta^{-1}u$, on a alors comme expression pour la forme:\[
\left(\begin{array}{ccc}
0 & 0 & \epsilon\ ^{t}\overline{\beta}\\
0 & w & 0\\
\beta & 0 & 0\end{array}\right)\]
Comme les matrices $x$ et $y$ sont \`a valeurs dans $\varpi^{-c'}\mathcal{O}$
avec $c'$ qui ne d\'epend que de $L$, on en d\'eduit bien le r\'esultat.
\end{proof}

\begin{lemm}
\label{lem:app2}Soit $L$ un r\'eseau de $W$ et $a$ un entier. Il
existe un entier $b$ tel que pour tout sous $\mathcal{O}$-module
$R$ de $L$ v\'erifiant:\[
\forall r_{1},r_{2}\in R\ \langle r_{1},r_{2}\rangle\in\varpi^{b}\mathcal{O}\]
Il existe alors un sous-espace isotrope $I$ de $W$ tel que:\[
R\subset I\cap L+\varpi^{a}L\]

\end{lemm}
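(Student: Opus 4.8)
Le plan est de raisonner par l'absurde au moyen d'un argument de compacité, dans l'esprit de celui du chapitre~5 de \cite{MVW}. Supposons qu'aucun entier $b$ ne convienne : pour chaque $b\in\N$ on disposerait d'un sous-$\mathcal{O}$-module $R_b\subset L$ vérifiant $\langle R_b,R_b\rangle\subset\varpi^b\mathcal{O}$, mais tel qu'aucun sous-espace isotrope $I$ de $W$ ne satisfasse $R_b\subset(I\cap L)+\varpi^aL$. La première observation est que cette dernière condition ne dépend de $R_b$ qu'à travers son image $\overline{R_b}$ dans le groupe \emph{fini} $L/\varpi^aL$~: on a en effet $R_b\subset(I\cap L)+\varpi^aL$ si et seulement si $\overline{R_b}\subset\overline{I\cap L}$. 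Le groupe $L/\varpi^aL$ étant fini, quitte à extraire une sous-suite en $b$, je pourrai supposer que toutes les images $\overline{R_b}$ coïncident avec un même sous-module $\overline{S}\subset L/\varpi^aL$, et que $b\to+\infty$ le long de cette sous-suite.

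Je choisirai alors des générateurs $\overline{s_1},\dots,\overline{s_k}$ de $\overline{S}$ et, pour chaque $b$, des relèvements $s_i^{(b)}\in R_b$ de $\overline{s_i}$. Ces vecteurs appartiennent au réseau $S$ image réciproque de $\overline{S}$ dans $L$ (il contient $\varpi^aL$, donc est de rang plein), qui est compact ; une seconde extraction fournit donc une sous-suite le long de laquelle $s_i^{(b)}\to s_i^\infty\in S$ pour tout $i$, avec $s_i^\infty\equiv\overline{s_i}\bmod\varpi^aL$. Le point décisif est que, $b$ tendant vers l'infini, la divisibilité $\langle s_i^{(b)},s_j^{(b)}\rangle\in\varpi^b\mathcal{O}$ passe à la limite en $\langle s_i^\infty,s_j^\infty\rangle=0$ pour tous $i,j$ ; par sesquilinéarité, le sous-espace $I=\sum_i Es_i^\infty$ de $W$ est totalement isotrope. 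Comme chaque $s_i^\infty$ est dans $I\cap L$ et y relève $\overline{s_i}$, on obtient $\overline{S}\subset\overline{I\cap L}$, d'où $R_b\subset(I\cap L)+\varpi^aL$ pour tout $b$ de la sous-suite, ce qui contredit le choix des $R_b$.

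Une fois ce cadre posé la démonstration est courte ; le point sur lequel il faudra veiller est l'enchaînement des deux passages à la compacité — d'abord la finitude de $L/\varpi^aL$, qui fige l'image $\overline{S}$, puis la compacité du réseau $S$, qui produit les vecteurs limites $s_i^\infty$ — ainsi que le fait que cet argument sélectionne \emph{automatiquement} le sous-espace isotrope pertinent. Il serait en effet tentant de prendre pour $I$ le radical de l'espace engendré par $R_b$, mais cela ne suffit pas~: cet espace peut être non dégénéré tout en comportant une partie isotrope « vue » par $R_b$, et c'est précisément cette partie que les limites $s_i^\infty$ mettent au jour.
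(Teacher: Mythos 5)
Votre d\'emonstration est correcte, mais elle emprunte une route r\'eellement diff\'erente de celle du texte. La preuve du texte proc\`ede par approximations successives \`a la Newton--Hensel sur un module $R$ fix\'e~: le \emph{Fait} en est le moteur, corrigeant explicitement les g\'en\'erateurs $e_i$ \`a l'aide de la base duale (par $a_{i,j}=-\frac{\epsilon}{2}\varpi^{\alpha_i}\langle e_i,e_j\rangle$), de sorte que la divisibilit\'e passe de $\varpi^{b'}$ \`a $\varpi^{b'+1}$ tandis que les vecteurs ne bougent que dans $\varpi^{a}L$; le terme d'erreur \'etant quadratique, l'it\'eration converge et l'on prend pour $I$ l'espace engendr\'e par les limites. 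Cela fournit une valeur explicite $b=2(a+\alpha)+c+1$, o\`u $\alpha$ est la valuation de $2$ et $c$ mesure l'\'ecart entre $L$ et $L^{*}$. Vous contournez enti\`erement cette \'etape de correction par un argument de compacit\'e et de contradiction~: la finitude de $L/\varpi^{a}L$ fige la classe $\overline{S}$ le long d'une sous-suite de contre-exemples $R_{b}$ (votre \'equivalence entre $R_{b}\subset (I\cap L)+\varpi^{a}L$ et $\overline{R_{b}}\subset\overline{I\cap L}$ est bien le point cl\'e qui autorise ce passage au quotient fini), la compacit\'e du r\'eseau produit les vecteurs limites $s_{i}^{\infty}$, et la continuit\'e de la forme, jointe \`a $\langle R_{b},R_{b}\rangle\subset\varpi^{b}\mathcal{O}$ avec $b\to\infty$, donne l'isotropie de leur engendr\'e. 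Les deux preuves se rejoignent dans leur derni\`ere \'etape (extraction de sous-suites convergentes dans le r\'eseau compact et d\'efinition de $I$ comme espace engendr\'e par les limites), mais la v\^otre \'evite tout le calcul du \emph{Fait}; elle est plus courte et plus souple, au prix de l'effectivit\'e~: aucune borne sur $b$ n'en ressort. Pour l'usage qui en est fait ensuite (seule l'existence d'un $t_{0}$ importe dans la proposition de finitude), cela suffit.
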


\begin{proof}

\begin{Fait}
Pour tout $a\in\mathbb{N}$, il existe $b\in\mathbb{N}$ tel que pour
tout $b'\geq b$ et tout vecteur $e_{1},\dots,e_{r}$ de $L$ lin\'eairement
ind\'ependant dans $L/\varpi L$ et $0\leq \alpha_1,\dots,\alpha_r\leq a$

v\'erifiant:\[
\left\langle \varpi^{\alpha_i} e_{i},\varpi^{\alpha_j}e_{j}\right\rangle \in\varpi^{b'}\mathcal{O}\]
Alors il existe des vecteurs $e_{1}',\dots,e'_{r}$ v\'erifiant:

\begin{enumerate}
\item Pour tout $i\in\left\{ 1,\dots,r\right\} $ $e_{i}-e_{i}'\in\varpi^{a}L$
\item $\left\langle \varpi^{\alpha_i} e_{i}',\varpi^{\alpha_j} e_{j}'\right\rangle \in\varpi^{b'+1}\mathcal{O}$
\end{enumerate}
\end{Fait}
Montrons que le fait suffit pour conclure. Soit $e_{1},\dots,e_{n}$ une base
de $L$ tel que: ($\varpi^{\alpha_{1}}e_{1},\dots,\varpi^{\alpha_{r}}e_{r}$)
soit une base de $R$. Il est clair que l'on peut supposer que pour
tout $i$ , $\alpha_{i}\leq a$. On a alors:\[
\left\langle e_{i},e_{j}\right\rangle \in\varpi^{b'-\alpha_{i}-\alpha_{j}}\]
on construit alors par r\'ecurrence des suites $e_{i}(k)$
de vecteurs de $L$ v\'erifiant:
\begin{enumerate}
\item Pour tout $i\in\left\{ 1,\dots,r\right\} $ $e_{i}-e_{i}(k)\in\varpi^{a}L$
\item $\left\langle \varpi^{\alpha_i}e_{i}(k),\varpi^{\alpha_j}e_{j}(k)\right\rangle \in\varpi^{b+k}\mathcal{O}$
\end{enumerate}
En extrayant des sous-suites convergeantes des suites $e_{i}(k)$,
on peut choisir pour $I$ le sous-espace engendr\'e par les limites qui est
istrope et v\'erifie la relation voulue.$\boxtimes$

Il nous reste \`a d\'emontrer le fait. On a:\[
\left\langle e_{i},e_{j}\right\rangle \in\varpi^{b'-\alpha_{i}-\alpha_{j}}\]
Soit $e_{1}^{*},\dots,e_{n}^{*}$ la base duale de la base $e_{1},\dots,e_{n}$.
On cherche $e'_{i}$ sous la forme suivante:\[
\varpi^{\alpha_i}e_{i}'=\varpi^{\alpha_{i}}e_{i}+\sum_{j=1}^{r}a_{i,j}e_{j}^{*}\]
avec $a_{i,j}\in\varpi^{a+c}\mathcal{O}$. On aura alors bien $(1)$. On veut de plus que l'\'equation
suivante soit v\'erifi\'ee:\[
\left\langle \varpi^{\alpha_i}e_{i}',\varpi^{\alpha_k}e_{k}'\right\rangle \in\varpi^{b'+1}\mathcal{O}\]

Or:\[
\left\langle \varpi^{\alpha_i} e_{i}', \varpi^{\alpha_k}e_{k}'\right\rangle =\varpi^{\alpha_{i}}\overline{\varpi}^{\alpha_{k}}\left\langle e_{i},e_{k}\right\rangle +\varpi^{\alpha_{i}}\overline{a_{k,i}}+\overline{\varpi}^{\alpha_{k}}\epsilon a_{i,k}+\sum_{j,l}a_{i,j}\overline{a_{k,l}}\left\langle e_{i}^{*},e_{j}^{*}\right\rangle \]

Nous posons $a_{i,j}=-\frac{\epsilon}{2}\varpi^{\alpha_{i}}\left\langle e_{i},e_{j}\right\rangle \in\varpi^{b'-\alpha_{j}-\alpha}$,
cela \'elimine les trois premiers termes et on a:\[
\left\langle \varpi^{\alpha_i}e_{i}',\varpi^{\alpha_k}e_{k}'\right\rangle =\sum_{j,l}a_{i,j}\overline{a_{k,l}}\left\langle e_{i}^{*},e_{j}^{*}\right\rangle \]

Cet \'el\'ement appartient \`a $\varpi^{2b'-2(a+\alpha)-c}\mathcal{O}$
et donc que si on pose $b=2(a+\alpha)+c+1$, on obtient le fait.
\end{proof}

\subsection{D\'emonstration du r\'esultat de finitude}

Nous reprennons les notations de la partie 2.3. Soit $r'$ l'entier
tel que $\varpi_{F}^{r'}$ soit le conducteur de $\psi$ et de m\^eme
$r$ l'indice du conducteur de $\psi\circ\tr$. On consid\`ere toujours les inclusions
$A\subset L\subset B$. Comme $L$ est un r\'eseau sympl\'ectique autodual
de $\mathbf{W}$. Il existe une d\'ecomposition de Witt $X\oplus Y$
de $\mathbf{W}$ telle que $L=X\cap L\oplus Y\cap L$. On d\'efinit
si $l=x+y\in L$:\[
\zeta(x+y)=\langle u,v\rangle\]

Pour tout $a\in L$, on a: $\zeta(a+a')-\zeta(a)-\zeta(a')=\langle a,a'\rangle[2\om_{F}^{r'}]$.
On en d\'eduit donc que l'espace suivant:\[
S(\mathbf{W}//L)=\left\{ \vhi:\mathbb{\mathbf{W}}\rightarrow\C|\ \vhi(a+w)=\psi(\frac{1}{2}\zeta(a)+\frac{1}{2}B(w,a))\vhi(w)\text{ }\forall a\in L\right\} \]

est un mod\`ele de la repr\'esentation de Weil muni de l'action de $H(\mathbf{W})$
donn\'ee par:\[
\rho(w',t)\vhi(w)=\psi(\frac{1}{2}B(w,w')+t)\vhi(w+w')\]

Pour tout $w\in\mathbf{W}$, on note $s_{w}$ l'unique fonction \`a
support dans $w+L$ telle que $s_{w}(w)=1$. Pour tout $g\in U(\mathbf{W})$
la fonction $\om(g)s_{w}$ est \`a support dans:\begin{eqnarray}
 & g(w+L)+\frac{1}{2}(gL+L)\label{eq:support}\end{eqnarray}

On cherche \`a d\'ecrire l'action des sous-groupes de congruence de $U(W)$.
Introduisons:\[
K^{1}=\left\{ g\in K_{V}(L_{V})|\ (g-1)B\subset2A\right\} \]
Un \'el\'ement de $k\in K^{1}$ v\'erifie\[
\left\{ \begin{array}{c}
kL=L\text{ et, }\\
\zeta(k^{-1}a)=\zeta(a)[2]\text{ pour tout }a\in L\end{array}\right.\]

L'action du groupe $K^{1}$ sur l'espace $S(\mathbf{W}//L)$ est alors
donn\'ee comme dans le cas de caract\'eristique impair par:\[
\om(k)\vhi(x)=\vhi(k^{-1}x)\]
Remarquons que $K(\varpi^{k}L_V)\subset K^1$ si $k\geq \alpha+1$. Soit $k\geq\alpha+2$, nous notons $K=K(\varpi^{2k}L_{V})$.
Soit $t\geq0$, notons $S[t]$ les fonctions \`a support dans $\varpi^{-(t+k)}L_{W}\otimes L_{V}^{*}$.
$S[t]$ est stable par l'action de $K$. 
\begin{prop} Il existe $t_0\geq 0$ tel que:
\[S^{K}\subset\mathcal{H}_{W}S[t_0]\]
\end{prop}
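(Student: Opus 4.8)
The plan is to adapt the proof of Theorem \ref{thmW1} (the analogue in odd residual characteristic, due to Waldspurger in \cite{MVW}) to the present setting, where the only essential modification is that the action of the congruence subgroups on the lattice model is twisted by the quadratic form $\zeta$ modulo $2$. The overall strategy is the usual one: show that the subspace $S^K$ of $K$-invariant vectors in the Weil representation is generated, under the action of the Hecke algebra $\mathcal{H}_W$ of the \emph{smaller} group $U(W)$, by vectors supported in a fixed (large but explicit) lattice of $\mathbf{W}$, namely $S[t_0]$ for a suitable $t_0$. The point is that $K = K(\varpi^{2k}L_V)$ contains $K^1$ (for $k \geq \alpha+2$), so the action of $K$ on the lattice model $S(\mathbf{W}//L)$ is still simply $\om(k)\vhi(x) = \vhi(k^{-1}x)$, i.e.\ it is literally the geometric action on supports; this is what makes the argument go through despite the characteristic-$2$ complications.

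The key steps, in order, are as follows. First I would reduce to controlling, for a single vector $s_w$ of the lattice model, the support of $\om(g)s_w$ for $g \in U(\mathbf{W})$; formula (\ref{eq:support}) gives that this support sits inside $g(w+L) + \tfrac12(gL+L)$. Next, apply Lemma \ref{lem:app1} to the lattice $L_W$ (or rather to the relevant lattice in $W$): there is a $d$ depending only on $L$ such that every isotropic subspace $X \subset W$ fits into a Witt decomposition $W = X \oplus W_0 \oplus Y$ compatible with $L$ up to a bounded power $\varpi^{-d}$. Then, given a vector $\vhi \in S^K$, consider the $\mathcal{O}$-module $R$ generated by the translates of the support of $\vhi$ under $K$; the $K$-invariance forces the values of $\langle\cdot,\cdot\rangle$ on $R$ to be highly divisible (the obstruction to invariance is measured by $\zeta$ and $B$, which are controlled by $\varpi^{2k}$), so Lemma \ref{lem:app2} applies and gives an isotropic subspace $I$ with $R \subset I \cap L + \varpi^a L$. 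Combining this with the Witt decomposition from Lemma \ref{lem:app1} and acting by a suitable element of $U(W)$ (hence of $\mathcal{H}_W$, via the first point of Lemma \ref{action0}, $p(\pi(h)f\otimes\vhi) = p(f\otimes\om(\widehat h)\vhi)$), one can move the support of $\vhi$ into the standard lattice $\varpi^{-(t_0+k)}L_W \otimes L_V^*$ for an explicit $t_0$, which is exactly the assertion $S^K \subset \mathcal{H}_W S[t_0]$.

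The main obstacle I expect is the bookkeeping around the quadratic refinement $\zeta$: in odd residual characteristic the condition for $K(\varpi^{2k})$ to act by the geometric formula is simply $B((k^{-1}-1)w,w) \in \mathfrak{f}_\psi$, whereas here one must also track the congruence $\zeta(k^{-1}a) \equiv \zeta(a) \pmod 2$, which is why the bound jumps from $\varpi^k$ to $\varpi^{2k}$ and why one needs $k \geq \alpha+2$. Concretely, the delicate point is to verify that the divisibility hypothesis of Lemma \ref{lem:app2} is genuinely satisfied by the module $R$ attached to a $K$-invariant function — i.e.\ that $K$-invariance really does force $\langle r_1, r_2\rangle \in \varpi^b\mathcal{O}$ with $b$ as large as we need — and this requires carefully unwinding the formula $\rho(w',t)\vhi(w) = \psi(\tfrac12 B(w,w') + t)\vhi(w+w')$ together with the defining relation of $S(\mathbf{W}//L)$ for the twisted model. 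Once this divisibility is established, the rest is a matter of assembling Lemmas \ref{lem:app1} and \ref{lem:app2} with the Hecke-algebra transport, and the value of $t_0$ comes out explicitly in terms of $d$, $a$, $\alpha$, $r$, and the invariant $c$ of the lattice $L$.
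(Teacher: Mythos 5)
Votre architecture globale est la bonne — réduction au contrôle du support des $\om(k)s_{w}$, lemmes \ref{lem:app1} et \ref{lem:app2}, puis transport par un élément de $U(W)$ agissant via $\mathcal{H}_{W}$ — mais il manque le c\oe{}ur de l'argument, à deux endroits précis. D'abord, le module $R$ auquel on applique le lemme \ref{lem:app2} n'est pas \og{}le $\mathcal{O}$-module engendré par les translatés du support de $\vhi$ sous $K$\fg{}: ce dernier vit dans $\mathbf{W}=W\otimes V$, alors que le lemme \ref{lem:app2} porte sur un sous-module d'un espace $\epsilon$-hermitien $W$ muni de $\langle\,,\rangle$. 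L'objet correct est l'image $w(\varpi^{t+k}L_{V})\subset L_{W}$ du réseau $\varpi^{t+k}L_{V}$ par $w$ vu comme élément de $\hom(V,W)$ via l'isomorphisme $\lambda$ de l'équation (\ref{eq:iso2}); sans cette identification $V\otimes W\simeq\hom(V,W)$, l'énoncé de divisibilité que vous voulez vérifier n'a même pas de sens dans le bon espace.

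Ensuite, le point que vous signalez vous-même comme \og{}délicat\fg{} — montrer que la $K$-invariance force $\langle r_{1},r_{2}\rangle\in\varpi^{b}\mathcal{O}$ sur $R$ — n'est pas un simple déroulement de la formule $\rho(w',t)\vhi(w)=\psi(\frac{1}{2}B(w,w')+t)\vhi(w+w')$: il exige la construction explicite, à la Waldspurger, d'éléments du stabilisateur $K_{w}=\{k\in K\mid (k^{-1}-1)w\subset 2L\}$, à savoir les transformées de Cayley $\gamma(c_{x,y})=(1-c_{x,y})(1+c_{x,y})^{-1}$ des endomorphismes hermitiens de rang deux $c_{x,y}(v)=x(y,v)-\epsilon y(x,v)$ avec $x\in\varpi^{t+k}L_{V}$ et $y\in\varpi^{k}L_{V}$. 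Le calcul montre que $\gamma(c_{x,y})$ agit sur $s_{w}$ par le caractère $\psi\left(\tr(wx,wy)_{W}\right)$, dont la trivialité (nécessaire pour que $\int_{K}\om(k)s_{w}\,dk\neq 0$) est exactement l'hypothèse de divisibilité $\langle w(\varpi^{t+k}L_{V}),w(\varpi^{t+k}L_{V})\rangle\subset\varpi^{t+r}\mathcal{O}_{E}$ demandée par le lemme \ref{lem:app2}. Enfin, la conclusion n'est pas un déplacement en un coup du support dans $S[t_{0}]$: l'élément $g=\varpi\,\text{id}_{I}\oplus\text{id}_{W_{0}}\oplus\varpi^{-1}\text{id}_{J}$ fourni par les deux lemmes ne fait gagner qu'un cran, $g\circ w(\varpi^{t+k}L_{V})\subset\varpi L_{W}$, d'où une récurrence descendante $S[t]^{K}\subset\mathcal{H}_{W}S[t-1]$ pour $t\geq t_{0}+1$, qu'il faut itérer. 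Sans ces trois ingrédients, la preuve ne se ferme pas.
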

\begin{proof}
Il suffit de trouver un certain $t_0$, tel que pour tout $t\geq
t_{0}+1$:
\[S[t]^{K}\subset\mathcal{H}_{W}S[t-1].\]
Nous consid\'erons un vecteur \[
\vhi\in{S[t]}^{K}\]
$\vhi$ est alors combinaison lin\'eaire de vecteurs de la forme $\int_{K}\om(k)s_{w}dk$. On peut
donc supposer que \[
\vhi=\int_{K}\om(k)s_{w}dk\]

avec $w\in\varpi^{-(t+k)}L_{W}\otimes L_{V}^{*}$.

Posons:\[
K_{w}=\left\{ k\in K|\ (k^{-1}-1)w\subset2L\right\} \]

On remarque que $K_{w}$ est un sous-groupe de $K$ tel que si $k\in K_{w}$:\[
\om(k)s_{w}=\psi\left(\frac{1}{2}B(w,k^{-1}w-w)\right)s_{w}\]

Pour que le vecteur $\vhi$ soit non nul le caract\`ere pr\'ec\'edent doit-\^etre
trivial. On a donc pour tout $k\in K_{w}$:

\[
\psi\left(\frac{1}{2}B(w,k^{-1}w-w)\right)=1\]
 On va en suivant Waldspurger (\cite{Wald0},\cite{Wald}) construire des \'el\'ements de $K_{w}$.
Notons $\text{Herm}(V)$ l'ensemble des endomorphismes de V v\'erifiant:\[
(cv,v')+(v,cv')=0\ \forall v,v'\in V\]

Si les \'el\'ements $1\pm c$ sont inversibles l'automorphisme $\gamma(c)=(1-c)(1+c)^{-1}$
est dans $U(V)$. Pour tout $x,y\in V$. On construit un \'el\'ement $c_{x,y}\in\text{Herm}(V)$
d\'efinit par:\[
c_{x,y}(w)=x(y,w)-\epsilon y(x,w)\]

\begin{lemm}Supposons $t\geq1$, $x\in\varpi^{t+k}L_{V}$ et $y\in\varpi^{k}L_{V}$. Alors l'\'el\'ement
$\gamma(c_{x,y})$ est bien d\'efini et:
\begin{itemize}
\item $\gamma(c_{x,y})$ est dans $K_{w}$
\item $\psi(\frac{1}{2}B(({\gamma(c)}_{x,y}^{-1}-1)w,w))=\psi(B(w,c_{x,y}w))=\psi\left(\tr(wx,wy)_{W}\right)$
\end{itemize}
O\`u on a identifi\'e $V\otimes W$ \`a $\hom(W,V)$ par l'isomorphisme
$\lambda$ suivant:\begin{equation}
\begin{cases}
\begin{array}{ccc}
V\otimes W & \rightarrow^{\lambda} & \hom(V,W)\\
v\otimes w & \mapsto & (v'\mapsto(v',v)w)\end{array}\end{cases}\label{eq:iso2}\end{equation}

\end{lemm}
\begin{proof}

Posons $c=c_{x,y}$. Sous les hypoth\`eses, il est clair que $\gamma(c)$
est bien d\'efini et que le point $(1)$ est vrai, on a de plus le d\'eveloppement:\[
\gamma(c)^{-1}=1+2\sum_{n\geq1}c^{n}\]

On a donc:\begin{eqnarray*}
\psi(\frac{\langle({\gamma(c)}^{-1}-1)w,w\rangle}{2}) & = & \prod_{n\geq1}\psi(\left\langle c^{n}w,w\right\rangle )\\
\\\end{eqnarray*}

Mais d'apr\`es la d\'efinition de $c$, on a si $n\geq2$:\[
\psi(\left\langle c^{n}w,w\right\rangle )=\psi(\left\langle c^{n-1}w,cw\right\rangle )=1\]
vu que $c^{n-1}w\in L$ et $cw\in L$. On a donc bien le premier
point. Le reste est clair. 
\end{proof}

Remarquons que par l'isomorphisme $\lambda$, on
a:\[
\begin{cases}\label{action}
\lambda(gw)=g\circ\lambda(w)\ \forall g\in U(W)\text{ et,}\\
L_{W}\otimes L_{V}^{*}=\hom(L_{V},L_{W})\end{cases}\]

On a d'apr\`es le lemme pr\'ec\'edent:\[
\langle w(\varpi^{t+k}L_{V}),w(\varpi^{t+k}L_{V})\rangle\subset\varpi^{t+r}\mathcal{O}_{E}\]

Soit $d$ v\'erifiant les hypoth\`eses du lemme \ref{lem:app1} pour
le r\'eseau $L_{W}$ et posons $a=d+2$. Le lemme \ref{lem:app2}
nous donne un nombre $b$ par rapport \`a ce choix de $a$. On suppose que $t_0+r\geq b+1$. On applique
ce lemme et on obtient un sous-espace isotrope $I$ de $W$ tel que :\begin{equation}
w(\varpi^{t+k}L_{V})\subset I\cap L_{W}+\varpi^{d+2}L_{W}\label{eq:rela1}\end{equation}

Appliquons le lemme $(\ref{lem:app2})$ , on obtient une d\'ecomposition
de Witt de $W$ de la forme $W=I\oplus W_{0}\oplus J$ v\'erifiant:\begin{equation}
L_{W}\subset\varpi^{-d}(I\cap L_{W}\oplus W_{0}\cap L_{W}\oplus J\cap L_{W})\label{eq:rela2}\end{equation}

Posons $g=\varpi\text{id}_{I}\oplus\text{id}_{W_{0}}\oplus\varpi^{-1}\text{id}_{J}$.
On v\'erifie \`a l'aide des relations $(\ref{eq:rela1})$ et $(\ref{eq:rela2})$
que:\[
g\circ w(\varpi^{t+k}L_{V})\subset\varpi L_{W}\]

C'est \`a dire que $\omega(g)\vhi$ est \`a support dans $S[t-1]$, d'apr\`es
(\ref{eq:support}) et le lemme \ref{action}. On
en d\'eduit le r\'esultat voulu par r\'ecurrence.
\end{proof}


\begin{thebibliography}{999}
\bibitem[Berg]{B1} Nicolas Bergeron. Sur la cohomologie et le spectre
  des vari{\'e}t{\'e}s localement sym{\'e}triques. Ensaios Matematicos,
  11. \emph{Sociedade Brasileira de Matematica, Rio de Janeiro}, 2006.
\bibitem[Berg]{B2} Nicolas Bergeron, Propri\'et\'es de Lefschetz automorphes
pour les groupes unitaires et orthogonaux. \emph{M\'em. Soc. Math. Fr. (N.S)}
N\`a. 106 (2006).
\bibitem[Casselman]{Casselman} Introduction to the theory of admissible
representations of $p$-adic groups, notes.


\bibitem[Clo]{Clo} On limit multiplicities of discrete series
  representations in spaces of cusp forms. \emph{Invent. Math.} 83
  (1986), no.2, 265--284.

\bibitem[DGW]{DGW} de George, David L.; Wallach, Nolan R. Limit formulas for multiplicities in $L\sp{2}(\Gamma \backslash G)$.  Ann. Math. (2)  107  (1978), no. 1, 133--150.

\bibitem[GPSR]{GPSR} Stephen Gelbart, Ilya Piatetski-Shapiro et
  Stephen Rallis. Explicit constructions of automorphic
  L-functions. Lecture notes in Mathematics, 1254.\emph{
    Springer-Verlag,Berlin,} 1987. 
\bibitem[Howe]{Howe} Roger Howe, $\theta$-series and invariant
  theory. \emph{Automorphic forms, representations and $L$-functions,
    Part 1} pp. 275--285, Proc. sympos. Pure Math.,
  XXXIII,\emph{Amer.Math.Soc.,Providence}, 1979.
\bibitem[Kneser]{Kneser}Martin Kneser, Strong
  approximation. \emph{Algebraic Groups and Discontinuous Subgroups}
  pp 187--196, \emph{Amer. Math. Soc., Providence} 1966.
\bibitem[K]{K} Stephen S. Kudla, Splitting metaplectic covers of dual
  reductive pairs. \emph{Israel J. Math.} 87 (1994), no. 1-3, 361--401.
\bibitem[KR1]{KR0} Stephen S. Kudla et Stephen Rallis, Regularized
  Siegel-Weil formula: the first term identity. \emph{Ann. of
    Math. (2)} 140 (1994), no.1, 1--80.
\bibitem[KR2]{KR} Stephen S. Kudla et Stephen Rallis. On first
  occurrence in the local theta correspondence. \emph{Automorphic
    representations, $L$-functions and applications: progress and
    prospects}, 273--308, Ohio State Univ. Math. Res. Inst. Publ., 11.
\bibitem[Li1]{Li1} Jian-Shu Li, Theta lifting for unitary
  representations with nonzero cohomology. \emph{Duke Math. J. }61
  (1990), n{\`a}.3, 913--937.
\bibitem[Li2]{Li2} Jian-Shu Li. Nonvanishing theorems for the
  cohomology of certain arithmetic quotients. \emph{J. Reine
    Angew. Math.} 428 (1992), 177--217.
\bibitem[Li3]{Li3} Jian-Shu Li. Singular unitary representations of
  classical groups. \emph{Invent. Math.} 97 (1989), no.2, 237-255.
\bibitem[Li4]{Li4} Jian-Shu Li, On the dimensions of spaces of siegel
  modular forms of weight one. \emph{Geom. Funct. Anal.} 6 (1996),
  no.3, 521--555.
\bibitem[Li5]{Li5}Li, Jian-Shu Automorphic forms with degenerate Fourier coefficients.  Amer. J. Math.  119  (1997),  no. 3, 523--578.
\bibitem[LR]{LR} Erez M. Lapid et Stephen Rallis, On the local factors
  of representations of classical groups, \emph{Automorphic
    representations, $L$-functions and applications: progress and
    prospects}, 309--359, Ohio State Univ. Math. Res. Inst. Publ.,
  11, 2005.
\bibitem[Min]{Min} Alberto Minguez, Correspondance de Howe $l$-modulaire:
paires duales de type II. Orsay thesis (2006).
\bibitem[MVW]{MVW} Colette Moeglin, Marie-France Vigneras et Jean-Loup
  Waldspurger, Correspondances de Howe sur un corps
  $p$-adique. Lectures Notes in Mathematics, 1291.\emph{ Springer-Verlag},1987.
\bibitem[Pan1]{Pan1} Shu-Yen Pan, Splittings of the metaplectic covers of
some reductive dual pairs, \emph{Pacific J. Math.} 199 (2001), no.1, 163--226.
\bibitem[Pan2]{Pan2} Shu-Yen Pan, Depth preservation in local theta
correpsondance. \emph{Duke Math J.} 113 (2002), no.3, 531--592.
\bibitem[SX]{XS} Peter Sarnak et Xiao Xi Xue, Bounds for
  multiplicities of automorphic representations. \emph{Duke Math J.}
  64 (1991), no.1, 207--227.
\bibitem[Sa]{Sa} Gordan Savin. Limit multiplicities of cusp
  forms. \emph{Invent. Math.} 95 (1989), no.1, 149--159.
\bibitem[VZ]{VZ} Vogan et Zuckerman, Unitary representations with
  nonzero cohomology, \emph{Comositio Math.} 53 (1984), no.1, 51--90.
\bibitem[Xue]{X} Xiao Xi Xue, On the Betti numbers of a hyperbolic
  manifolds. \emph{Geom. Funct. Anal. 2} (1992), no.1, 126--136.
\bibitem[Wald1]{Wald0} Jean-Loup Waldspurger, Sur les valeurs de
  certaines fonctions $L$ automorphes en leur centre de
  sym{\'e}trie. \emph{Compositio Math.} 54 (1985), no.2, 173--242.
\bibitem[Wald2]{Wald} Waldspurger, D{\'e}monstration d'une conjecture de
  dualit{\'e} dans le cas $p$-adique $p\not=2$. \emph{Festschrift in honor
  of I. I. Piatetski-Shapiro on the occasion of his sixtieth birthday,
Part I}, 267--324, Israel Math. Conf. Proc., 2, \emph{Weizmann,
Jerusalem}, 1990.
\bibitem[Weil]{Weil}Weil Andr\'e, Sur la formule de Siegel dans la th\'eorie des groupes classiques.  Acta Math.  113  1965 1--87.
\end{thebibliography}
\end{document}